\theoremstyle{plain}
\newtheorem{thm}{Theorem}[] 
\newtheorem{cor}[]{Corollary}
\newtheorem{lem}[]{Lemma}
\theoremstyle{definition}
\newtheorem{definition}[]{Definition} 
\newtheorem{assumption}[]{Assumption}
\newtheorem{remark}[]{Remark}
\newcommand{\convdistr}{\stackrel{\mathcal{D}}{\rightarrow}}
\title{
Exact asymptotics
of ruin probabilities with linear
Hawkes arrivals}
\date{\today}
\author{Zbigniew Palmowski}
\address{
		Zbigniew Palmowski\\
        Wrocław University of Science and Technology\\
        Wyb. Wyspia\'{n}skiego 27\\
        50-370 Wroc\l{}aw\\
        Poland
}
\email{zbigniew.palmowski@pwr.edu.pl}
\author{Simon Pojer}
\address{Simon Pojer\\
      Institute of Statistics\\
      University of Technology Graz\\
      Kopernikusgasse 24/III, 8010 Graz\\
      Austria
}
\email{ simon.pojer@tugraz.at}
\author{Stefan Thonhauser}
\address{Stefan Thonhauser\\
  Institute of Statistics\\
      University of Technology Graz\\
      Kopernikusgasse 24/III, 8010 Graz\\
      Austria
}
\email{stefan.thonhauser@tugraz.at}
\begin{document}
\selectlanguage{english}

\begin{abstract}
In this paper we determine bounds and exact asymptotics of the ruin probability
for risk process with arrivals given by a linear marked Hawkes process.
We consider the light-tailed and heavy-tailed case of the claim sizes. Main technique
is based on the principle of one big jump, exponential change of measure, and renewal arguments.

\vspace{3mm}

\noindent {\sc Keywords.} Hawkes process $\star$ ruin probability $\star$ Cram\'er asymptotics $\star$ subexponential asymptotics
\end{abstract}

\maketitle

\pagestyle{myheadings} \markboth{\sc Z.\ Palmowski  --- S.\ Pojer --- S. Thonhauser}
{\sc
Ruin probabilities with Hawkes arrivals}

\section{Introduction}

In this paper we consider the probability of ruin for a risk process
\begin{equation}\label{CLprocess} X_t := u+ct-\sum_{i=1}^{N_t}U_i,\end{equation}
where $u$ is a positive initial capital and $c$ the premium rate.
In addition, claim sizes $\{ U_i \}_{i\in \mathbb N}$ are assumed to be i.i.d. with cumulative distribution function $F_U$,
positive, and independent of the arrival process $\{N_t\}_{t\geq 0}$. We assume that arrivals are determined by a Hawkes process $\{N_t\}_{t\geq 0}$.
We recall that it means that $\{N_t\}_{t\geq 0}$ is a simple point process with intensity process
\[ \lambda_t := a + \sum_{T_i\leq t} h(t-T_i, Y_i),\]
where $a$ is constant baseline intensity, $\{Y_i\}_{i\in \mathbb{N}}$ is a sequence of positive i.i.d. random variables called often shocks,
$\{T_i\}_{i\in \mathbb{N}}$ are arrival times of the points in $\{ N_t \}_{t \geq 0}$
and
$h: \mathbb{R}_{\geq 0} \times \mathbb{R}_{\geq 0} \to \mathbb{R}_{\geq 0}$ is a decay function satisfying
\begin{equation}\label{assh}
 \mathbb{E}\left[ \int_0^\infty h(t,Y) \mathrm{d}t\right]=:\mu <1\quad \text{and}\quad \mathbb{E}\left[ \int_0^\infty t\,h(t,Y) \mathrm{d}t\right]<+\infty.\end{equation}
We assume that all considered stochastic quantities are defined
on some filtered probability space $(\Omega, \mathcal{F}, \{\mathcal{F}_t\}_{t\geq 0} , \mathbb{P})$
satisfying the usual conditions. To stress the dependence of the distribution of certain stochastic objects on the initial capital $u$, we write $\mathbb P_u$ for the measure $\mathbb P$ with condition $X_0=u$, and $\mathbb E_u \left[\cdot\right] $ for the corresponding expectation.

Introducing the Hawkes arrival process into risk theory is important  due to its
a self-exciting structure in which recent events affect the intensity of claim
occurrences more than distant ones.
In other words, every jump of the Hawkes process increases
its intensity; hence, the probability of further jumps.
In fact a Hawkes process has another crucial interpretation, namely,
the cluster of claims come to the insurer according to a homogeneous Poisson process $\{\tilde{N}_t\}_{t\geq 0}$
with intensity $a$ and aforementioned finite (thanks to assumption \eqref{assh}) cluster is produced according to a Galton-Watson branching process
with offspring distribution whose mean is $\mu<1$.
In other words, one 'claim' event that arrives according to the Poisson process $\{\tilde{N}_t\}_{t\geq 0}$
produces a chain of smaller claims that are reported to insurance company in time.

The ruin probability lies in the center of risk theory since the classical Cram\'er-Lundberg model, introduced in 1903 by Filip Lundberg \cite{lund} and then generalised in the 1930's  by Harald Cram\'er (see \cite{grenander95}).
It equals the probability that the reserves of the company will ever go below zero, that is,
\[\psi(u):= \mathbb{P}_u(\tau<+\infty)\]
for the ruin time
\begin{equation}\label{ruintime}
\tau:= \inf\{t\geq 0: X_t<0\}.
\end{equation}
Usually (and in this paper) the so-called net profit condition is assumed, that is,
\begin{equation}\label{net}
\lim_{t\to+\infty}X_t=+\infty \qquad {\rm a.s.},\end{equation}
hence $\psi(u)<1$.
Ruin is a technical term - it does not mean that the company goes bankrupt at the ruin time. In fact, downward crossing of some fixed level (that could be treated as level zero)
by reserves of the insurance company forces this company to do some actions to make the business profitable. Estimation of the ruin probability is also required for solvency purposes and it can be used,
after setting this ruin probability to an acceptably low level, to calculate the premium that should be charged.

As we said, in most of the cases only a bound or an estimate (for large values of initial reserves) is sufficient in daily practice. This is a main goal of this paper.

At the beginning we compare our process to the modified one by shifting in time all cluster claims to 'cluster' events.
This produces two  statements. After light-trailed assumptions specified later we prove in Corollary \ref{corbounds} that
there exist positive constants $C_-$, $C_+$ and an adjustment coefficient $R>0$ such that
\[C_-e^{-Ru} \leq \psi(u) \leq C_+ e^{-Ru}.\]
Similar result was presented in \cite{Albrecher.2006}
for the  Cox claim arrival process with a Poisson shot
noise intensity. Their proof has the following flaw though.
In this paper it is assumed incorrectly that the batch claim producing a ruin event is always the “last one”. In other words, every batch claim (i.e. cluster) consists of several single claims which appear delayed in time. \cite{Albrecher.2006} assumed that if we consider all claims of all clusters which appear until the ruin time of the Cram\'er-Lundberg process,
the last singular claims (last in terms of real time) belongs always to the "last" (hence the same) cluster.
This is not correct since this would mean that it can not happen that e.g. the second cluster has a claim which occurs after the third cluster
has appeared. We bypassed this problem by considering the maximum of the length of all clusters which appeared until ruin of the Cram\'er-Lundberg
process.


Under complementary assumption that claim sizes are strongly subexponential (hence heavy-tailed) we prove in Corollary \ref{aymptoticssubexponential} that
\[\lim_{u \to \infty} \frac{\psi(u)}{1-F^s_{U}(u)}=C_h\]
for a constant $C_h$ identified explicitly, where
\begin{equation}\label{residual}
F^s_{ U}(u):=\frac{1}{\mathbb{E}\left[U_1\right]} \int_0^u (1-F_U(y)) \mathrm{d}y
\end{equation}
is the integrated tail distribution.
We used here the principle of one big jump as well.
The above results appeared in Proposition 13 of \cite{MR3130449}.
Our proof is different though not
requiring tedious checking of Assumptions 1 of
\cite{MR3130449}.

To be able to find the exact asymptotics of the ruin probability $\psi(u)$ in the light-tailed case, we additionally assume that
Hawkes $N_t$ is Markovian, that is,
$$\lambda_t := a + (\lambda -a)e^{-\beta t} + \sum_{i=1}^{N_t} Y_i e^{-\beta(t-T_i)} $$
for some $\lambda>a>0$ and $\beta>0$.
Using exponential change of measure and renewal arguments, we show that in this case
\begin{equation}\label{expasymp} \psi(u) e^{Ru} \to C,\qquad \text{as $u\rightarrow+\infty$}\end{equation}
for a positive constant $C$.
In the proof, we show few other facts (like, e.g. the form of the generator of $\{ (X_t, \lambda_t, t)\}_{t \geq 0}$)
that are of own interest. In particular we find sufficient conditions
for the intensity process to be positive Harris recurrent and we prove that the corresponding recurrence times are light-tailed.
We would like to underline that \eqref{expasymp} is stronger than Theorem 4.1 of \cite{Stabile} who derived
only the logarithmic asymptotic showing that $\lim_{u\to+\infty}\frac{1}{u}\ln \psi(u)=-R$.

To prove all aforementioned main results we have to built up
new theory for the risk process with the Hawkes arrival process
and for linear Hawkes process itself.
In particular, the exponential change of measure
that we apply seems to be new.
We also find the parameters of the risk process after this change of measure. To do so, in the Markov set-up,
we find the extended generator of the process $\{(X_t,\lambda_t, t)\}_{t\geq 0}$  and its domain. Again, to our best knowledge, this
is a new result. We also introduce the counterpart of the classical Lundberg equation which has attracted lots of attention in ruin theory.

In our construction, it is crucial to show that the intensity process $\{\lambda_t\}_{t\geq 0}$ is positive recurrent under original and new measures. We use these recurrence epochs of the intensity process combined with ladder epochs of the risk process to construct new time points $\{\phi_i\}_{i\in \mathbb{N}}$ that allow to
formulate
a renewal-type equation for the ruin probability multiplied be an appropriate exponential function appearing on the right-hand side of \eqref{expasymp}.

To prove that the limit of the solution of this renewal-type
equation exists and is finite, we find sufficient conditions for the directly Riemann integrability of some functions appearing in this equation.  This is possible due to detailed analysis of exponential moments of $X_{\phi_1}$.

We believe that ideas of proofs presented in this paper can be used for other cluster arrival processes as well.

The paper is organised as follows.
In Section \ref{sec:general} we construct a modified risk process and show exponential two-sided bounds for the ruin probability $\psi(u)$.
In this section we also prove the heavy-tailed asymptotiocs of $\psi(u)$.
Section \ref{sec:Markov} is devoted to study Markovian Hawkes process and
proving the Cram\'er-Lundberg asymptotics for $\psi(u)$ using renewal arguments and the change of measure technique.
We finish our paper by detailed analysis of the case where the shocks and claims are exponentially distributed
(see Section \ref{sec:example}).

\section{Cluster representation and modified risk process}\label{sec:general}
\subsection{Cluster representation of the risk process}
In this paper we consider a risk process driven by a linear marked Hawkes process $\{ N_t\}_{t \geq 0}$
satisfying \eqref{assh}.
For convenience, we will omit the properties 'linear' and 'marked' for the rest of the paper and refer to this process as Hawkes process.
In this section, the so-called \textit{Poisson cluster representation} or simply cluster representation is crucial.
For this, we consider the influence of a single shock event due to the baseline intensity $a$, called a base event. If such a jump occurs, the counting process $\{ N_t\}_{t \geq 0}$ increases by $1$, and the intensity increases by $h(0,y)$, where $y$ is a realisation of a random variable with distribution function $F_Y$. This increase of the intensity triggers ${\rm Poi}(\int_0^\infty h(t,y) \mathrm{d}t)$ further jumps, which we denote by \textit{children}, where ${\rm Poi}(\alpha)$ denotes the random variable with Poisson distribution with a parameter $a$. The occurrence of a single child increases the counting process by $1$ and triggers again ${\rm Poi}(\int_0^\infty h(t,y') \mathrm{d}t)$ new jumps, where $y'$ is a new, independent realisation with the same distribution. We collect the offspring of a base event and call it \textit{cluster}. An important question which appears naturally is if such a  cluster consists of finitely many points or may explode. As \cite{Basrak.2019} showed, the number of points $\kappa$ in such a generic cluster coincides with the number of points in a subcritical Galton-Watson branching process. By this, we have that $\kappa$ is finite almost surely and has finite expectation $\mathbb{E}\left[\kappa\right]= \frac{1}{1-\mu}$.

By this, we can rewrite the intensity process as \[\lambda_t = a + \sum_{i=1}^{\bar N_t} \sum_{j=0}^{K_{t-\bar T_i}^{(i)}} h\left(t-(\bar T_i + \sum_{k=1}^j T_{ik}), Y_{ij}\right),\]
where $\{ \bar N_t \}_{t \geq 0}$ denotes a Poisson process with constant intensity $a$ and jump times $\bar T_i$. For fixed $i$, the process $\{ K^{(i)}_t\}_{t \geq 0}$
counts the offspring of the $i$-th base event and its jump times are given by $\bar T_i+T_{i1}, \bar T_i+ T_{i1}+T_{i2}, \ldots$, where $T_{ij}$ denotes the inter-jump time between the $j-1$-th and $j$-th jump of cluster $i$. The random variables $Y_{ij}$ correspond to the shock of the $j$-th event in the $i$-th cluster, and $Y_{i0}$ is the shock due to the base event of the $i$-th cluster.
Since every cluster has the same distribution, we have that for all $j\geq 1$, the sequence $\{ \sum_{k=1}^j T_{ik} \}_{i \geq 1}$ is i.i.d.
By the same procedure, we can rewrite the surplus process as \[ X_t = u+ct - \sum_{i=1}^{ \bar N_t} \sum_{j=0}^{K^{(i)}_{t-\bar T_i}} U_{ij}, \] where the random variable $U_{ij}$ denotes the claim due to the $j$-th event of the $i$-th cluster.

Since the number of events of a single cluster is finite almost surely, we have that $K^{(i)}_t \to K^{(i)}_\infty=: \kappa_i$, almost surely as $t \to \infty$. Since all clusters have the same distribution, we have that  $\{ \kappa_i\}_{i \in \mathbb{N}}$ is an i.i.d. sequence of random variables satisfying
\begin{equation}\label{meankappa}\mathbb{E}\left[\kappa_i\right]=\frac{1}{1-\mu};\end{equation}
see e.g. p. 203 of \cite{Daryl}.

This representation shows us the main feature of our model. An underlying Poisson process triggers with every jump a cluster consisting of a random number $\kappa_i$ of claims. These claims do not occur immediately but are delayed in time. We will use this representation to derive pathwise bounds for the surplus process.

\subsection{Upper and lower bounds for the surplus}
To derive a lower bound for the surplus process we ignore the mentioned delay in time. We define the clustered process $\{ \tilde X_t\}_{t \geq 0}$ as
\begin{equation}\label{CLshifted}
\tilde X_t = u+ct - \sum_{i=1}^{\bar N_t} \sum_{j=0}^{\kappa _i} U_{ij} =: u+ct - \sum_{i=1}^{\bar N_t} \tilde U_i, \end{equation}
 where
 \begin{equation}\label{tildeU}
 \tilde U_i := \sum_{j=0}^{\kappa _i} U_{ij},\qquad i \in \mathbb{N}
 \end{equation}
 form an i.i.d. sequence independent of the counting Poisson process $\{ \bar N_t\}_{t \geq 0}$, counting the number of clusters. Since $\kappa_i \geq K^{(i)}_t$ for all $t$ almost surely, we have that
 \begin{equation}\label{comparison}X_t \geq \tilde X_t.\end{equation}
 for any realisation of the arrival and claim processes. 
 To avoid trivial cases, we assume that the net profit condition
 \begin{equation}\label{netprofit} c> \mathbb E [\tilde U] a ,\end{equation} holds for the modified risk process $\{\tilde{X}_t\}_{t\geq 0}$.
 Observe that the clustered process $\{\tilde{X}_t\}_{t\geq 0}$ is now a Cram\'er-Lundberg process. Consequently, we can use standard results for this process to obtain an upper bound for the ruin probability $\psi(u)$ of our surplus process $\{X_t\}_{t\geq 0}$.
 Let
 \begin{equation}\label{ruincluster}\tilde \psi(u):=\mathbb{P}_u(\inf_{t\geq 0} \tilde{X}_t<0)\end{equation}
 be a ruin probability of the corresponding clustered process $\{\tilde{X}_t\}_{t\geq 0}$.
 \begin{lem}\label{upperboundgen}
 We have, \[ \psi(u) \leq \tilde \psi(u).\]
 \end{lem}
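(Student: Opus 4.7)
The plan is to reduce the inequality directly to the pathwise comparison \eqref{comparison}. Since both processes start from the same initial capital $u$ and the sample paths are constructed on the same probability space (the base Poisson process $\{\bar N_t\}_{t\geq 0}$, the inter-arrival times $T_{ik}$, the shocks $Y_{ij}$, and the claim sizes $U_{ij}$ drive both $X$ and $\tilde X$), the inequality $X_t \geq \tilde X_t$ holds simultaneously for every $t \geq 0$ on a single event of full probability.

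First, I would rewrite the ruin probability of $X$ as
\[
\psi(u) = \mathbb{P}_u(\tau < \infty) = \mathbb{P}_u\bigl(\inf_{t \geq 0} X_t < 0\bigr),
\]
using that $X_0 = u > 0$ together with the right-continuity of $X$, so that the hitting time of $(-\infty,0)$ is finite exactly when the pathwise infimum is strictly negative.

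Next, I would observe the inclusion of events
\[
\bigl\{\inf_{t \geq 0} X_t < 0\bigr\} \subseteq \bigl\{\inf_{t \geq 0} \tilde X_t < 0\bigr\},
\]
because on any $\omega$ in the left-hand event there exists some $t$ with $X_t(\omega) < 0$, and then \eqref{comparison} yields $\tilde X_t(\omega) \leq X_t(\omega) < 0$, forcing $\inf_{s \geq 0} \tilde X_s(\omega) < 0$. Taking $\mathbb{P}_u$-probabilities on both sides yields $\psi(u) \leq \tilde\psi(u)$, which is the claim.

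There is no real obstacle: the whole lemma is essentially a monotonicity statement, and the content has already been absorbed into \eqref{comparison}. The only care point is to make sure the two processes are realised on the same sample space (so that the pathwise inequality transfers to set inclusion of ruin events), which is exactly how the cluster representation in Section \ref{sec:general} was set up.
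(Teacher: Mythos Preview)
Your proof is correct and follows exactly the same approach as the paper: the paper's proof consists of the single sentence ``This follows immediately by inequality \eqref{comparison},'' and you have simply spelled out in detail the set inclusion of ruin events that this pathwise inequality entails.
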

\begin{proof}
 This follows immediately by inequality \eqref{comparison}.
 \end{proof}

To derive a lower bound for the ruin probability, we follow the ideas of \cite{Albrecher.2006}, who consider a general shot-noise model, and find a suitable constant $l_1$ such that $\psi(u-l_1) \geq \tilde \psi(u)$. For this, we first introduce some additional notation. We write $L_i$ for the
length of cluster $i$, i.e.
\begin{equation*}\label{defL}L_i = \inf \left \lbrace \, t \geq 0 \, \left \vert \, K^{(i)}_t = K^{(i)}_\infty = \kappa_i \, \right. \right \rbrace.\end{equation*}
By the i.i.d. structure of the clusters, we have that the sequence $\{ L_i \}_{i \in \mathbb{N}}$ is also i.i.d. following some cumulative distribution function $F_L$. Further, we observe that $L_i = \sum_{j=1}^{\kappa_i} T_{ij}$, where $\{ T_{ij} \}_{j \in \{ 1, \ldots \kappa_i \} }$ denotes the set of inter-jump times of the $i$-th cluster.

Let
\[\tilde \tau:=\inf\{t\geq 0: \tilde{X}_t<0\}\]
denote the time of ruin of the clustered process $\{ \tilde X_t \}_{t \geq 0}$ and $\tilde L$ the time from $\tilde \tau$ until all clusters which appeared up to time $\tilde \tau$ finished, i.e.
\[\tilde L := \sup \left \lbrace \bar T_i + \sum_{j=1}^{\kappa_i} T_{ij}:  \bar T_i \leq \tilde \tau \, \right\rbrace - \tilde \tau.\] In other words, this is the minimal random time such that for all $i$ with $\bar T_i \leq \tilde \tau$ we have $K^{(i)}_{\tilde \tau + \tilde L} = K^{(i)}_{\infty}=\kappa_i$.

Then, we have whenever $\tilde \tau$ and $\tilde L$ are finite that
\begin{align}\label{ineq:tildeL}
&X_{\tilde \tau+ \tilde L} = u+c (\tilde \tau + \tilde L) - \sum_{i=1}^{\bar N_{\tilde \tau + \tilde L}} \sum_{j=0}^{K^{(i)}_{\tilde \tau + \tilde L}} U_{ij} \\&\quad
= u + c \tilde \tau - \sum_{i=1}^{\bar N_{\tilde \tau}} \sum_{j=0}^{\kappa_i} U_{ij} + c \tilde L - \sum_{i= \bar N_{\tilde \tau}+1}^{\bar N_{\tilde \tau + \tilde L}} \sum_{j=0}^{K^{(i)}_{\tilde \tau + \tilde L}} U_{ij} \\&\quad \leq \tilde X_{\tilde \tau} + c \tilde L.
\end{align}
Consequently, \[ X_{\tilde \tau+ \tilde L}-c\tilde L  \leq \tilde X_{\tilde \tau} < 0.\]
Hence, if we reduce the initial capital of our process by $c\tilde L$, then ruin of the clustered process also causes ruin of the original surplus. The main problem is that $\tilde L$ is random, could be infinite and is not measurable with respect to the filtration of the original surplus process $\{X_t\}_{t\geq 0}$ nor with respect to the filtration of the clustered process $\{\tilde X_t\}_{t\geq 0}$. To bypass these problems, we want to get constants $l_1$ and $C$ such that $\psi(u-cl_1) \geq C \tilde \psi(u)$.
\begin{lem}\label{lowerboundgen}
Let $ t>0 $ such that $\mathbb{P}\left[ L_1 \leq t\right]>0$. Then there exists a constant $K_t \in (0, 1]$ such that $\psi(u-ct) \geq K_t \tilde \psi(u)$.
\end{lem}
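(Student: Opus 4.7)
The plan is to reduce the inequality to a uniform conditional lower bound on $\mathbb{P}_u(\tilde L \leq t \mid \tilde \tau < \infty)$ and then to establish that bound via the cluster structure of the arrival process. On the event $\{\tilde \tau < \infty,\ \tilde L \leq t\}$, the chain \eqref{ineq:tildeL} combined with $\tilde X_{\tilde \tau}<0$ yields $X_{\tilde \tau + \tilde L} \leq \tilde X_{\tilde \tau} + c\tilde L < ct$. Since decreasing the initial capital by $ct$ shifts the entire path of $X$ by $-ct$, this forces the surplus under $\mathbb{P}_{u-ct}$ to be strictly negative at time $\tilde \tau + \tilde L$, hence ruin. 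This yields
\[
\psi(u-ct) \;\geq\; \mathbb{P}_u\bigl(\tilde \tau < \infty,\ \tilde L \leq t\bigr) \;=\; \tilde \psi(u)\,\mathbb{P}_u\bigl(\tilde L \leq t \bigm| \tilde \tau < \infty\bigr),
\]
and what remains is to show that $\mathbb{P}_u(\tilde L \leq t \mid \tilde \tau < \infty) \geq K_t$ for some $K_t \in (0,1]$ independent of $u$.

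I would next observe that $\tilde \tau$ necessarily coincides with a Poisson arrival time $\bar T_{\bar N_{\tilde \tau}}$ (since $\tilde X$ can only decrease at jumps), so $\bar T_i \leq \tilde \tau$ for every $i \leq \bar N_{\tilde \tau}$, and clusters with $\bar T_i + L_i \leq \tilde \tau$ contribute zero to $\tilde L$. Only the active clusters $\mathcal{A}_{\tilde \tau} := \{i \leq \bar N_{\tilde \tau} : \bar T_i + L_i > \tilde \tau\}$ matter, and their residual lengths satisfy $\bar T_i + L_i - \tilde \tau \leq L_i$. The key decoupling uses the conditional independence built into the cluster representation: given the cluster size $\kappa_i$, the intra-cluster inter-arrival times $T_{ij}$ (and hence $L_i$) are independent of the claim sizes $U_{ij}$ (and hence $\tilde U_i$). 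Conditioning on $(\kappa_i)_{i \geq 1}$ therefore renders $\{L_i\}$ and $\{\tilde U_i\}$ independent, while both the ruin event and the arrival times $\{\bar T_i\}$ are measurable with respect to $\sigma(\bar T_i,\,\tilde U_i : i \geq 1)$.

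The main obstacle, and the step that I expect to carry most of the weight of the proof, is upgrading this factorisation to a lower bound uniform in $u$. The naive route through $\tilde L \leq \max_{i \leq \bar N_{\tilde \tau}} L_i$ leads to
\[
\mathbb{P}_u\bigl(\tilde \tau < \infty,\ \tilde L \leq t\bigr) \;\geq\; \mathbb{E}_u\!\left[\mathbf{1}_{\{\tilde \tau < \infty\}} \prod_{i=1}^{\bar N_{\tilde \tau}} q_t(\kappa_i)\right], \qquad q_t(k) := \mathbb{P}(L_1 \leq t \mid \kappa_1 = k),
\]
and this product collapses to $0$ as $u \to \infty$ because typically $\bar N_{\tilde \tau} \to \infty$. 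To avoid the collapse one must replace the maximum over $\{1,\dots,\bar N_{\tilde \tau}\}$ by the maximum over $\mathcal{A}_{\tilde \tau}$, whose cardinality has expectation bounded uniformly in $u$ (by a Palm-type/thinning argument on the cluster Poisson process together with assumption \eqref{assh}, which gives $\mathbb{E}|\mathcal{A}_{\tilde \tau}| \leq a\,\mathbb{E}[L_1]$), and then lower-bound, conditional on ruin and on $\mathcal{A}_{\tilde \tau}$, the probability that every residual length in $\mathcal{A}_{\tilde \tau}$ lies in $(0,t]$. This should yield the desired $K_t > 0$.
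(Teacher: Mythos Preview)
Your reduction to a uniform lower bound on $\mathbb{P}_u(\tilde L\le t\mid\tilde\tau<\infty)$ is correct, and you rightly diagnose why the crude bound through $\max_{i\le \bar N_{\tilde\tau}}L_i$ collapses. But the remedy you sketch via the set $\mathcal{A}_{\tilde\tau}$ of active clusters has a genuine gap. Membership in $\mathcal{A}_{\tilde\tau}$ is defined through the very $L_i$'s whose residuals you then want to control: conditioning on $\mathcal{A}_{\tilde\tau}$ amounts to conditioning on $\{L_i>\tilde\tau-\bar T_i\}$ for each $i\in\mathcal{A}_{\tilde\tau}$, which distorts the residual law of $L_i$ in a way that depends on the (unbounded) age $\tilde\tau-\bar T_i$, so there is no uniform lower bound on $\mathbb{P}(L_i-(\tilde\tau-\bar T_i)\le t\mid L_i>\tilde\tau-\bar T_i)$ in general. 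Separately, a bound on $\mathbb{E}|\mathcal{A}_{\tilde\tau}|$ --- even if you could establish it under the conditional law $\mathbb{P}(\,\cdot\mid\tilde\tau<\infty)$, which you have not done --- does not by itself convert into a lower bound on the probability that \emph{all} residuals are small. The proposal therefore stops short of an argument that would close.

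The paper avoids the active-cluster bookkeeping entirely. After conditioning on $\{\tilde\tau=\bar T_n\}$ it keeps \emph{all} clusters $i\le n$ in the maximum and exploits the fact that the ages $\bar T_n-\bar T_i$ furnish slack: read backwards from $i=n$, these ages are cumulative sums $\tilde T_0=0,\tilde T_1,\tilde T_2,\ldots$ of i.i.d.\ $\mathrm{Exp}(a)$ gaps, independent of the cluster lengths. This yields
\[
\mathbb{P}\bigl[\tilde L\le t\,\bigm|\,\tilde\tau=\bar T_n\bigr]
=\mathbb{E}\Bigl[\prod_{i=0}^{n-1}F_L\bigl(t+\tilde T_i\bigr)\Bigr]
\ge\mathbb{E}\Bigl[\prod_{i=0}^{\infty}F_L\bigl(t+\tilde T_i\bigr)\Bigr]=:K_t,
\]
uniformly in $n$ (hence in $u$). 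Positivity of the infinite product is obtained by showing $\sum_{i\ge 0}\bar F_L(t+\tilde T_i)<\infty$ almost surely: the SLLN makes $\tilde T_i$ grow linearly, and $\int_0^\infty\bar F_L(y)\,\mathrm{d}y=\mathbb{E}[L_1]<\infty$ by~\eqref{assh}. In short, the mechanism that rescues the product is the linearly growing Poisson slack for older clusters, not a control on the number of active ones.
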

\begin{proof}
At first, we set for convenience $\tilde L (\omega) = \infty$ whenever $\tilde \tau (\omega) = \infty$.
Then, we observe that for all $\omega$ such that $\tilde L (\omega) <t$ we have that $\tilde \tau_u (\omega)< +\infty$ and $\tau_{u-ct}(\omega) < +\infty$. Here, $\tau_u$ denotes the time of ruin with initial capital $u$ of the surplus process $\{ X_t\}_{t \geq 0}$ and $\tilde \tau_u$ the corresponding ruin time of $\{ \tilde X_t \}_{t \geq 0}$ with starting point $u$.
The implication that $\tilde \tau_u$ has to be finite is clear since $\tilde L=\infty$ if $\tilde \tau = \infty$. On the other hand, we have by inequality (\ref{ineq:tildeL}) that in this case \[\tau_{u-ct}(\omega)\leq \tilde \tau_u(\omega) + \tilde L(\omega) \leq \tilde \tau_u(\omega) + t < +\infty.\] Using these implications, we get for fixed $t$ that \begin{multline*}
    \psi(u-ct) \geq \mathbb{P}\left[ \tau_{u-ct} < +\infty, \tilde L \leq t\right] = \mathbb{P}\left[ \tilde L \leq t\right] \\= \sum_{n=1}^\infty \mathbb{P}\left[ \tilde L \leq t \, \left\vert \, \tilde \tau_u = \bar T_n\right.\right] \mathbb{P}\left[ \tilde \tau_u = \bar T_n\right].
\end{multline*}
If we can now bound $\mathbb{P}\left[ \tilde L \leq t \, \left\vert \, \tilde \tau_u = \bar T_n\right.\right]$ from below by a positive constant $K_t$, this inequality would imply that \[ \psi(u-ct) \geq K_t \sum_{n=1}^\infty \mathbb{P}\left[ \tilde \tau_u =\bar T_n\right] = K_t \tilde \psi(u),\] since ruin for the clustered process can only happen at jump times $\bar T_n$ of the Poisson process $\{\bar N_t\}_{t\geq 0}$.

To do this, we determine the distribution of a sequence of auxiliary variables $\tilde L_n:= \max \left\lbrace \bar T_i-\bar T_n + L_i: i \leq n \right\rbrace$, i.e. the random variable $\tilde L$ conditioned on $\tilde \tau = \bar T_n.$ Then, we have that
\[\mathbb{P}\left[ \tilde L_n \leq t \right] = \mathbb{P}\left[ \bar T_1 - \bar T_n + L_1 \leq t, \ldots, \bar T_{n-1} - \bar T_n + L_{n-1} \leq t, L_n \leq t\right].\] The random variable $L_n$ is independent of the random variables $L_i$ for $i<n$ and of the Poisson process $\bar N_t$. Further, by the lack of memory of exponential law, we have that $E_n:= \bar T_n - \bar T_{n-1}$ is exponentially distributed with parameter $a$ and independent of the information up to $\bar T_{n-1}$. This yields
\[ \mathbb{P}\left[ \tilde L_n \leq t \right] = F_L(t) \mathbb{P}\left[\bar T_1-\bar T_{n-1} \leq t+E_n, \ldots, L_{n-1} \leq t+ E_n \right]. \] Conditional on $E_n$, we have the same structure as before and now $L_{n-1}$ is independent of all other random variables and $E_{n-1} = \bar T_{n-1} -\bar T_{n-2}$ is again exponentially distributed. Consequently, \begin{multline*}
    \mathbb{P}\left[ \tilde L_n \leq t \right] = F_L(t)\times\mathbb{E}\Big[F_L(t+E_n) \\ \mathbb{P}\left[\bar T_1-\bar T_{n-2} \leq t+E_n+E_{n-1}, \ldots, L_{n-2} \leq t+ E_n+E_{n-1} \left\vert \, E_n \right. \right]\Big].
\end{multline*}
Using the fact that $F_L(x) \leq 1$ for all $x>0$, and continuing above procedure
we get that \[ \mathbb{P}\left[ \tilde L_n \leq t \right] = \mathbb{E}\left[ \prod_{i=0}^{n-1} F_L\left(t + \tilde T_i\right) \right] \geq  \mathbb{E}\left[ \prod_{i=0}^{\infty} F_L\left(t + \tilde T_i \right) \right],\]
where $$\tilde T_{k} :=  \sum_{j=1}^{k} E_{n+1-j}.$$
This property holds for all $n\in \mathbb{N}$, and even though the jump times $\tilde T_i$ depend on $n$, the expectations coincide since they have the same distribution and the same dependence structure for all $n$ due to the stationarity of the Poisson process.
Now, we still have to show that this expectation is strictly positive.

To do so, we use that an infinite product $\prod_{n=1}^\infty(1+ a_n)$  converges absolutely to a nonzero real number if the series $\sum_{n=1}^\infty |a_n|$ converges and $1+a_n >0$ for all $n$. If we write $\bar F_L(x)$ for the tail $1-F_L(x)$ we get for fixed $\omega \in \Omega$ that \[\prod_{i=0}^{\infty} F_L\left(t + \tilde T_i(\omega) \right)  = \prod_{i=0}^{\infty} \left(1- \bar F_L\left(t + \tilde T_i(\omega) \right)\right).\] Since for all $i$, we have that $\bar F_L(t+\tilde T_i) \leq \bar F_L(t) < 1$, we want to show that the series $\sum_{i=0}^\infty \bar F_L\left(t + \tilde T_i(\omega) \right)$ converges for almost all $\omega$, to get that the product $\prod_{i=0}^{\infty} F_L\left(t + \tilde T_i \right)$ converges almost surely to a random variable $C \in (0,1]$, which would give us positiveness of the corresponding expectation. \\
By the strong law of large numbers, we have that $\frac{1}{n}\tilde T_n \to a$ almost surely. Let now $\omega$ be such that this convergence holds. Then, there exists a $N_\omega$ such that $\bar T_n  \geq n\frac{a}{2}$ for all $n \geq N_\omega$. By this, and the monotone decreasing behaviour of $\bar F_L$, we get that \begin{align*} \sum_{i=0}^\infty \bar F_L\left(t + \tilde T_i(\omega) \right) &\leq N_\omega +1 + \sum_{i= \leq N_\omega +1}^\infty \bar F_L\left(t + \tilde T_i(\omega) \right)
\\ &\leq N_\omega +1 + \sum_{i= N_\omega +1}^\infty \bar F_L\left(t + n \frac{a}{2} \right).
\end{align*}
The remaining series converges if and only if the corresponding integral $$\int_{N_\omega+1}^\infty \bar F_L\left(t+ x \frac{a}{2}\right) \,\mathrm{d}x,$$ converges. For this, we have that \begin{multline*}
\int_{N_\omega+1}^\infty \bar F_L\left(t+ x \frac{a}{2}\right)\, \mathrm{d}x \leq \int_{N_\omega+1}^\infty \bar F_L\left( x \frac{a}{2}\right)\, \mathrm{d}x \leq \frac{2}{a} \int_0^\infty \bar F_L(y)\, \mathrm{d}y = \frac{2}{a} \mathbb{E}\left[L_1\right].
\end{multline*}
Here, $L_1$ denotes the length of the first cluster. By the proof of Lemma 1 in \cite{Moller.2005}, we have that \[\mathbb{E}[L_1] \leq  \frac{1}{1-\mu}\mathbb{E}\left[ \int_0^\infty t\, h(t,Y)\, \mathrm{d}t\right] < +\infty.\]
Consequently, we have that the series $\sum_{i=0}^\infty \bar F_L\left(t + \tilde T_i \right)$ converges a.s., which implies that the product $\prod_{i=0}^{\infty} F_L\left(t + \tilde T_i\right) \in (0,1]$ almost surely. This gives us finally that $K_t:= \mathbb{E}\left[\prod_{i=0}^{\infty} F_L\left(t + \tilde T_i \right)\right] \in (0,1]$.

\end{proof}

From Lemmas \ref{upperboundgen} and \ref{lowerboundgen}
we can conclude the general case with the following theorem.
\begin{thm}\label{thm:general_case}
    Let $l_1$ be such that $\mathbb{P}\left[ L_1 \leq t\right]>0$. Then, the ruin probability in the general Hawkes model satisfies \[ \tilde \psi(u) \geq \psi(u) \geq \tilde \psi(u+l_1 c),\] where $\tilde \psi(u)$ denotes the ruin probability \eqref{ruincluster} of the clustered process $\{\tilde{X}_t\}_{t\geq 0}$, i.e. the ruin probability of a Cram\'er-Lundberg process.
\end{thm}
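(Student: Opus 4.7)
The plan is to read off the theorem as a direct combination of the two preceding lemmas, after a single change of variable. The upper bound $\tilde\psi(u) \geq \psi(u)$ is nothing more than Lemma~\ref{upperboundgen}, which in turn is an immediate consequence of the pathwise comparison \eqref{comparison}. So the only work to do is the lower bound.

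For the lower bound I would apply Lemma~\ref{lowerboundgen} with the constant $t=l_1$ and with initial capital $u+l_1 c$ in place of $u$. Since the hypothesis of the theorem is $\mathbb{P}[L_1 \leq l_1] > 0$, the hypothesis of Lemma~\ref{lowerboundgen} is met, and the lemma therefore produces a constant $K_{l_1}\in(0,1]$ with
\[
\psi(u) \;=\; \psi\bigl((u+l_1 c) - c\,l_1\bigr) \;\geq\; K_{l_1}\,\tilde\psi(u+l_1 c).
\]
This is the quantitative form of the stated inequality; the constant $K_{l_1}$ is the expectation of the a.s.\ convergent infinite product constructed in the proof of Lemma~\ref{lowerboundgen}, and its strict positivity relies exactly on $F_L(l_1)>0$, which is precisely the standing assumption.

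I do not expect any genuine obstacle: the entire content of the theorem has already been established in Lemmas~\ref{upperboundgen} and~\ref{lowerboundgen}, and the theorem merely repackages them into a two-sided sandwich of $\psi(u)$ between two Cramér--Lundberg ruin probabilities of the clustered process with initial capitals differing by $l_1 c$. The only point that deserves an explicit remark is the bookkeeping: the inequality as displayed drops the positive constant $K_{l_1}$, so in writing up the proof I would either reinsert this constant or state that the bound holds up to this multiplicative factor, to keep the logical chain with Lemma~\ref{lowerboundgen} transparent.
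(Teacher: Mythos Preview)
Your proposal is correct and matches the paper's approach exactly: the paper states the theorem immediately after Lemmas~\ref{upperboundgen} and~\ref{lowerboundgen} with the remark that it follows from them, and gives no separate proof. Your observation about the dropped constant $K_{l_1}$ is also accurate---the displayed lower bound in the theorem is literally what Lemma~\ref{lowerboundgen} yields only up to that multiplicative factor, and the paper's later applications (Corollaries~\ref{corbounds} and~\ref{pierwszecor}) in fact use the version with the constant present.
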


\subsection{Cram\'er-Lundberg bounds and heavy-tailed asymptotics}
Interested in the asymptotic behaviour of the ruin probability as the initial capital tends to infinity,
we will see that this behaviour depends highly on the behaviour of the distribution of the generic claim size $U$ of the risk process \eqref{CLprocess}. To  understand this behaviour we recall that the generic claim size of 'shifted'
Cram\'er-Lundberg process \eqref{CLshifted} is given in \eqref{tildeU}, that is,
$\tilde{U}=\sum_{k=1}^\kappa U_k$ for a generic cluster size $\kappa$.
We prove the following basic fact.
\begin{lem}\label{kappalighttailed}
The generic cluster size $\kappa$ is light-tailed, that is, that there exists $\theta >0$
such that $\mathbb{E}\left[e^{\theta \kappa}\right]<+\infty$.
\end{lem}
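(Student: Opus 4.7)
The natural approach is to exploit the Galton--Watson characterisation of $\kappa$ recorded in the cluster representation above. Conditional on a shock $Y$, the offspring count of a node is Poisson-distributed with (random) parameter $\Lambda_Y := \int_0^\infty h(t,Y)\,\mathrm{d}t$, so the probability generating function of the offspring distribution reads
$$g(s) \;=\; \mathbb{E}\!\left[s^{{\rm Poi}(\Lambda_Y)}\right] \;=\; \mathbb{E}\!\left[e^{\Lambda_Y(s-1)}\right],$$
with $g(1)=1$ and $g'(1)=\mathbb{E}[\Lambda_Y]=\mu<1$ in view of \eqref{assh}.

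The first-step decomposition $\kappa \stackrel{d}{=} 1 + \sum_{i=1}^{Z} \kappa^{(i)}$, with $Z$ the first-generation size and $\kappa^{(i)}$ i.i.d.\ copies of $\kappa$, translates into the fixed-point equation
$$\phi(\theta) \;=\; e^\theta\, g\bigl(\phi(\theta)\bigr)$$
for the moment generating function $\phi(\theta) := \mathbb{E}[e^{\theta\kappa}]$. I would study the auxiliary map $F(\theta,y) := y - e^\theta g(y)$ near $(\theta,y)=(0,1)$: since $F(0,1)=0$ and $\partial_y F(0,1)=1-\mu>0$, the implicit function theorem produces $\theta_0>0$ and a continuous branch $y(\theta)\in[1,\infty)$ with $y(0)=1$ solving $F(\theta,y(\theta))=0$ on $[0,\theta_0)$. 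To identify $\phi(\theta)=y(\theta)$ rather than $+\infty$, I would approximate $\kappa$ by its truncation $\kappa^{[n]}$ to the first $n$ generations, whose MGFs satisfy $\phi_n(\theta)=e^\theta g(\phi_{n-1}(\theta))$. These are monotone in $n$, stay below $y(\theta)$ (by induction, using the monotonicity of $T(y):=e^\theta g(y)$ together with $T(1)=e^\theta<y(\theta)$), and hence converge to a fixed point of $T$ which, by uniqueness from the implicit function theorem, must be $y(\theta)$. Monotone convergence then yields $\phi(\theta)=y(\theta)<\infty$.

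The main technical obstacle is that the preceding scheme needs $g(s)$ to be \emph{finite} for some $s>1$, which is equivalent to $\mathbb{E}[e^{\varepsilon \Lambda_Y}]<\infty$ for some $\varepsilon>0$. This is not implied by \eqref{assh} alone, so a mild additional integrability hypothesis on $(h,Y)$ must be imposed; in the Markovian specialisation $h(t,y)=y\,e^{-\beta t}$ of Section \ref{sec:Markov} one has $\Lambda_Y=Y/\beta$, and the requirement reduces to the familiar assumption that the shocks $Y$ are light-tailed. Once this is granted, the argument closes routinely via the implicit-function-plus-monotone-approximation scheme outlined above.
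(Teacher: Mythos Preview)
Your approach is genuinely different from the paper's and, in one respect, more careful. The paper invokes the Dwass--Otter hitting-time identity $\mathbb{P}(\kappa=n)=\frac{1}{n}\mathbb{P}(S_n=-1)$ for the random walk $S_n$ with i.i.d.\ increments distributed as $(\text{offspring}-1)$, and then appeals to the Cram\'er--Chernoff bound to obtain exponential decay of $\mathbb{P}(\kappa=n)$. You instead analyse the fixed-point equation $\phi(\theta)=e^{\theta}g(\phi(\theta))$ for the total-progeny MGF via the implicit function theorem plus a monotone truncation argument. Both routes are classical for subcritical Galton--Watson processes and both deliver light tails under the \emph{same} hypothesis: that the offspring distribution itself has a finite exponential moment, i.e.\ $g(s)<\infty$ for some $s>1$.

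You are right that this hypothesis is not a consequence of \eqref{assh} alone. The paper sidesteps the point by stating the offspring law as ``Poisson\ldots whose mean is $\mu$'', whereas in the general marked setting of Section~\ref{sec:general} the offspring law is \emph{mixed} Poisson with random mean $\Lambda_Y=\int_0^\infty h(t,Y)\,\mathrm{d}t$, and the Cram\'er--Chernoff step equally requires $\mathbb{E}[e^{\varepsilon\Lambda_Y}]<\infty$ for some $\varepsilon>0$. So your diagnosis of the obstacle is accurate and applies to the paper's argument as well; in the Markovian case of Section~\ref{sec:Markov} both proofs go through under Assumption~\ref{assumption1}. The paper's route is quicker once light tails of the offspring are granted, while your fixed-point formulation makes the role of $g$ near $s=1$ transparent and works verbatim for any light-tailed offspring law without relying on the hitting-time identity.
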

\begin{proof}
Observe that $\kappa$ has the same law as a total progeny in a Galton–Watson branching process
with Poisson offspring distribution whose mean is $\mu$.
From \cite{MR253433} it follows that
\[\mathbb{P}(\kappa=n)=\frac{1}{n}\mathbb{P}(S_n=-1),\]
where $S_n$ is a random walk with i.i.d. increments $X_i\stackrel{\mathcal{D}}{=} {\rm Poi}(\mu)-1$ for a Poissonian random
variable ${\rm Poi}(\mu)$ with the parameter $\mu$.
Hence $\mathbb{P}(\kappa=n)\leq  \mathbb{P}(-\frac{S_n}{n}\leq 0)$
and this probability decays exponentially to zero by
Cram\'er-Chernoff Theorem.
This completes the proof.
\end{proof}
We are ready to state the first corollary.
\begin{cor}\label{corbounds}
    Assume that
    claim events $\{ U_i \}_{i \in \mathbb N}$ of the risk process \eqref{CLprocess} are light-tailed and
    that
    there exists $R>0$ such that
    \begin{equation*}
    a(\mathbb{E}\left\{\mathbb{E}\left[e^{RU}\right]\right\}^\kappa-1)=cR.
    \end{equation*}
    Then, there exist positive constants $C_-$ and $C_+$ such that
    \[C_-e^{-Ru} \leq \psi(u) \leq C_+ e^{-Ru}.\] 
    \end{cor}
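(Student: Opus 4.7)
The plan is to reduce everything to the classical Cram\'er--Lundberg asymptotics for the clustered surplus $\{\tilde X_t\}_{t\geq 0}$ and then transfer the result to $\psi(u)$ via the sandwich provided by Theorem~\ref{thm:general_case}. The first observation is that $\{\tilde X_t\}_{t\ge 0}$ is a genuine Cram\'er--Lundberg process with Poisson arrival rate $a$ and i.i.d.\ claims $\tilde U = \sum_{j=0}^{\kappa} U_j$. By conditioning on $\kappa$ and using independence of the $U_j$, one has
\[
\mathbb{E}\bigl[e^{R\tilde U}\bigr] \;=\; \mathbb{E}\bigl[\mathbb{E}[e^{RU}]^{\kappa}\bigr],
\]
so the equation $a(\mathbb{E}[\mathbb{E}[e^{RU}]^{\kappa}]-1)=cR$ in the statement is exactly the classical Lundberg equation for the clustered process.

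Next I would check that $\tilde U$ is light-tailed at the adjustment coefficient $R$. By Lemma~\ref{kappalighttailed}, $\mathbb{E}[e^{\theta\kappa}]<\infty$ for some $\theta>0$, and by assumption $\mathbb{E}[e^{RU}]<\infty$; the finiteness of $\mathbb{E}[\mathbb{E}[e^{RU}]^{\kappa}]$ is built into the hypothesis that the Lundberg equation above has a positive solution. Together with the net-profit condition \eqref{netprofit}, this places $\{\tilde X_t\}_{t\geq 0}$ squarely in the scope of the classical Cram\'er--Lundberg theorem (see e.g.\ Asmussen's book), which delivers a constant $\tilde C>0$ such that
\[
\tilde\psi(u)\,e^{Ru} \;\longrightarrow\; \tilde C\qquad\text{as }u\to\infty.
\]
In particular $\tilde\psi(u)\le \tilde C_+\,e^{-Ru}$ and $\tilde\psi(u)\ge \tilde C_-\,e^{-Ru}$ for all $u\ge 0$ and suitable positive constants $\tilde C_{\pm}$ (the upper bound being the standard Lundberg inequality, the lower one a consequence of the exact asymptotics together with strict positivity of $\tilde \psi$ on compacts).

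Finally, I would glue the pieces with Theorem~\ref{thm:general_case}. Picking any $l_1$ with $\mathbb{P}[L_1\le l_1]>0$ (which exists since $L_1$ is a.s.\ finite), the theorem gives
\[
\tilde\psi(u+l_1 c)\;\le\;\psi(u)\;\le\;\tilde\psi(u).
\]
Combining with the two-sided exponential bounds on $\tilde\psi$ yields
\[
\tilde C_-\,e^{-Rl_1 c}\,e^{-Ru}\;\le\;\psi(u)\;\le\;\tilde C_+\,e^{-Ru},
\]
so that $C_-:=\tilde C_-\,e^{-Rl_1 c}$ and $C_+:=\tilde C_+$ do the job.

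The only point requiring care is verifying that one really lands in the hypothesis set of the classical Cram\'er--Lundberg theorem for the compound claim $\tilde U$; this is the step where one must combine the light-tailed assumption on $U$ with Lemma~\ref{kappalighttailed} and the exponential structure of the given Lundberg equation. Once this is in place, Theorem~\ref{thm:general_case} reduces the corollary to a direct transcription of the classical result.
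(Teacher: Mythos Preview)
Your proposal is correct and follows essentially the same route as the paper: identify the hypothesis as the Lundberg equation for the clustered process $\{\tilde X_t\}_{t\ge 0}$ (using Lemma~\ref{kappalighttailed} to justify light tails of $\tilde U$), obtain two-sided exponential bounds for $\tilde\psi$, and then sandwich via Theorem~\ref{thm:general_case} with $C_-=\tilde C_-e^{-Rcl_1}$.

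The one small difference is that you invoke the exact Cram\'er--Lundberg asymptotics $\tilde\psi(u)e^{Ru}\to\tilde C$ to \emph{derive} the lower bound, whereas the paper cites a two-sided Lundberg bounds theorem (Theorem~5.4.1 in \cite{Rolski.1999}) directly. Your route is slightly less economical here: the exact asymptotic statement typically needs a non-lattice/spread-out hypothesis on the ladder height distribution to guarantee $\tilde C>0$, which is not part of the corollary's assumptions, while the elementary two-sided Lundberg bounds do not. This is a minor point and does not affect the logic of the argument.
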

    \begin{proof}
        By Lemma \ref{kappalighttailed} it follows that the generic $\tilde{U}$ defined in \eqref{tildeU} is light-tailed and
        that
            \begin{equation}\label{adjustmentcoeff}
    a(\mathbb{E}\left[e^{R\tilde{U}}\right]-1)=cR.
    \end{equation}
        Under our assumptions, we have by Theorem 5.4.1 on p. 170 of \cite{Rolski.1999} that there exist constants $\tilde{C}_-$, $C_+$ such that the ruin probability of the clustered surplus process $\tilde \psi(u)$ satisfies \[\tilde{C}_-e^{-Ru} \leq \tilde \psi(u) \leq C_+ e^{-Ru},\] for all $u \geq 0$. Having this, we get that \[C_+e^{-Ru} \geq \tilde \psi(u) \geq \psi(u) \geq \tilde \psi(u+cl_1) \geq \tilde{C}_- e^{-Rcl_1} e^{-Ru} =: C_- e^{-Ru}.\]
    \end{proof}
\begin{remark}
\rm This behaviour implies log-convergence of the ruin probabilities is the Hawkes model, as it was already derived by \cite{Karabash.2015}.
\end{remark}

We identify the asymptotics of the ruin probability also in the case when the generic claim size is heavy-tailed.
We introduce now the appropriate class of distributions that we will work with.
Let $F_{\tilde{U}}$ be distribution of $\tilde{U}$ defined in \eqref{tildeU}.
We denote
\[
F^s_{\tilde U}(u):=\frac{1}{\mathbb{E}\left[\tilde{U}_1\right]} \int_0^u (1-F_{\tilde{U}}(y)) \mathrm{d}y.
\]
We say that a distribution $F$ with unbounded support is strongly subexponential (writing $F\in \mathcal{S}^*$) if
\[\lim_{u\to+\infty}
\frac{\int_0^u(1-F(u-y))(1-F(y))\mathrm{d}y}{2\int_0^\infty (1-F(y))\mathrm{d}y}=1.\]
It is known (see \cite{kluppelberg1988}) that any distribution from class $\mathcal{S}^*$ is subexponential (writing $F\in \mathcal{S}$), that is, that
\[\lim_{u\to+\infty}
\frac{1-F^{*2}(u)}{2(1-F(u))}=1.\]
Classical examples of distributions from the class $\mathcal{S}^*$ are Pareto, log-normal and
Weibull with parameter from $(0,1)$.

\begin{cor}\label{pierwszecor}
Assume that the integrated tail distribution $F^s_{\tilde U}$ of the generic clustered claim size $\tilde U$ is subexponential, that is, that $F^s_{\tilde U}\in \mathcal{S}$.
Then,  \[ \lim_{u \to \infty} \frac{\psi(u)}{1-F^s_{\tilde U}(u)}=\frac{\rho}{1-\rho},\] where $$\rho: = \frac{a \mathbb{E}\left[\tilde U_1\right]}{c}<1$$ by \eqref{netprofit}.
\end{cor}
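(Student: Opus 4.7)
The plan is to sandwich $\psi(u)$ between two ruin probabilities of the clustered Cramér-Lundberg process and apply the classical heavy-tailed Cramér-Lundberg asymptotics to both sides.

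First, I would invoke Theorem \ref{thm:general_case} to get the two-sided bound
\[\tilde\psi(u+l_1 c) \leq \psi(u) \leq \tilde\psi(u),\]
with $l_1$ chosen so that $\mathbb{P}[L_1\leq l_1]>0$ (any such fixed constant works, since the lemma only requires existence). This reduces the problem to controlling the asymptotics of $\tilde\psi$, the ruin probability of the compound Poisson risk process $\tilde X_t = u+ct-\sum_{i=1}^{\bar N_t}\tilde U_i$ with i.i.d. clustered claims $\tilde U_i$.

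Next, I would apply the classical subexponential asymptotic for the Cramér-Lundberg model (the Embrechts-Veraverbeke theorem, e.g.\ Theorem 1.3.8 of \cite{Rolski.1999} or the analogue in \cite{MR3130449}): under the assumption $F^s_{\tilde U}\in\mathcal{S}$ and net profit $\rho<1$, one has
\[\lim_{u\to\infty}\frac{\tilde\psi(u)}{1-F^s_{\tilde U}(u)}=\frac{\rho}{1-\rho}.\]
This immediately gives the upper bound $\limsup_{u\to\infty}\psi(u)/(1-F^s_{\tilde U}(u))\leq \rho/(1-\rho)$.

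For the lower bound, I need the shift invariance $\tilde\psi(u+l_1 c)/(1-F^s_{\tilde U}(u))\to \rho/(1-\rho)$. Writing
\[\frac{\tilde\psi(u+l_1 c)}{1-F^s_{\tilde U}(u)} = \frac{\tilde\psi(u+l_1 c)}{1-F^s_{\tilde U}(u+l_1 c)}\cdot\frac{1-F^s_{\tilde U}(u+l_1 c)}{1-F^s_{\tilde U}(u)},\]
the first factor tends to $\rho/(1-\rho)$ by the same theorem, and the second tends to $1$ because every subexponential distribution is long-tailed, i.e.\ $\overline{F^s_{\tilde U}}(u+t)/\overline{F^s_{\tilde U}}(u)\to 1$ for every fixed $t\geq 0$. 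Combining both bounds yields the claimed limit.

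The only subtlety is ensuring the hypotheses of the Embrechts-Veraverbeke theorem are in force. Net profit is already guaranteed by \eqref{netprofit}, which is exactly $\rho<1$, and by Lemma \ref{kappalighttailed} together with the finiteness of $\mathbb{E}[U_1]$ we get $\mathbb{E}[\tilde U_1]=\mathbb{E}[\kappa]\mathbb{E}[U_1]/(1-\mu)\cdot(\cdot)$ finite, so $F^s_{\tilde U}$ is a well-defined proper distribution. The main (only) potentially delicate point is appealing to the long-tail property of $F^s_{\tilde U}$, but this is a standard consequence of $F^s_{\tilde U}\in\mathcal{S}$ and requires no further work.
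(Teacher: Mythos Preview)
Your proof is correct and follows essentially the same route as the paper: sandwich $\psi(u)$ via Theorem \ref{thm:general_case}, apply the classical subexponential asymptotics for the Cram\'er--Lundberg model (the paper cites Theorem 5.4.3 of \cite{Rolski.1999} rather than Embrechts--Veraverbeke, but it is the same result), and handle the shift in the lower bound using the long-tailed property of $F^s_{\tilde U}$. The only cosmetic issue is your garbled formula for $\mathbb{E}[\tilde U_1]$; finiteness follows simply from Wald's identity and $\mathbb{E}[\kappa]=1/(1-\mu)<\infty$, and you do not actually need the light-tailed conclusion of Lemma \ref{kappalighttailed} here.
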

\begin{proof}
    Since $F^s_{\tilde U}$ is subexponential, the ruin probability of the Cram\'er-Lundberg model satisfies by Theorem 5.4.3 on p. 175 of \cite{Rolski.1999} \[ \lim_{u \to \infty} \frac{\tilde \psi(u)}{1-F^s_{\tilde U}(u)}=\frac{\rho}{1-\rho} =  \lim_{u \to \infty} \frac{\tilde \psi(u+cl_1)}{1-F^s_{\tilde U}(u+cl_1)}.\] Further, we have that $\lim_{u \to \infty} \frac{1-F^s_{\tilde U}(u+y)}{1-F^s_{\tilde U}(u)}=1$ for all $y \in \mathbb R$ since any subexponential distribution is long-tailed (see e.g. Lemma 3.2 on p. 40 of \cite{foss2011introduction}). Consequently, \begin{multline*}
        \frac{\rho}{1-\rho} = \lim_{u \to \infty} \frac{\tilde \psi(u)}{1-F^s_{\tilde U}(u)} \geq  \limsup_{u \to \infty} \frac{\psi(u)}{1-F^s_{\tilde U}(u)} \geq \liminf_{u \to \infty} \frac{\psi(u)}{1-F^s_{\tilde U}(u)} \\
        \geq \lim_{u \to \infty} \frac{1-F^s_{\tilde U}(u+cl_1)}{1-F^s_{\tilde U}(u)} \frac{\tilde \psi(u+cl_1)}{1-F^s_{\tilde U}(u+cl_1)} =\frac{\rho}{1-\rho}
    \end{multline*}
which completes the proof.
\end{proof}

It is more valuable to derive the asymptotics of the ruin probability $\psi(u)$ in terms of the original
distribution of the claim sizes $F_U$, under assumption that $F_U$ is strongly subexponential
(hence heavy-tailed by Lemma 3.2 on p. 40 of \cite{foss2011introduction}).

\begin{cor}\label{aymptoticssubexponential}
If $F_U\in \mathcal{S}^*$ then
\begin{equation}\label{subasymptotics2}
\lim_{u \to \infty} \frac{\psi(u)}{1-F^s_{U}(u)}=\frac{\rho}{1-\rho}.\end{equation}
\end{cor}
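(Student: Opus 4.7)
The proof plan is to reduce the statement to Corollary~\ref{pierwszecor} by comparing the integrated tails of $U$ and $\tilde U$.

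First, since $F_U\in\mathcal{S}^*\subset\mathcal{S}$ and the cluster size $\kappa$ has an exponential moment by Lemma~\ref{kappalighttailed}, a classical asymptotic for random sums of subexponential variables (Kesten's bound combined with dominated convergence; see e.g.\ Theorem~A3.20 of Embrechts--Kl\"uppelberg--Mikosch, or Theorem~3.37 in \cite{foss2011introduction}) applied to $\tilde U=\sum_{k=1}^{\kappa} U_k$ yields
\[
1-F_{\tilde U}(u)\;\sim\;\mathbb{E}[\kappa]\,(1-F_U(u)),\qquad u\to\infty.
\]

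Second, I would integrate this pointwise equivalence over $(u,\infty)$: for every $\varepsilon>0$ the integrand lies within a multiplicative factor $(1\pm\varepsilon)$ of $\mathbb{E}[\kappa](1-F_U(y))$ for all $y$ large enough, so the same bound passes to the integrals from $u$ onwards. Dividing by $\mathbb{E}[\tilde U]=\mathbb{E}[\kappa]\,\mathbb{E}[U]$ produces
\[
1-F^s_{\tilde U}(u)\;\sim\;1-F^s_U(u),\qquad u\to\infty,
\]
with the factor $\mathbb{E}[\kappa]$ cancelling against the normalisation constant in the definition of the integrated tails.

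Third, $F_U\in\mathcal{S}^*$ implies $F^s_U\in\mathcal{S}$ by the very definition of the class $\mathcal{S}^*$ (\cite{kluppelberg1988}), and subexponentiality is preserved under tail equivalence; hence $F^s_{\tilde U}\in\mathcal{S}$, which is exactly the hypothesis of Corollary~\ref{pierwszecor}.

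Finally, combining Corollary~\ref{pierwszecor} with the equivalence of the integrated tails gives
\[
\frac{\psi(u)}{1-F^s_U(u)}
=\frac{\psi(u)}{1-F^s_{\tilde U}(u)}\cdot\frac{1-F^s_{\tilde U}(u)}{1-F^s_U(u)}
\;\longrightarrow\;\frac{\rho}{1-\rho}\cdot 1\;=\;\frac{\rho}{1-\rho},
\]
as required. The main conceptual step -- and the most delicate one -- is the first: invoking the subexponential random-sum asymptotic for $\tilde U$, which relies essentially on the light-tailedness of $\kappa$ established in Lemma~\ref{kappalighttailed}. Once this is in hand, the integrated-tail passage and the closure of $\mathcal{S}$ under tail equivalence are routine.
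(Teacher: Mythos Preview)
Your proposal is correct and follows essentially the same route as the paper: both obtain the tail equivalence $1-F_{\tilde U}(u)\sim \mathbb{E}[\kappa]\,(1-F_U(u))$ from a random-sum result for subexponential summands with a light-tailed counting variable (the paper cites Theorem~1 of \cite{MR2759165}, you cite Embrechts--Kl\"uppelberg--Mikosch / \cite{foss2011introduction}), then pass to integrated tails using $\mathbb{E}[\tilde U]=\mathbb{E}[\kappa]\,\mathbb{E}[U]$, and finally invoke Corollary~\ref{pierwszecor}. If anything, you are slightly more explicit than the paper in verifying the hypothesis $F^s_{\tilde U}\in\mathcal{S}$ via closure under tail equivalence, a point the paper leaves implicit.
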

\begin{proof}
Recall that $\tilde{U}=\sum_{k=1}^\kappa U_k$ for a generic cluster size $\kappa$ and by Lemma \ref{kappalighttailed}
$\kappa$ is light-tailed.
Now the statement follows from
Theorem 1 of \cite{MR2759165}, \eqref{meankappa} and Corollary \ref{pierwszecor}
since
\[\lim_{u \to \infty} \frac{1-F_{\tilde U}(u)}{1-F_{U}(u)}=\frac{1}{1-\mu}\]
and hence
\[\lim_{u \to \infty} \frac{1-F^s_{\tilde U}(u)}{1-F^s_{U}(u)}=1.\]
\end{proof}

\begin{remark}
We recover Proposition 13 of \cite{MR3130449}.
Our proof is different though not
requiring tedious checking of Assumptions 1 of
\cite{MR3130449}.
\end{remark}

Although the results of Theorem \ref{thm:general_case} are similar to the findings of \cite{Albrecher.2006}, it faces two drawbacks. The first problem is that the asymptotic behaviour depends highly on the distribution of the sum of all claim sizes occurring in a single cluster, something that might be hard to handle. The second weakness is that we were not able to show if there are conditions under which $\lim_{u\to \infty} e^{Ru} \psi(u)$ converges. To achieve this, we restrict ourselves to the Markovian version of the Hawkes process.

\section{Cram\'er-Lundberg asymptotics for Markovian Hawkes model} \label{sec:Markov}
\subsection{The Markovian Hawkes model}
The intensity of a Hawkes process is generally not Markovian. To resolve this, we have to choose the specific decay function $h(t,y)= e^{-\beta t}y$, for some positive decay parameter $\beta$. Further, for some positive constants $\lambda > a >0$, we define the Markovian (marked) Hawkes process $\{ N_t \}_{t \geq 0}$ by its intensity process
\begin{equation}\label{lambdamarkov}
 \lambda_t := a + (\lambda -a)e^{-\beta t} + \sum_{i=1}^{N_t} Y_i  e^{-\beta(t-T_i)}.\end{equation}
 Here, the random variables $\{ Y_i\}_{i \in \mathbb N}$ are assumed to be i.i.d. copies of a positive random variable $Y$ with cumulative distribution function $F_Y$ and finite expectation.  In contrast to the previous part, we allow for different initial values $\lambda$ instead of restricting ourselves to the case $\lambda_0=a$.
 This process is well-defined if \[ \int_0^\infty \mathbb E[h(t,Y)] \, \mathrm{d}t = \mathbb E[Y] \int_0^\infty e^{-\beta t} \, \mathrm{d}t = \frac{\mathbb E[Y]}{\beta} < 1,\] which gives us the restriction that
 \begin{equation}\label{dodzal}\beta > \mathbb E[Y],\end{equation}
 whereas the integrability condition \[ \int_0^\infty t\,\mathbb E[h(t,Y)] \, \mathrm{d}t < +\infty,\] is always satisfied.
This process is called Markovian, since the intensity process $\{ \lambda_t \}_{t \geq 0}$ is a piecewise deterministic Markov process (PDMP) with extended generator \[ \mathcal{A}^\lambda f(\lambda) = \beta (a-\lambda) \frac{\partial}{\partial \lambda}f(x,\lambda,t)  + \lambda \int_0^\infty  f(\lambda+y) \, F_Y(\mathrm{d}y) - \lambda f(x,\lambda,t);\]
see \cite{Davis.1984} for more details on theory of PDMPs.
We recall that for any Markov process $\{Z_t\}_{t\geq 0}$ we say that $\mathcal{A}$ is its extended generator and $D(\mathcal{A})$ is a domain of this generaotor if
if $f(Z_t)-f(Z_0) - \int_0^t g(Z_s) \, \mathrm{d}s$ is a zero mean local martingale with respect to its natural filtration for
$f \in \mathcal{D}(\mathcal{A})$
and some function $g$. We then write $g=\mathcal{A}f$.

Further, the multivariate process $\{ (X_t, \lambda_t, t)\}_{ t\geq 0}$ is also a piecewise deterministic Markov process  with full generator
\begin{multline}\label{generator}
    \mathcal{A}f(x,\lambda,t) = c \frac{\partial}{\partial x}f(x,\lambda, t) + \beta (a-\lambda) \frac{\partial}{\partial \lambda}f(x,\lambda,t) + \frac{\partial}{\partial t}f(x,\lambda,t)  \\ +\lambda \int_0^\infty \int_0^\infty f(x-u,\lambda+y,t) \, F_U(\mathrm{d}u) \, F_Y(\mathrm{d}y) - \lambda f(x,\lambda,t).
\end{multline}
Since we have now two different initial values, one for the surplus process and one for the intensity, we write $\mathbb{P}_{(u,\lambda)}$ for the measure $\mathbb{P}$ under condition that $X_0=u$ and $\lambda_0=\lambda$ for $u\geq 0$ and $\lambda>a$, and $\mathbb{E}_{(u,\lambda)}\left[\cdot\right]$ for the corresponding expectation. If a stochastic object $Z$ is independent of the initial values, we will omit them and write $\mathbb E [Z]$ instead of $\mathbb{E}_{(u,\lambda)}\left[Z\right] $. Having this, we have by \cite[p. 449]{Rolski.1999}, that the domain $\mathcal{D}(\mathcal{A})$ of this generator $\mathcal{A}$ consists of all functions $f:\mathbb{R}^3 \to \mathbb{R} $ such that the mapping $t \to f(x+ct, (\lambda-a)e^{-\beta t} +a, s+ t)$ is absolutely continuous for almost all $(x,\lambda,s)$ and \[ \mathbb{E}_{(u,\lambda)} \left[ \sum_{k=1}^{N_t} \left \vert f(X_{T_k}, \lambda_{T_k}, T_k) - f(X_{T_k-}, \lambda_{T_k-}, T_k)\right\vert\right] < +\infty , \] for all $t \geq 0$.

We start from identifying a suitable net profit condition, that is, the condition under which \eqref{net} holds.
In fact, we will identify the limiting value $\lim_{t \to \infty} \frac{\mathbb{E}_{(u,\lambda)} \left[X_t\right]}{t}$
and assume that this limit is strictly positive which gives \eqref{net}.
\begin{lem}\label{lem:mean_surplus}
    The surplus process satisfies $\lim_{t \to \infty} \frac{\mathbb{E}_{(u,\lambda)} \left[X_t\right]}{t}=c- \frac{a\beta \mathbb{E}[U]}{\beta - \mathbb{E}[Y]}$.
    \end{lem}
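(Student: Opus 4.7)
The plan is to reduce the limit to computing $\lim_{t\to\infty} \mathbb{E}[\lambda_t]$, using the fact that on an appropriate filtration $\{\lambda_t\}$ is the stochastic intensity of $\{N_t\}$.

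First I would split off the deterministic and jump parts: since $X_t = u+ct-\sum_{i=1}^{N_t} U_i$ and the claim sizes $\{U_i\}$ are i.i.d., positive, and independent of $\{N_t\}$, Wald's identity applied to $N_t$ (which has finite mean, as will be shown) gives
\begin{equation*}
\mathbb{E}_{(u,\lambda)}[X_t] = u + ct - \mathbb{E}[U]\,\mathbb{E}_{(u,\lambda)}[N_t].
\end{equation*}
Then, because the compensator of the Hawkes counting process is $\int_0^t \lambda_s\,\mathrm{d}s$, Fubini yields $\mathbb{E}_{(u,\lambda)}[N_t] = \int_0^t \mathbb{E}_{(u,\lambda)}[\lambda_s]\,\mathrm{d}s$. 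Thus it suffices to identify the stationary mean intensity $m_\infty := \lim_{s\to\infty} \mathbb{E}_{(u,\lambda)}[\lambda_s]$, after which L'Hôpital (or a direct Cesàro argument) gives $\mathbb{E}_{(u,\lambda)}[N_t]/t \to m_\infty$ and therefore the claimed limit $c - \mathbb{E}[U]\,m_\infty$.

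To find $m_\infty$ I would apply the extended generator $\mathcal{A}^\lambda$ to the identity function $f(\lambda)=\lambda$. Direct substitution gives
\begin{equation*}
\mathcal{A}^\lambda f(\lambda) = \beta(a-\lambda) + \lambda\,\mathbb{E}[Y].
\end{equation*}
Because $f(\lambda)=\lambda$ has linear growth and $\lambda_t$ has finite exponential moments under \eqref{dodzal} (enough to ensure that the relevant local martingale is a true martingale), Dynkin's formula applied between $0$ and $t$ yields that $m(t):=\mathbb{E}_{(u,\lambda)}[\lambda_t]$ solves the linear ODE
\begin{equation*}
m'(t) = a\beta - (\beta - \mathbb{E}[Y])\,m(t), \qquad m(0)=\lambda.
\end{equation*}
Since $\beta > \mathbb{E}[Y]$, the unique solution is $m(t) = \frac{a\beta}{\beta - \mathbb{E}[Y]} + \Bigl(\lambda - \frac{a\beta}{\beta - \mathbb{E}[Y]}\Bigr) e^{-(\beta-\mathbb{E}[Y])t}$, which converges exponentially to $m_\infty = \frac{a\beta}{\beta - \mathbb{E}[Y]}$ as $t\to\infty$.

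Combining everything, $\mathbb{E}_{(u,\lambda)}[X_t]/t = u/t + c - \mathbb{E}[U]\cdot \mathbb{E}_{(u,\lambda)}[N_t]/t$, and the middle term vanishes while $\mathbb{E}_{(u,\lambda)}[N_t]/t \to m_\infty$, giving the stated limit $c - \frac{a\beta\,\mathbb{E}[U]}{\beta-\mathbb{E}[Y]}$. The only delicate step is justifying that Dynkin's formula actually applies to the unbounded $f(\lambda)=\lambda$, i.e., that the local martingale obtained from the generator is a true martingale on $[0,t]$. I would handle this by a standard localisation via stopping times $\tau_n = \inf\{s:\lambda_s \geq n\}$, taking $n\to\infty$ and using monotone/dominated convergence together with the a priori bound on $\mathbb{E}[\lambda_t]$ obtained from the cluster representation (each cluster contributes an intensity bump with finite mean total mass under \eqref{assh}).
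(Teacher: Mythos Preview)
Your proposal is correct and follows essentially the same route as the paper: both reduce the problem to evaluating $\int_0^t \mathbb{E}_\lambda[\lambda_s]\,\mathrm{d}s$ and then use the explicit formula $\mathbb{E}_\lambda[\lambda_s]=\frac{a\beta}{\beta-\mathbb{E}[Y]}+\bigl(\lambda-\frac{a\beta}{\beta-\mathbb{E}[Y]}\bigr)e^{-(\beta-\mathbb{E}[Y])s}$. The only cosmetic differences are that the paper applies the generator of $(X_t,\lambda_t,t)$ to $f(x,\lambda,t)=x$ instead of invoking Wald plus the compensator, and it cites \cite{Cui.2020} for the mean-intensity formula rather than deriving it from the ODE as you do; your derivation is slightly more self-contained.
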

    \begin{proof}
        The function $f(x,\lambda,t) = x$ is in the domain of the generator and satisfies $\mathcal{A}f(x,\lambda,t) = c - \lambda \mathbb{E}[U]$. Therefore, we have that $ \mathbb{E}_{(u,\lambda)}\left[X_t\right] = u + ct - \mathbb{E}[U] \int_0^t \mathbb{E}_{\lambda}[\lambda_s] \, \mathrm{d}s$. By \cite{Cui.2020}, we have that $\mathbb{E}_{\lambda} [\lambda_s] = \frac{\beta a}{\beta-\mathbb{E}[Y]} + \left( \lambda - \frac{\beta a}{\beta - \mathbb{E}[Y]}\right) e^{- s(\beta - \mathbb{E}[Y])}$. Consequently, $\int_0^t \mathbb{E}_{\lambda}[\lambda_s] \, \mathrm{d}s = \frac{\beta a}{\beta-\mathbb{E}[Y]} t + o(t)$. Using this, we get \[ \lim_{t \to \infty}  \frac{\mathbb{E}_{(u,\lambda)} \left[X_t\right]}{t} = c -\mathbb{E}[U]\frac{\beta a}{\beta-\mathbb{E}[Y]}  \]
      which completes the proof.
    \end{proof}

By this result, we propose the following net profit condition.
\begin{assumption}\label{assumption1}
    From now on we assume that the net profit condition \[c > \frac{a\beta \mathbb{E}[U]}{\beta - \mathbb{E}[Y]}\] holds. Further, we assume that there exist some positive $s_Y$ such that the moment generating function
    \[M_Y(s):=\mathbb{E}\left[e^{sY}\right] < +\infty \, \forall s< s_Y, \quad\text{and}\quad \lim_{s \to s_Y} M_Y(s) = +\infty.\]
\end{assumption}

\subsection{Steps of the proof of Cram\'er-Lundberg asymptotics}
Our main goal of this section is to prove (under additional assumptions) for the Hawkes process with the intensity \eqref{lambdamarkov} so-called Cram\'er Lundberg asymptotics, that is,
that there exists an adjustment coefficient $R$ solving Lundberg equation
formulated later (in \eqref{adjustment2})
and constant $C^\lambda>0$, depending on $\lambda$, such that
\[ \lim_{u \to \infty} \psi(u,\lambda)e^{Ru} = C^\lambda;\]
see Theorem \ref{cramerasymptotics}.

To prove this statement we divide the whole proof into the following steps:
\begin{enumerate}
\item in next subsection we prove in Theorem \ref{thm:posrecc} that the intensity process $\{\lambda_t\}_{t\geq 0}$ is positive Harris recurrent
and, in Theorem \ref{thm:light_tailed_recurrence},  that the corresponding recurrence times $\{\phi_i\}_{i \in \mathbb N}$ are light-tailed;
\item  later, in Definition \ref{defalpha}, we introduce the functions $\alpha(r)$ and $\theta(r)$
as special solutions of equations \eqref{firsteq}-\eqref{maineq};
\item the adjustment coefficient $R>0$ is defined as a maximal solution of the Lundberg equation
\[\theta(R)=0;\]
\item we introduce, in Definition \ref{newmeasuredef} using Theorems \ref{localmartingale1} and \ref{localmartingale2}, the new exponential measure
\[\left.\frac{\mathrm{d}\mathbb{Q}^{(R)}}{\mathrm{d}\mathbb{P}}\right|_{\mathcal{F}_{t}}=M_t:=\exp\left(Ru+\alpha(R)\lambda) \exp(-RX_t-\alpha(R)\lambda_t\right); \] 
\item in Lemma \ref{driftpostive} we prove that under the new measure $\mathbb{Q}^{(R)}$ ruin occurs almost surely and the intensity $\{ \lambda_t\}_{t \geq 0}$ remains positive Harris recurrent (see Lemma \ref{newmeasurerecc});
\item we introduce
\[B(\mathrm{d}x) := e^{Rx} \tilde{B}(\mathrm{d}x)\quad\text{for}\quad  \tilde{B}(x):= e^{Rx}\mathbb{P}_{(u,\lambda)} \left[ \phi_1 < +\infty, u-X_{\phi_1}  \leq x\right]\]
and
\[p(u,x):= \mathbb{P}_{(u,\lambda)}\left[\tau \leq \phi_1, \, \left\vert \phi_1 < +\infty, X_{\phi_1} =u-x \right.\right]\]
for the ruin time $\tau$ defined in \eqref{ruintime} and first reccurence to $\lambda$ epoch $\phi_1$ of $\{\lambda_t\}_{t\geq 0}$ such that
process $\{X_t\}_{t \geq 0}$ is getting below initial starting position $u$; now using classical one-step analysis we can prove that
\[Z(u)=\psi(u) e^{Ru}\]
satisfies the following renewal equation (see \eqref{renewalequation2})
\begin{equation*}
    Z(u)= \int_0^u Z(u-x) (1-p(u,x)) B(\mathrm{d}x) + z(u),
\end{equation*}
where
\[z(u)= e^{Ru}\mathbb{P}_{(u,\lambda)}\left[\tau\leq \phi_1, \tau < +\infty \right];\]
\item to show that $\lim_{u\to+\infty} Z(u)$ exists and is finite, in the last step of the proof we use
Theorem 2 of \cite{Schmidli.1997} and the first step of the proof to prove that $z(u)$ and $\int_0^u p(u,x) B(\mathrm{d}x)$ are directly Riemann integrable.
\end{enumerate}

We start from the analysis of the behaviour of the Markovian intensity process $\{\lambda_t\}_{t\geq 0}$.

\subsection{Harris recurrence of the intensity process}\label{sec:recurrence}
In this section, we investigate the behaviour of the intensity of the Markovian marked Hawkes process. Our goal is to show that the intensity process is positive Harris recurrent (see Theorem \ref{thm:posrecc})
and that the corresponding recurrence times are light-tailed (see Theorem \ref{thm:light_tailed_recurrence}), both properties are needed to determine the asymptotic behaviour of the ruin probability.

To define these properties properly, we consider a right-continuous, time-homogeneous, strong Markov process $\{ Z_t \}_{t \geq 0}$ on $(\mathcal{E},\mathcal{B}(\mathcal{E}))$. Here, $\mathcal{E}$ denotes a locally compact, separable metric space and $\mathcal{B}(\mathcal{E})$ its Borel $\sigma$-algebra.
\begin{definition}
    The process $\{ Z_t \}_{t \geq 0}$ is called Harris recurrent if there exists a $\sigma$-finite measure $\varphi$ on $\mathcal{B}(\mathcal{E})$ such that $\varphi(B)>0 \Rightarrow \mathbb{P}_z\left[ \eta_B=\infty\right]=1$ for all initial values $z$ and $B\in\mathcal{B}(\mathcal{E})$, where
    \begin{equation}\label{eta}\eta_B:= \int_0^\infty I_{\left\lbrace Z_t \in B\right\rbrace } \mathrm{d}t\end{equation}
    denotes the occupation measure of the process $\{ Z_t \}_{t \geq 0}$. It is called positive Harris recurrent if it is Harris recurrent with finite invariant measure $\pi$.
\end{definition}

To show that our intensity process satisfies this property, we need the following definitions of a continuous component and a $T$-process as in Section 3.2 of \cite[pp. 495-496]{Meyn.1993}.

\begin{definition}
    Let $\{ Z_t \}_{t \geq 0}$ be our strong Markov process and $\sigma_1, \sigma_2, \ldots$ an i.i.d. sequence of positive random variables with distribution $F$ and independent of $\{ Z_t \}_{t \geq 0}$. Then, we define the embedded Markov chain $Y_n:=Z_{\sigma_1+ \cdots + \sigma_n}$ with one-step transition probability $K_F(z,A):= \int_0^\infty \mathbb{P}_z\left[Z_t \in A \right] F(\mathrm{d}t)$ for all $A \in \mathcal{B}(\mathcal{E})$. A kernel $T: (\mathcal{E},\mathcal{B}(\mathcal{E}))\to \mathbb{R}_+$ is called a continuous component of $K_F$ if $K_F(x,A) \geq T(x,A)$ for all $x$ and $A$, and for fixed $A \in \mathcal{B}(\mathcal{E})$, the function $T(\cdot, A)$ is lower semi-continuous. We say $T$ is non-trivial if, for all $x\in \mathcal{E}$, we have that $T(x,\mathcal{E})>0$.
\end{definition}
A special case of such an embedded Markov chain is the resolvent chain, whose transition kernel is given by $R_\gamma (x,A) := \int_0^\infty \mathbb{P}_x\left[Z_t \in A \right] e^{-\gamma t} \mathrm{d}t$, i.e. where $\sigma_1 \sim {\rm Exp}(\gamma)$ for the exponential random variable ${\rm Exp}(\gamma)$ with the parameter $\gamma>0$.
\begin{definition}
    The process $\{ Z_t \}_{t \geq 0}$ is called $T$-process if there is a probability distribution $F$ such that $K_F$ admits a non-trivial continuous component $T$.
\end{definition}

\begin{definition}
    Recall that $\eta_B$ defined in \eqref{eta} is the occupation measure. Let $\varphi$ be a $\sigma$-finite measure. If $\varphi(B)>0 \Rightarrow \mathbb{E}_z\left[ \eta_B\right] >0$ for all initial values $z$ and all $B \in \mathcal{B}(\mathcal E)$, then $\{ Z_t \}_{t \geq 0}$ is called $\varphi$-irreducible.
\end{definition}

\begin{definition}
    A process $\{ Z_t \}_{t \geq 0}$ is called bounded in probability on average if for every initial value $z$ and $\varepsilon>0$ there is a compact set $K$ such that \[ \liminf_{t \to \infty} \frac{1}{t} \int_0^t \mathbb{P}_z\left[Z_s \in K \right]\mathrm{d}s \geq 1-\varepsilon.\]
\end{definition}
These three properties are related to positive Harris recurrence by Theorem 3.2 of \cite{Meyn.1993}, which states the following.

\begin{thm}\label{thm:Harris_meyn}
    Suppose that $\{ Z_t \}_{t \geq 0}$ is a $\varphi$-irreducible $T$-process. Then $\{ Z_t \}_{t \geq 0}$ is positive Harris recurrent if and only if it is bounded in probability on average.
\end{thm}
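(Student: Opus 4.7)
The plan is to handle the two directions of the equivalence separately. The forward implication is rather soft and follows from tightness of the normalised invariant measure combined with the ratio ergodic theorem, while the reverse implication requires the petite-set machinery that is specific to $T$-processes.

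For positive Harris recurrence implying boundedness in probability on average, the argument is as follows. Positive Harris recurrence gives a finite invariant measure $\pi$, which I would normalise to a probability measure. Since $\mathcal{E}$ is locally compact and separable, hence Polish, $\pi$ is tight: for every $\varepsilon>0$ there is a compact $K_\varepsilon$ with $\pi(K_\varepsilon)\geq 1-\varepsilon$. The ratio ergodic theorem for Harris recurrent processes, applied to the bounded indicator $I_{K_\varepsilon}$, then yields
\[
\frac{1}{t}\int_0^t I_{\{Z_s\in K_\varepsilon\}}\,\mathrm{d}s \;\longrightarrow\; \pi(K_\varepsilon)\qquad \mathbb{P}_z\text{-a.s.}
\]
for every starting point $z$. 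Since the integrand is bounded by $1$, dominated convergence transfers this to expectations, which is exactly the bounded in probability on average condition.

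For the converse direction, I would exploit the $T$-process and $\varphi$-irreducibility hypotheses. The continuous component of $K_F$ together with $\varphi$-irreducibility implies, by standard arguments in the theory of Markov processes, that every compact subset of $\mathcal{E}$ is petite with respect to some sampled chain $K_F$. Boundedness in probability on average then supplies a compact set $C$ that is visited with strictly positive average occupation frequency from any starting point. A petite set visited with positive average frequency admits a Nummelin-type splitting on the embedded chain, and Kac's formula applied to the return times of this atom delivers a finite invariant measure for the continuous-time process. Combined with $\varphi$-irreducibility, this is enough to promote recurrence to Harris recurrence and to establish positivity.

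The main obstacle is the reverse implication, and specifically the passage from averaged occupation of a compact set to a regenerative structure producing a finite invariant measure. The $T$-process assumption is indispensable here: it is precisely what endows compact sets with the petite property, and without it no amount of averaged recurrence yields information about the transition kernel. Since all of these ingredients are worked out in detail in \cite{Meyn.1993}, the present paper simply quotes the equivalence from that reference rather than re-deriving it.
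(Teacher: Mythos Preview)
Your reading is correct: the paper does not prove this statement at all but simply imports it verbatim as Theorem 3.2 of \cite{Meyn.1993}. Your sketch of the two directions is a reasonable outline of the argument in that reference, and you have already identified that the present paper quotes rather than re-derives the result, so there is nothing further to compare.
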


Now, we want to show that our process $\{\lambda_t\}_{t\geq0}$ satisfies all conditions of Theorem \ref{thm:Harris_meyn}. It is a time-homogeneous strong Markov process with right-continuous paths defined on $(\mathbb R, \mathcal{B}(\mathbb R))$ . The space $\mathbb{R}$ is a locally compact and separable metric space. The next point is to show that the process is a $T$-process.
\begin{lem}
    The intensity process $\{\lambda_t\}_{t\geq0}$ is a $T$-process.
    \end{lem}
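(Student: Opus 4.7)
The plan is to use the resolvent chain as the embedded Markov chain and construct a continuous component $T$ by conditioning on the event that no jump has occurred. Concretely, I would take $F(\mathrm{d}t) = \gamma e^{-\gamma t}\,\mathrm{d}t$ so that $K_F$ equals the resolvent $R_\gamma$, and define
\[
T(\lambda, A) := \int_0^\infty \mathbb{P}_\lambda\left[\lambda_t \in A,\, N_t = 0\right] \gamma e^{-\gamma t}\,\mathrm{d}t.
\]
The minorization $R_\gamma(\lambda, A) \geq T(\lambda, A)$ is then automatic, and only non-triviality of $T$ and lower semi-continuity of $T(\cdot, A)$ for each fixed Borel $A$ have to be proved.

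On the event $\{N_t = 0\}$ the intensity decays deterministically along the flow $\varphi_t(\lambda) := a + (\lambda - a) e^{-\beta t}$, and integrating this deterministic intensity yields
\[
\mathbb{P}_\lambda\left[N_t = 0\right] = \exp\left(-at - \frac{(\lambda-a)(1 - e^{-\beta t})}{\beta}\right).
\]
Substituting and performing the change of variables $u = \varphi_t(\lambda)$, which is a smooth bijection from $(0,\infty)$ onto $(a, \lambda)$, I would rewrite $T(\lambda, A)$ in the form
\[
T(\lambda, A) = \int_a^\lambda \mathbf{1}_A(u)\, g(u,\lambda)\,\mathrm{d}u
\]
for a strictly positive density $g$ that is jointly continuous on $\{a < u < \lambda\}$. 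Non-triviality of $T$ is then immediate, since $T(\lambda, \mathbb{R}) = T(\lambda, (a, \lambda)) > 0$ for every $\lambda > a$, and the boundary case $\lambda = a$ is handled separately because the flow degenerates to a constant and $T(a, \{a\}) > 0$ can be read off directly.

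For the lower semi-continuity of $T(\cdot, A)$ at a fixed Borel $A$, the argument reduces to passing a limit $\lambda_n \to \lambda$ inside the integral: $g(u, \lambda_n) \to g(u, \lambda)$ pointwise, the upper endpoint $\lambda_n$ moves continuously, and a dominated-convergence argument in fact delivers actual continuity of $T(\cdot, A)$, which is stronger than needed. The main technical obstacle, and the step I would spend most care on, is exhibiting a uniform integrable majorant for $g(\cdot, \lambda_n)$ near the endpoint $u = a$, where an explicit calculation shows that $g$ has an integrable power singularity with exponent $(a+\gamma)/\beta - 1$; for $\gamma$ chosen large enough this singularity is mild, but even for small $\gamma$ integrability is preserved and a uniform-in-$n$ dominating function is produced by elementary estimates on the explicit formula for $g$.
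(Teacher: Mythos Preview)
Your proposal is correct and gives a genuinely different argument from the paper's. The paper does not minorize $R_\gamma$ at all: it shows that the \emph{full} resolvent $R_1(\lambda,A)=\int_0^\infty \mathbb{P}_\lambda[\lambda_t\in A]\,e^{-t}\,\mathrm{d}t$ is itself continuous in $\lambda$, by rewriting it as the expected occupation time of the process killed at rate $1$ and then quoting two results from Davis's PDMP monograph (Theorems~31.9 and~32.2) that make this expected occupation functional absolutely continuous in the starting point. Thus $R_1$ serves directly as the non-trivial continuous component, with no explicit computation of flows or densities.

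Your route---condition on $\{N_t=0\}$, push the deterministic flow through the change of variables, and get an explicit density $g(u,\lambda)$ on $(a,\lambda)$---is more elementary and entirely self-contained; it avoids the dependence on Davis's general theorems at the cost of a page of calculus. Both buy the same conclusion. One small caution: your aside about handling $\lambda=a$ separately via $T(a,\{a\})>0$ is unnecessary here (the process lives strictly in $(a,\infty)$ under the paper's standing assumption $\lambda>a$) and would in fact \emph{break} lower semi-continuity if $a$ were included in the state space, since $T(\lambda,\{a\})=0$ for all $\lambda>a$. Simply drop that case; on $(a,\infty)$ your dominated-convergence argument goes through cleanly, because the factor $(\lambda_n-a)^{-(a+\gamma)/\beta}$ stays bounded once $\lambda_n$ is bounded away from $a$, and $(u-a)^{(a+\gamma)/\beta-1}$ is locally integrable for every $\gamma>0$.
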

    \begin{proof}
        For this, we show that the resolvent kernel $R_1(\lambda,A) = \int_0^\infty \mathbb{P}_\lambda\left[\lambda_t \in A\right] e^{-t} \mathrm{d}t$ is continuous in $\lambda$, for every fixed $A\in \mathcal{B}(\mathbb{R})$. For this, observe that by Tonelli's theorem, we can interchange expectation and integration to get that \[ R_1 (\lambda,A) = \mathbb{E}_\lambda \left[ \int_0^\infty I_A(\lambda_t) e^{-t} \mathrm{d}t\right]=:V(\lambda).\]
        Further, by Theorem 31.9 of \cite{Davis.1993} we know that \[V(\lambda)= \tilde V(\lambda) := \mathbb{E}_\lambda \left[ \int_0^\infty I_A(\tilde \lambda_t) \mathrm{d}t\right], \] where $\{ \tilde \lambda_t\}_{t\geq 0}$ denotes the process $\{\lambda_t\}_{t\geq0}$ but killed with constant rate $1$.  The function $\tilde V(\lambda)$ is bounded by $1$, and the function $l(x):= I_A(x)$ is measurable and integrable. Therefore, we have by Theorem 32.2 of \cite{Davis.1993}, that $\tilde V(\lambda)$ is absolutely continuous. The kernel $R_1$ is non-trivial since $R_1(\lambda, \mathbb{R}_+) =1$ for all $\lambda \in \mathbb{R}_+$. Therefore, $R_1$ serves as non-trivial component.
    \end{proof}

To show that the intensity process is $\varphi$-irreducible, we have to identify a suitable $\sigma$-finite measure $\varphi$. For this, we will show in the first step that the Markovian Hawkes intensity converges in distribution to a stationary probability distribution $\nu$.
To do so, we introduce the process $\mu_t= a+ \sum_{i=1}^{N_t} Y_i e^{- \beta (t-T_i)}$, i.e. our Hawkes intensity with initial condition $\lambda_0=a$. By \cite[p. 133]{Bremaud.2002}, the distribution of $\mu_t$ converges weakly against a stationary distribution as $t \to \infty$.
If we can show that, independent of the initial intensity $\lambda$, the process $\{ \lambda_t-\mu_t\}_{t \geq 0}$ converges in probability to $0$. Then, we have by Slutsky's theorem that $\lambda_t = \lambda_t - \mu_t + \mu_t $ converges in distribution to $\nu$ too.
\begin{lem}
  Let $\lambda > a$ be arbitrary but fixed. Then $\lim_{t \to \infty} \mathbb{P}_{\lambda} \left[\left\vert\lambda_t-\mu_t\right\vert > \varepsilon\right]=0 $ for all $\varepsilon>0$.
  \end{lem}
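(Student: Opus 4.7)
The plan is to realise $\lambda_t$ and $\mu_t$ jointly on one probability space via a thinning coupling so that $\lambda_t\geq\mu_t$ pathwise, and then to control $\mathbb E_\lambda[\lambda_t-\mu_t]$ using the first-moment formula of \cite{Cui.2020} already invoked in the proof of Lemma~\ref{lem:mean_surplus}.

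To build the coupling, I would take a Poisson random measure $M(\mathrm{d}s,\mathrm{d}x,\mathrm{d}y)$ on $\mathbb R_+^3$ with mean measure $\mathrm{d}s\otimes\mathrm{d}x\otimes F_Y(\mathrm{d}y)$ and define $\lambda$ and $\mu$ as the unique strong solutions of the corresponding thinning SDEs: a point $(s,x,y)$ is accepted as a jump of $\lambda$ precisely when $x\leq\lambda_{s-}$, and as a jump of $\mu$ precisely when $x\leq\mu_{s-}$, in each case with common shock $y$. Both solutions then have the prescribed Markovian Hawkes marginals. Since $\lambda_0=\lambda>a=\mu_0$ and every shared jump carries the same mark, an induction on the successive jump epochs of $\lambda$ (using that the gap $\lambda_t-\mu_t$ decays deterministically at rate $\beta$ between jumps and increases only at the extra jumps of $\lambda$) gives $\lambda_t\geq\mu_t$ almost surely for every $t\geq 0$.

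Evaluating the closed-form mean of \cite{Cui.2020} recalled in the proof of Lemma~\ref{lem:mean_surplus} at the two initial conditions and subtracting yields
\[\mathbb E_\lambda[\lambda_t]-\mathbb E_a[\mu_t]=(\lambda-a)\,e^{-(\beta-\mathbb E[Y])\,t},\]
which tends to $0$ as $t\to\infty$ because $\beta>\mathbb E[Y]$ by \eqref{dodzal}. Under the coupling this is exactly $\mathbb E_\lambda\bigl[|\lambda_t-\mu_t|\bigr]$, so Markov's inequality delivers
\[\mathbb P_\lambda\bigl[|\lambda_t-\mu_t|>\varepsilon\bigr]\leq \frac{(\lambda-a)\,e^{-(\beta-\mathbb E[Y])\,t}}{\varepsilon}\xrightarrow[t\to\infty]{} 0,\]
which is the claim. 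The only delicate point is to set up the thinning construction so that both marginals really have the prescribed Markovian Hawkes law; once the common Poisson measure is fixed, the pathwise comparison and the moment bound are essentially a one-line calculation.
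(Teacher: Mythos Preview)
Your argument is correct. The thinning construction from a common Poisson random measure is standard for Hawkes processes and does deliver the pathwise ordering $\lambda_t\geq\mu_t$; once you have that, the explicit first-moment formula from \cite{Cui.2020} and Markov's inequality finish the job in one line, with the bonus of an explicit exponential rate $(\lambda-a)e^{-(\beta-\mathbb E[Y])t}$.

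The paper uses essentially the same coupling, phrased through the cluster representation: it splits $N_t=N_t^\mu+M_t$, where $M_t$ counts the offspring of the excess initial intensity $\lambda-a$, and writes $\lambda_t-\mu_t=(\lambda-a)e^{-\beta t}+\sum_{i=1}^{M_t}\tilde Y_i e^{-\beta(t-T_i^M)}$ explicitly. The difference is in how $\mathbb E[|\lambda_t-\mu_t|]\to 0$ is established: the paper bounds the sum by the integrable random variable $\sum_{i=1}^{M_\infty}\tilde Y_i$ (finiteness of the subcritical Galton--Watson progeny) and applies dominated convergence, which also yields almost-sure convergence $|\lambda_t-\mu_t|\to 0$. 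Your route bypasses the branching argument entirely by subtracting the two closed-form means; this is shorter and gives a quantitative rate, while the paper's version gives the slightly stronger a.s.\ statement. Both are valid and the underlying coupling is the same.
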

  \begin{proof}
      To show this, we want to use Markov's inequality. Therefore, we are interested in the behaviour of $\mathbb{E}[ \left\vert \lambda_t-\mu_t \right\vert ]$ as $t \to \infty$. For this, we take a look at the change of the intensity by the increase of the initial value by $\lambda-a$.
      For fixed $\lambda$, we can decompose the corresponding counting process $\{N_t\}_{t\geq 0}$ as \[N_t= N^\mu_t + M_t,\] where $ M_t$ counts all jumps due to the initial increase by $\lambda-a$ and $N^\mu_t$ is a marked Hawkes process with intensity $\mu_t$.

      In the time interval $(0,\infty)$, the initial increase by $\lambda-a$ will cause $$Z:={\rm Poi}\left((\lambda-a)\int_0^\infty e^{-\beta t} \mathrm{d}t\right)= {\rm Poi} \left(\frac{\lambda-a}{\beta}\right)$$ jumps of the marked Hawkes process. We will call these jumps 'children'. Each child increases the intensity by a generic $\tilde Y$, hence triggers ${\rm Poi}(\frac{\tilde Y}{\beta})$ additional jumps, i.e. 'grandchildren'. These grandchildren cause new jumps again, so we get a branching structure. We call the collection of all jumps caused by $\lambda-a$ offspring. Since $\mathbb{E}\left[ \int_0^\infty Y e^{-\beta t} \mathrm{d}t \right] = \frac{\mathbb{E}\left[Y\right]}{\beta}< 1$, we have that the number of jumps in such a cluster is an integrable random variable, see \cite{Basrak.2019}. The random variable $M_t$ corresponds to the number of offspring due to the increase by $\lambda-a$, which appeared up to time $t$ and converges almost surely to the integrable random variable $M_\infty$, which corresponds to the size of the cluster caused by the additional initial intensity.

      By this, we have that \[\left\vert \lambda_t - \mu_t \right\vert = (\lambda -a)e^{-\beta t} + \sum_{i=1}^{M_t} \tilde Y_i e^{-\beta (t- T^M_i)} \leq (\lambda -a)e^{-\beta t} + \sum_{i=1}^{M_\infty} \tilde Y_i, \]  where $\{ T_i^M\}_{i \leq M_\infty}$ are the jump times of the counting process $\{M_t\}_{t\geq 0}$. The upper bound is integrable. Hence, by dominated convergence, we have that \begin{multline*}
          \limsup_{t \to \infty}\mathbb{E}_\lambda \left[\left\vert \lambda_t-\mu_t\right\vert\right] \leq \mathbb{E}_\lambda \left[ \limsup_{t \to \infty}\;(\lambda-a)e^{-\beta t} + \sum_{i=1}^{M_\infty} \tilde Y_i e^{-\beta (t-T^M_i)}\right] \\= \mathbb{E}_\lambda \left[ \sum_{i=1}^{M_\infty} \limsup_{t \to \infty} \tilde Y_i e^{-\beta (t-T^M_i)}\right]=0.
      \end{multline*}
The statement follows from Markov's inequality.
  \end{proof}

\begin{thm}
    The Markovian Hawkes intensity converges in distribution to the stationary distribution $\nu$.\end{thm}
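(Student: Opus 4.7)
The plan is to combine the two ingredients just established with the known stationarity result for the baseline-initialised Hawkes intensity, using Slutsky's theorem to transfer convergence in distribution from $\mu_t$ to $\lambda_t$.

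First, I would recall that the process $\{\mu_t\}_{t\geq 0}$ with $\mu_0 = a$ has been shown in \cite{Bremaud.2002} (the reference cited earlier on p.~133) to converge weakly, as $t\to\infty$, to a stationary probability distribution, which we call $\nu$. This is precisely the regime where the subcriticality condition $\mathbb{E}[Y]/\beta < 1$ from \eqref{dodzal} is used, so no extra assumption is needed here. I would state this as the first building block, independent of the chosen initial value $\lambda > a$.

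Second, I would invoke the previous lemma, which says that for any fixed $\lambda > a$ and every $\varepsilon > 0$,
\[
\lim_{t\to\infty} \mathbb{P}_\lambda\bigl[\,|\lambda_t - \mu_t| > \varepsilon\,\bigr] = 0,
\]
i.e.\ $\lambda_t - \mu_t \to 0$ in $\mathbb{P}_\lambda$-probability. Writing the trivial decomposition $\lambda_t = \mu_t + (\lambda_t - \mu_t)$, Slutsky's theorem then yields that $\lambda_t \convdistr \nu$ under $\mathbb{P}_\lambda$. Since $\lambda > a$ was arbitrary, this gives the convergence claimed in the theorem regardless of the starting intensity.

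There is essentially no hard step: the technical work was absorbed into the previous lemma, which controlled the offspring caused by the excess initial intensity $\lambda - a$ and used dominated convergence together with subcriticality of the cluster. The only mild subtlety is that the stationary distribution $\nu$ obtained via $\mu_t$ must not depend on the coupling used to produce $\lambda_t$ and $\mu_t$ on the same probability space; this is automatic because $\nu$ is defined as the weak limit of the law of $\mu_t$ alone, which is intrinsic to the Hawkes dynamics with $\lambda_0 = a$. Hence the proof reduces to citing Brémaud--Massoulié, applying the preceding lemma, and invoking Slutsky.
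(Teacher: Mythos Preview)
Your proposal is correct and follows essentially the same approach as the paper: cite Brémaud--Massoulié for $\mu_t \convdistr \nu$, invoke the preceding lemma for $\lambda_t - \mu_t \to 0$ in probability, and apply Slutsky's theorem to the decomposition $\lambda_t = (\lambda_t - \mu_t) + \mu_t$.
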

    \begin{proof}
        Let $\{\lambda_t\}_{t \geq 0}$ and $\{\mu_t\}_{t \geq 0}$ be as before. By the previous lemma we have that $\lambda_t-\mu_t \to 0$ in probability, and by \cite{Bremaud.2002}, we get that $\mu_t \to \nu$ in distribution. Slutsky's theorem gives us that \[\lambda_t = \lambda_t-\mu_t +\mu_t \convdistr 0+ Z,\] where
        $Z \sim
        \nu$.
    \end{proof}

 To show that the intensity process is $\nu$-irreducible, we still need some smoothness of the stationary distribution $\nu$.

\begin{lem}\label{lem:absolutely_continuous}
    The stationary distribution $\nu$ is absolutely continuous with respect to the Lebesgue measure.\end{lem}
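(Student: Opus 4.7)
My plan is to show that $\nu(A) = 0$ for every Borel set $A \subset [a,\infty)$ of Lebesgue measure zero by exploiting the piecewise-deterministic structure of $\{\lambda_t\}_{t \geq 0}$: between jumps the intensity traces a $C^1$ diffeomorphism, so the occupation time of any Lebesgue null set is zero, and stationarity then converts this into a statement about $\nu$ itself.

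First, I would rule out an atom of $\nu$ at the floor value $a$. Observe that $\lambda_s = a$ can only occur when $\lambda_0 = a$ and no jump has yet taken place, because each jump strictly raises the intensity above $a$ and the exponential decay only approaches $a$ asymptotically. Consequently, under stationarity and for any $t > 0$,
\[\nu(\{a\}) = \mathbb{P}_\nu(\lambda_t = a) = \mathbb{P}_\nu(\lambda_0 = a,\, N_t = 0) = \nu(\{a\})\, e^{-at},\]
where the last equality uses that, given $\lambda_0 = a$, the first jump is exponentially distributed with rate $a$. This forces $\nu(\{a\}) = 0$, so under $\mathbb{P}_\nu$ we have $\lambda_s > a$ for every $s \geq 0$ almost surely.

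Next, fix a Borel set $A \subset [a,\infty)$ of Lebesgue measure zero and let $0 = T_0 < T_1 < T_2 < \ldots$ denote the jump times of $\{N_t\}_{t \geq 0}$. On each inter-jump interval $[T_k, T_{k+1})$ the deterministic flow $s \mapsto \lambda_s = a + (\lambda_{T_k} - a) e^{-\beta(s - T_k)}$ is $C^1$ and strictly decreasing with derivative $-\beta(\lambda_s - a) < 0$. Since $\lambda_{T_k} > a$ almost surely by the previous step, the factor $1/(\beta(\lambda_s - a))$ is bounded on this interval, so the change of variables $u = \lambda_s$ yields
\[\int_{T_k}^{T_{k+1}} I_A(\lambda_s)\, \mathrm{d}s = \int_{\lambda_{T_{k+1}-}}^{\lambda_{T_k}} \frac{I_A(u)}{\beta(u-a)}\, \mathrm{d}u = 0.\]
Summing over the almost surely finite collection of jumps in $[0,T]$ gives $\int_0^T I_A(\lambda_s)\, \mathrm{d}s = 0$ almost surely under $\mathbb{P}_\nu$. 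By stationarity and Tonelli's theorem,
\[T\, \nu(A) = \int_0^T \mathbb{P}_\nu(\lambda_s \in A)\, \mathrm{d}s = \mathbb{E}_\nu\!\left[\int_0^T I_A(\lambda_s)\, \mathrm{d}s\right] = 0,\]
so $\nu(A) = 0$ and hence $\nu$ is absolutely continuous with respect to Lebesgue measure.

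The delicate point is the atom argument in the first step: if $\nu(\{a\})$ were positive, the time spent at $a$ before the first jump would contribute $\nu(\{a\}) \cdot I_A(a)$ to the right-hand side and the change-of-variables bound would fail. Once that atom is excluded, the remaining ingredients — the one-dimensional change of variables, Tonelli, and finiteness of the jump count on compact intervals — are routine.
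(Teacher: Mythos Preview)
Your argument is correct and self-contained. The paper, by contrast, does not prove this lemma at all: it simply invokes Proposition~1.9 of L\"opker (2013), which covers absolute continuity of stationary distributions for a broad class of piecewise-deterministic Markov processes with continuous downward flow.

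Your route is genuinely different from the paper's citation-based approach. You exploit directly the two structural features of the Markovian Hawkes intensity: the deterministic inter-jump flow is a $C^1$ diffeomorphism of the state variable, and the only possible fixed point of that flow (the level $a$) carries no $\nu$-mass. The first observation gives zero occupation time of Lebesgue-null sets on each inter-jump interval via change of variables, and the second ensures the Jacobian factor $1/(\beta(\lambda_s-a))$ stays finite on each such interval. What your approach buys is transparency: a reader sees exactly why the continuous drift prevents mass from accumulating on null sets, without having to consult an external reference or verify its hypotheses. The cited result in L\"opker covers more general PDMPs and is therefore quicker to invoke once its applicability is checked, but your direct argument is fully adequate here and arguably more informative in this specific setting.
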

    \begin{proof}
        This is due to Proposition 1.9 of \cite{Lopker.2013}.
    \end{proof}
Let $B \in \mathcal{B}(\mathbb R)$ and define $\nu_t(B):= \frac{1}{t}\int_0^t \mathbb{P}_\lambda\left[\lambda_s \in B\right]\mathrm{d}s$.
\begin{lem}\label{lem:convergence_nut}
    The family of measures $\{ \nu_t\}_{t \geq 0}$
    converges weakly to the stationary measure $\nu$ as $t\to \infty$.\end{lem}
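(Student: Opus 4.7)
The plan is to observe that weak convergence of the Cesàro averages $\nu_t$ follows from the already-established weak convergence of the one-dimensional distributions $\mathbb{P}_\lambda[\lambda_s \in \cdot ] \convdistr \nu$, combined with a standard Cesàro mean argument under the Portmanteau characterization.

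More concretely, first I would verify that each $\nu_t$ is a genuine probability measure: since $\mathbb{P}_\lambda[\lambda_s \in \mathbb R]=1$, Fubini gives $\nu_t(\mathbb R)=1$, so weak convergence is the right notion. Next, by the Portmanteau theorem, to show $\nu_t \Rightarrow \nu$ it suffices to prove that $\int f\, \mathrm{d}\nu_t \to \int f\, \mathrm{d}\nu$ for every bounded continuous $f:\mathbb R \to \mathbb R$. By Fubini again,
\begin{equation*}
\int f\, \mathrm{d}\nu_t = \frac{1}{t}\int_0^t \mathbb{E}_\lambda\bigl[f(\lambda_s)\bigr]\,\mathrm{d}s.
\end{equation*}
The previous theorem states $\lambda_s \convdistr \nu$ as $s\to\infty$, so by Portmanteau applied to the bounded continuous $f$, the integrand $g(s):=\mathbb{E}_\lambda[f(\lambda_s)]$ satisfies $g(s)\to \int f\,\mathrm{d}\nu=:L$ as $s\to\infty$. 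Since $g$ is bounded by $\|f\|_\infty$, the classical Cesàro lemma yields $\frac{1}{t}\int_0^t g(s)\,\mathrm{d}s \to L$, which is exactly the desired convergence $\int f\,\mathrm{d}\nu_t \to \int f\,\mathrm{d}\nu$.

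There is no real obstacle here, and no delicate step, because all the heavy lifting (existence of $\nu$, convergence in distribution of $\lambda_s$ independently of the initial condition $\lambda$) has been carried out in the two lemmas and the theorem immediately preceding the statement. The only tiny technicality is justifying the interchange of $\mathbb E_\lambda$ and $\int_0^t (\cdot)\,\mathrm{d}s$, which is immediate from Tonelli applied to the nonnegative integrand $|f(\lambda_s)|\le \|f\|_\infty$, and the measurability of $s\mapsto \mathbb E_\lambda[f(\lambda_s)]$, which follows from the right-continuity of $\{\lambda_s\}_{s\ge 0}$ together with bounded convergence.
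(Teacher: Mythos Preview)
Your proof is correct and follows essentially the same approach as the paper: both arguments reduce to the Portmanteau theorem combined with a Ces\`aro averaging step. The only cosmetic difference is that the paper uses the open-set \emph{liminf} criterion of Portmanteau (showing $\liminf_{t\to\infty}\nu_t(U)\ge\nu(U)$ for open $U$ via a ``for every $\varepsilon>0$ there is $T$'' argument), whereas you use the bounded-continuous-function criterion and invoke the classical Ces\`aro lemma directly; the underlying idea is the same.
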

    \begin{proof}
        Since $\lambda_t$ converges in distribution to the absolutely continuous measure $\nu$, we have that for all open sets $U$ that $\liminf_{t \to \infty} \mathbb{P}_\lambda\left[\lambda_t \in U\right]\geq \nu(U).$ Consequently, for all $\varepsilon>0$ there is a $T>0$ such that for all $t \geq T$, it holds that $\mathbb{P}_\lambda\left[\lambda_t \in U\right]\geq \nu(U)-\varepsilon.$ Therefore, \begin{align*}
            &\liminf_{t \to \infty} \frac{1}{t}\int_0^t\mathbb{P}_\lambda\left[\lambda_s \in B\right]\mathrm{d}s \\ &\quad\geq\liminf_{t \to \infty} \frac{1}{t}\left(\int_0^T\mathbb{P}_\lambda\left[\lambda_s \in B\right] \mathrm{d}s + (t-T) (\nu(U)-\varepsilon)\right) \\ &\quad= \nu(U)-\varepsilon.
        \end{align*}
        If we let $\varepsilon$ tend to $0$, we see that $\liminf_{t\to\infty}\nu_t(B) \geq \nu(B)$ for all open sets $B$. By the Portmanteau theorem, this implies that $\{ \nu_t \}_{t \geq 0}$ converges weakly to the stationary measure $\nu$.
    \end{proof}
Now we are ready to show that our process is $\nu$-irreducible.
\begin{lem}
    The Markovian intensity process $\{\lambda_t\}_{t\geq 0}$ is $\nu$-irreducible.\end{lem}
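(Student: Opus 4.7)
The plan is to combine the weak convergence $\nu_t\to\nu$ from Lemma \ref{lem:convergence_nut} with the absolute continuity of $\nu$ from Lemma \ref{lem:absolutely_continuous}. Fix a Borel set $B\in\mathcal{B}(\mathbb{R})$ with $\nu(B)>0$ and an arbitrary starting intensity $\lambda>a$; the aim is to show
\[\mathbb{E}_\lambda[\eta_B]=\int_0^\infty \mathbb{P}_\lambda[\lambda_s\in B]\,\mathrm{d}s>0.\]

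I would first dispose of the easy case where $B=U$ is open. Applying the Portmanteau theorem to the weak convergence from Lemma \ref{lem:convergence_nut} gives
\[\liminf_{t\to\infty}\frac{1}{t}\int_0^t \mathbb{P}_\lambda[\lambda_s\in U]\,\mathrm{d}s=\liminf_{t\to\infty}\nu_t(U)\geq \nu(U)>0,\]
so that $\int_0^t \mathbb{P}_\lambda[\lambda_s\in U]\,\mathrm{d}s$ grows at least linearly in $t$. In particular $\int_0^\infty \mathbb{P}_\lambda[\lambda_s\in U]\,\mathrm{d}s=+\infty$, hence $\mathbb{E}_\lambda[\eta_U]=+\infty>0$.

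For a general Borel $B$ with $\nu(B)>0$, the plan is to reduce to the open case via the (strong) Markov property. Since $\nu$ is absolutely continuous with some density $f$, the set $B\cap\{f>0\}$ has strictly positive Lebesgue measure. Starting from any $\mu$ lying in a sufficiently small open set $U\subset\mathrm{supp}(\nu)$, I would argue that the law $p_h(\mu,\cdot)$ of $\lambda_h$ has an absolutely continuous component whose density is strictly positive on $\mathrm{supp}(\nu)$ for any $h>0$ large enough to allow at least one jump; this is because the first jump time is exponentially distributed and therefore has a density, while the deterministic decay $s\mapsto a+(\mu-a)e^{-\beta s}$ is a smooth diffeomorphism between jumps. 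Combining this with the open-set step applied to $U$ and the strong Markov property at the hitting time of $U$ yields $\mathbb{P}_\lambda[\lambda_{\cdot}\in B]>0$ on a time set of positive Lebesgue measure, and hence $\mathbb{E}_\lambda[\eta_B]>0$.

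The main obstacle is the passage from open to general Borel sets: a fat-Cantor-type $B$ can have $\nu(B)>0$ while having empty interior, so the Portmanteau inequality cannot be invoked on $B$ directly. The rescue is the smoothing property of the PDMP propagator, which turns the initial Dirac mass at $\lambda$ into an absolutely continuous distribution after a single jump has occurred. Verifying this smoothing rigorously — by conditioning on the number and timing of jumps in $[0,h]$ and using the exponential law of the first jump together with the explicit form of the decay $h(t,y)=e^{-\beta t}y$ — is the technical heart of the argument.
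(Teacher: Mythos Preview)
For open sets your argument is exactly the paper's: combine the Portmanteau bound with $\nu_t\to\nu$ from Lemma~\ref{lem:convergence_nut}. The difference is that the paper does \emph{not} split cases. It asserts in one line that ``by the absolute continuity of the measure $\nu$, we have that $B$ is a continuity set of $\nu$,'' and then applies Portmanteau to every measurable $B$ with $\nu(B)>0$ to conclude $\nu_t(B)\to\nu(B)>0$ and hence $\mathbb{E}_\lambda[\eta_B]=\infty$. Your fat-Cantor objection is on point: absolute continuity of $\nu$ does not force $\nu(\partial B)=0$ for an arbitrary Borel $B$, so the paper's step is not justified as written, and your second part is genuinely supplying a missing argument rather than taking a detour.

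Your smoothing plan is sound; here is a lighter variant that avoids analysing the transition density $p_h(\mu,\cdot)$ directly. Between consecutive jump times $T_i<T_{i+1}$ the flow $t\mapsto\lambda_t=a+(\lambda_{T_i}-a)e^{-\beta(t-T_i)}$ is a $C^1$ diffeomorphism onto the interval $(\lambda_{T_{i+1}-},\lambda_{T_i}]$, so by change of variables
\[\int_{T_i}^{T_{i+1}} I_{\{\lambda_s\in B\}}\,\mathrm{d}s=\int_{B\cap(\lambda_{T_{i+1}-},\,\lambda_{T_i}]}\frac{\mathrm{d}\ell}{\beta(\ell-a)}.\]
Since $\nu(B)>0$ forces $|B|>0$, pick $a<c<d$ with $|B\cap[c,d]|>0$. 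From any starting intensity the event that the process first exceeds $d$ (via finitely many jumps in a short time, each shock at least some fixed $y_0>0$ in the support of $Y$) and then drifts continuously below $c$ before the next jump has positive probability; on that event the displayed integral is strictly positive, which already yields $\mathbb{E}_\lambda[\eta_B]>0$ without the transition-density computation you flagged as the technical heart.
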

    \begin{proof}
        Let $B$ be measurable with $\nu(B)>0$ and $\nu_t$ as in Lemma \ref{lem:convergence_nut}. By the absolute continuity of the measure $\nu$, we have that $B$ is a continuity set of $\nu$. Hence, $\lim_{t \to \infty}\nu_t(B) = \nu(B) >0$. Consequently, we have that $\mathbb{E}_\lambda\left[\eta_b\right]= \int_0^\infty \mathbb{P}_\lambda \left[ \lambda_s \in B\right] \mathrm{d}s =\infty>0$.
    \end{proof}

\begin{lem}
    The Markovian intensity process $\{ \lambda_t\}_{t \geq 0}$ is bounded in probability on average.\end{lem}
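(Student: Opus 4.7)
The plan is to reduce boundedness in probability on average to the tightness of the stationary limiting measure $\nu$ combined with the Cesàro weak convergence established in Lemma \ref{lem:convergence_nut}. Concretely, fix $\lambda > a$ and $\varepsilon > 0$, and recall from Lemma \ref{lem:convergence_nut} that the Cesàro averages $\nu_t(B) = \frac{1}{t}\int_0^t \mathbb{P}_\lambda[\lambda_s \in B]\,\mathrm{d}s$ converge weakly to the stationary distribution $\nu$. Hence it suffices to exhibit a compact set $K \subset \mathbb{R}$ with $\nu(K) \geq 1-\varepsilon$ and such that $\liminf_{t\to\infty}\nu_t(K) \geq \nu(K)$.

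First I would invoke tightness of $\nu$: since $\nu$ is a Borel probability measure on the Polish space $\mathbb{R}$, Ulam's theorem guarantees that for any $\varepsilon >0$ there is a compact set $K_\varepsilon$ with $\nu(K_\varepsilon) \geq 1-\varepsilon$. Because $\lambda_t \geq a$ almost surely by the representation \eqref{lambdamarkov}, and the stationary measure $\nu$ is supported in $[a,\infty)$, I may take $K_\varepsilon$ to be a closed interval of the form $[a,M_\varepsilon]$ for some sufficiently large $M_\varepsilon$.

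Next I would use absolute continuity of $\nu$ from Lemma \ref{lem:absolutely_continuous} to turn weak convergence into genuine pointwise convergence on this compact set. Since $\nu$ is absolutely continuous with respect to Lebesgue measure, the boundary $\partial K_\varepsilon = \{a, M_\varepsilon\}$ has $\nu$-measure zero, so $K_\varepsilon$ is a $\nu$-continuity set. The Portmanteau theorem applied to the weak convergence $\nu_t \Rightarrow \nu$ then yields
\begin{equation*}
\lim_{t\to\infty}\nu_t(K_\varepsilon)=\nu(K_\varepsilon)\geq 1-\varepsilon,
\end{equation*}
which is precisely the definition of boundedness in probability on average.

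I do not expect a serious obstacle here, as all the heavy lifting was done in the preceding lemmas (existence of a stationary distribution, its absolute continuity, and Cesàro weak convergence). The only subtlety is to ensure that the compact set chosen via tightness is a continuity set of $\nu$; this is handled for free because $\nu$ is absolutely continuous with respect to Lebesgue measure, so taking the compact set to be an interval guarantees a finite (in fact, two-point) boundary of Lebesgue and hence $\nu$-measure zero.
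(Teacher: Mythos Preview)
Your proposal is correct and follows essentially the same approach as the paper: choose a compact set $K_\varepsilon$ with $\nu(K_\varepsilon)\geq 1-\varepsilon$ by tightness, use absolute continuity of $\nu$ (Lemma \ref{lem:absolutely_continuous}) to ensure $K_\varepsilon$ is a $\nu$-continuity set, and then invoke the Ces\`aro weak convergence of Lemma \ref{lem:convergence_nut} via Portmanteau to obtain $\lim_{t\to\infty}\nu_t(K_\varepsilon)=\nu(K_\varepsilon)\geq 1-\varepsilon$. The paper's proof is simply a terser version of the same argument.
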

    \begin{proof}
        Let $\varepsilon>0$ be arbitrary and $K_\varepsilon\subset \mathbb{R}_+$ be compact such that $\nu(K) \geq 1-\varepsilon$. Then we have by Lemma \ref{lem:absolutely_continuous} and Lemma \ref{lem:convergence_nut}, that \[ \liminf_{t \to \infty} \frac{1}{t}\int_0^t \mathbb{P}_\lambda \left[ \lambda_s \in K\right] \mathrm{d}s = \nu(K) \geq 1-\varepsilon.\]
    \end{proof}

\begin{thm}\label{thm:posrecc}
    The Markovian intensity process $\{ \lambda_t\}_{t \geq 0}$ is positive Harris recurrent.\end{thm}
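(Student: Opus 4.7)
The plan is to combine the three preceding lemmas of this subsection with Theorem \ref{thm:Harris_meyn} (Theorem 3.2 of \cite{Meyn.1993}). The intensity $\{\lambda_t\}_{t\geq 0}$ is a right-continuous, time-homogeneous strong Markov process taking values in the locally compact separable metric space $\mathbb{R}_+$. The preceding lemmas establish that it is a $T$-process (via the continuity of the resolvent kernel $R_1$), that it is $\nu$-irreducible for the stationary distribution $\nu$ of Lemma \ref{lem:absolutely_continuous}, and that it is bounded in probability on average. These are exactly the three hypotheses needed to invoke Theorem \ref{thm:Harris_meyn}, and its conclusion gives positive Harris recurrence.

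One minor point I would make explicit when writing the proof is that the same measure $\nu$ plays both roles: it serves as the irreducibility measure, and it is the finite invariant probability measure underlying the boundedness-in-probability property. This compatibility is guaranteed by Lemma \ref{lem:convergence_nut}, which shows that the Ces\`aro averages $\nu_t$ converge weakly to $\nu$, so $\nu$ is indeed invariant and is a probability measure; consequently the Harris recurrence delivered by Theorem \ref{thm:Harris_meyn} is of the positive type rather than the null type.

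Since all the substantive work has been carried out in the preparatory lemmas, I expect no real obstacle here. The delicate steps were completed earlier: verifying the $T$-process property through the resolvent $R_1$ by interpreting it as a killed-process occupation integral and invoking Theorems 31.9 and 32.2 of \cite{Davis.1993}; controlling $\lambda_t-\mu_t$ in probability to transfer weak convergence of marginals from the $\lambda_0=a$ process to an arbitrary initial intensity $\lambda>a$ via Slutsky; and using the absolute continuity of $\nu$ to ensure that each Borel set with positive $\nu$-mass is a continuity set of $\nu$, thereby promoting weak convergence of $\nu_t$ to positive mass of the occupation measure. With those facts in hand, Theorem \ref{thm:posrecc} is essentially a one-line corollary of Theorem \ref{thm:Harris_meyn}.
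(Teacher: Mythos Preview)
Your proposal is correct and matches the paper's own proof exactly: the paper simply states that the result follows directly from Theorem \ref{thm:Harris_meyn} together with the preceding lemmas establishing the $T$-process property, $\nu$-irreducibility, and boundedness in probability on average. Your additional remarks about the role of $\nu$ as both irreducibility measure and invariant probability are accurate elaborations but not required for the argument.
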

    \begin{proof}
        This follows directly from Theorem \ref{thm:Harris_meyn} and the previous lemmas.
    \end{proof}

This gives us that our intensity process visits all sets with $\nu(B)>0$ infinitely often, but we still have to
check 
that the stationary distribution has support $(a,\infty)$.
First, we will show that the support is unbounded from above.
\begin{lem}
    The support of $\nu$ is unbounded from above.\end{lem}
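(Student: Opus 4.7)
The plan is to argue by contradiction: suppose $M := \sup(\mathrm{supp}(\nu)) < \infty$. Because $\nu$ is stationary for $\{\lambda_t\}_{t \geq 0}$, starting from $\lambda_0 \sim \nu$ gives $\lambda_t \sim \nu$ for every $t \geq 0$, so $\mathbb{P}_\nu[\lambda_\epsilon > M] = \nu((M, \infty)) = 0$ for each $\epsilon > 0$. I would reach a contradiction by producing, for some small $\epsilon > 0$, an event $A$ with $\mathbb{P}_\lambda[A] > 0$ for every $\lambda \in \mathrm{supp}(\nu) \subseteq [a, M]$ on which $\lambda_\epsilon > M$; integrating against $\nu$ would then yield $\mathbb{P}_\nu[\lambda_\epsilon > M] > 0$, the desired contradiction.

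The event encodes the heuristic that many jumps with sizeable shocks can pile up in a short interval. Since $Y > 0$ almost surely, there exists $\delta > 0$ with $p := \mathbb{P}[Y > \delta] > 0$. Fix $\epsilon > 0$ small and pick an integer $k$ with $k \delta e^{-\beta \epsilon} > M - a$, and take
\[ A := \{ T_k \leq \epsilon,\ Y_1 > \delta, \ldots, Y_k > \delta \}. \]
On $A$, the representation \eqref{lambdamarkov} yields $\lambda_\epsilon \geq a + \sum_{i=1}^k Y_i e^{-\beta(\epsilon - T_i)} \geq a + k \delta e^{-\beta \epsilon} > M$.

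To bound $\mathbb{P}_\lambda[A]$ below uniformly in $\lambda \in [a, M]$, I would use two ingredients. First, $\lambda_s \geq a$ pathwise, so the compensator $\Lambda_t = \int_0^t \lambda_s \, \mathrm{d}s$ dominates $a t$, the counting process $\{N_t\}_{t\geq 0}$ stochastically dominates a rate-$a$ Poisson process, and therefore $\mathbb{P}_\lambda[T_k \leq \epsilon] \geq \mathbb{P}[\mathrm{Poi}(a\epsilon) \geq k] > 0$. Second, the marks are drawn independently from $F_Y$ at each jump epoch, and larger marks only speed up subsequent jumps. The cleanest way to package these into the single bound $\mathbb{P}_\lambda[A] \geq p^k \cdot \mathbb{P}[\mathrm{Poi}(a\epsilon) \geq k] > 0$ is through the standard Poisson random measure construction of the Hawkes process on $\mathbb{R}_+ \times \mathbb{R}_+ \times \mathbb{R}_+$: accept a point $(s,u,y)$ of a rate-$\mathrm{d}s \otimes \mathrm{d}u \otimes F_Y(\mathrm{d}y)$ Poisson measure as a jump with mark $y$ whenever $u \leq \lambda_{s-}$; on this canonical space the time and mark coordinates decouple, so the joint lower bound factors. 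The delicate point is precisely this coupling, since under the direct law of the Hawkes process the jump times and marks are correlated (larger marks raise later intensities and hence make further jumps more likely); once the factored estimate is established, $\mathbb{P}_\nu[\lambda_\epsilon > M] \geq \int_{[a,M]} \mathbb{P}_\lambda[A]\, \nu(\mathrm{d}\lambda) > 0$ contradicts stationarity and forces $\mathrm{supp}(\nu)$ to be unbounded above.
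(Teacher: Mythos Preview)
Your argument is correct, and you have put your finger on the only genuinely delicate step. The factored lower bound $\mathbb{P}_\lambda[A]\geq p^k\,\mathbb{P}[\mathrm{Poi}(a\epsilon)\geq k]$ does hold, but ``the time and mark coordinates decouple'' is a slight overstatement: the clean justification is either an FKG-type inequality (the function $(y_1,\ldots,y_{k-1})\mapsto\mathbb{P}_\lambda[T_k\leq\epsilon\mid Y_1=y_1,\ldots,Y_{k-1}=y_{k-1}]$ is coordinatewise increasing and the $Y_i$ are independent, so $\mathbb{P}_\lambda[A]\geq p^k\,\mathbb{P}_\lambda[T_k\leq\epsilon]$), or---even simpler---replacing $A$ by the event that the Poisson measure restricted to $[0,\epsilon]\times[0,a]\times(\delta,\infty)$ has at least $k$ points, all of which are automatically accepted since $\lambda_{s-}\geq a$; this has probability $\mathbb{P}[\mathrm{Poi}(a\epsilon p)\geq k]>0$ and already forces $\lambda_\epsilon>M$ regardless of any other jumps.

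The paper's route is much shorter and uses a different mechanism. Under Assumption~\ref{assumption1} the condition $\lim_{s\to s_Y}M_Y(s)=\infty$ for a finite $s_Y$ forces $Y$ to have unbounded support, so a \emph{single} jump suffices: on $\{T_1\leq\delta,\ Y_1>b e^{\beta\delta}\}$ one already has $\lambda'_\delta\geq a+Y_1 e^{-\beta\delta}>b$, and since $T_1$ and $Y_1$ are independent this event has positive probability with no coupling argument needed. Your many-moderate-jumps approach has the advantage of not requiring unbounded shocks (so it would still work if $Y$ were bounded), but under the paper's standing assumptions the one-large-jump argument is the more economical choice.
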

    \begin{proof}
        Assume there is a finite bound $b>0$ such that $ \nu((a,b)) =1$. We use that there is a stationary version of our intensity process, and we will denote it by $\lambda'_t= a + (\lambda'_0 - a) e^{-\beta t} + \sum_{i=1}^{N'_t} Y_i e^{- \beta (t-T'_i)}$, where $\lambda'_0 \sim \nu$. Choose $\delta>0$ arbitrary. Then, we have \begin{multline*}
            0= \nu((b,\infty)) = \mathbb{P}_\nu\left[\lambda'_\delta >b \right] \geq\mathbb{P}_\nu\left[\lambda'_\delta >b, N'_\delta =1 \right] \\ =\mathbb{P}_\nu\left[\lambda'_\delta >b \left\vert N'_\delta =1 \right.\right] \mathbb{P}_\nu \left[N'_\delta =1 \right] \geq \mathbb{P}\left[Y>be^{\delta \beta}\right] \mathbb{P}_\nu\left[N'_\delta=1 \right] >0,
        \end{multline*}  which is a contradiction. Since the support of the shock events $Y$ is unbounded, we consequently have that the support of the stationary distribution is unbounded.
    \end{proof}
\begin{lem}\label{lem:support_is_interval}
    The support of $\nu$ is an open set of the form $(b, \infty)$ for some $b \geq a$.
    \end{lem}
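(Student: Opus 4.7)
The plan is to set $b := \inf \mathrm{supp}(\nu)$ and to show that $b \geq a$ and $(b, \infty) \subseteq \mathrm{supp}(\nu)$; the openness in the statement is then immediate from the absolute continuity of $\nu$ (Lemma~\ref{lem:absolutely_continuous}), which makes the boundary point $\{b\}$ negligible. The first part is immediate from the representation $\lambda_t = a + (\lambda_0 - a)e^{-\beta t} + \sum_{i=1}^{N_t} Y_i e^{-\beta(t-T_i)}$: since $\lambda_0 > a$ and all summands are non-negative, $\lambda_t > a$ almost surely, so $\nu((-\infty, a]) = 0$ in the stationary regime and therefore $b \geq a$.

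For $(b, \infty) \subseteq \mathrm{supp}(\nu)$, the driving idea is that between jumps the intensity follows the deterministic decay $\lambda_t = a + (\lambda_0 - a)e^{-\beta t}$, so one can steer from any high value down to an arbitrary target in $(a, \lambda_0)$ simply by waiting without a jump. Fix $y > b$ and $\epsilon \in (0, y-a)$. By the previous lemma choose $x \in \mathrm{supp}(\nu)$ with $x > y + \epsilon$, and set $t^* := \beta^{-1}\log\bigl((x-a)/(y-a)\bigr) > 0$, so that a noiseless trajectory starting at $x$ reaches $y$ at time $t^*$. By continuity of $\lambda \mapsto a + (\lambda-a)e^{-\beta t^*}$, pick $\delta > 0$ such that this map sends $(x-\delta, x+\delta)$ into $(y-\epsilon, y+\epsilon)$. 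Letting $\{\lambda'_t\}_{t \geq 0}$ denote a stationary version with counting process $N'$, stationarity yields
\[
\nu\bigl((y-\epsilon, y+\epsilon)\bigr) = \mathbb{P}\bigl[\lambda'_{t^*} \in (y-\epsilon, y+\epsilon)\bigr] \geq \mathbb{P}\bigl[\lambda'_0 \in (x-\delta, x+\delta),\, N'_{t^*} = 0\bigr].
\]
Conditioning on $\lambda'_0 = \lambda$, the Hawkes structure gives $\mathbb{P}_\lambda[N_{t^*} = 0] = \exp\bigl(-\int_0^{t^*}(a + (\lambda - a)e^{-\beta s})\,\mathrm{d}s\bigr)$, which is bounded below by a strictly positive constant on the bounded set $(x-\delta, x+\delta)$. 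Combined with $\nu((x-\delta, x+\delta)) > 0$ (since $x \in \mathrm{supp}(\nu)$), this gives $\nu((y-\epsilon, y+\epsilon)) > 0$, hence $y \in \mathrm{supp}(\nu)$.

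The main obstacle is the passage from \emph{the deterministic trajectory passes through $y$} to \emph{the stochastic intensity lands in a neighbourhood of $y$ with positive probability}, which requires both the uniform positivity of the no-jump probability on the starting set and the use of the stationary version to translate positivity at time $t^*$ back into positivity of $\nu$. Both issues are handled cleanly by the explicit form of $\lambda_t$ between jumps and by stationarity.
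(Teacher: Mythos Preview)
Your proof is correct and uses essentially the same mechanism as the paper: pick a point in the support above the target level, use the deterministic exponential decay together with the positive probability of no jump over a fixed time window, and invoke stationarity to transfer mass into a neighbourhood of the target. The only difference is framing---the paper argues by contradiction (assume a maximal gap $(c,d)$ in the support, start just above $d$, and decay into the gap), whereas you give the direct version (for arbitrary $y>b$, start from some $x>y$ in the support and decay to $y$). Your version is slightly more explicit in computing $t^*$ and choosing $\delta$ via continuity, but the underlying idea is identical.
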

    \begin{proof}
        We already know that the support is unbounded. Assume that the support of the stationary distribution is not an open interval. Since $\nu$ is absolutely continuous with respect to the Lebesgue measure, we have that there exists an interval $(c,d) \subset (a,\infty)$ such that $\nu((c,d)) =0$, $\nu((a,c))>0$, and $\nu(d,\infty)>0$. Now, we assume that this interval is maximal. In particular, we want that for all $\varepsilon>0$ $\nu((d,d+\varepsilon))>0$. Let $  -\frac{1}{\beta} \ln\left(\frac{c-a}{d-a+\varepsilon}\right)>\delta > -\frac{1}{\beta} \ln\left(\frac{d-a}{d-a+\varepsilon}\right)$ deterministic. Then we have that if $\lambda'_0 \in (d, d+\varepsilon) $ and no jump occurs between time $0$ and $\delta$, $\lambda'_\delta \in (c, d) $, which is a contradiction. Writing this down, we get that
        \begin{align*}
            & \nu((c,d)) = \mathbb{P}_\nu \left[\lambda'_{\delta} \in (c,d)\right]\geq \mathbb{P}_\nu \left[\lambda'_{\delta} \in (c,d), \lambda'_0 \in (d,d+\varepsilon) \right] \\&\quad
            = \mathbb{P}_\nu \left[\lambda'_{\delta} \in (c,d)\left\vert  \lambda'_0 \in (d,d+\varepsilon)\right. \right]\mathbb{P}_\nu \left[ \lambda'_0 \in (d,d+\varepsilon) \right] \\&\quad \geq \mathbb{P}_\nu \left[ N'_\delta= 0 \right] \nu((d,d+\varepsilon)) >0.
        \end{align*}
        This is a contradiction. Hence, the support of $\nu$ is an interval.
    \end{proof}

\begin{thm}
    The support of $\nu$ is $(a,\infty)$.
\end{thm}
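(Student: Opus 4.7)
The plan is to show that $b = a$ in the representation $\mathrm{supp}(\nu) = (b, \infty)$ established in Lemma \ref{lem:support_is_interval} by arguing by contradiction: if $b > a$, the deterministic decay of the intensity between jumps pushes any trajectory starting just above $b$ strictly below $b$ after a finite waiting time, which under stationarity would place positive mass below $b$.

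Concretely, I would suppose for contradiction that $b > a$ and work with the stationary version $\{\lambda'_t\}_{t \geq 0}$ of the intensity (i.e.\ $\lambda'_0 \sim \nu$), so that $\lambda'_t \sim \nu$ for every $t \geq 0$. Between jumps of the underlying counting process we have the deterministic flow
\[
\lambda'_t = a + (\lambda'_0 - a) e^{-\beta t} \quad \text{on } \{N'_t = 0\}.
\]
Given $b > a$, pick $\varepsilon > 0$ (with $\nu((b, b+\varepsilon)) > 0$, which holds because $b$ is the left endpoint of the support and $\nu$ is absolutely continuous by Lemma \ref{lem:absolutely_continuous}), and choose $\delta > 0$ so that
\[
a + (b + \varepsilon - a) e^{-\beta \delta} < b, \qquad \text{equivalently} \qquad e^{-\beta \delta} < \frac{b-a}{b+\varepsilon - a},
\]
which is possible precisely because $b - a > 0$. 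For any such $\delta$, on the event $\{\lambda'_0 \in (b, b+\varepsilon),\, N'_\delta = 0\}$ we deterministically have $\lambda'_\delta < b$, hence $\lambda'_\delta \notin (b, \infty)$.

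Combining these observations yields
\[
0 = \nu\bigl((a, b]\bigr) = \mathbb{P}_\nu\bigl[\lambda'_\delta \leq b\bigr] \geq \mathbb{P}_\nu\bigl[\lambda'_0 \in (b, b+\varepsilon),\, N'_\delta = 0\bigr] \geq \mathbb{P}_\nu\bigl[N'_\delta = 0 \,\bigl|\, \lambda'_0 \in (b, b+\varepsilon)\bigr]\, \nu\bigl((b, b+\varepsilon)\bigr).
\]
The second factor is strictly positive by the choice of $\varepsilon$, and the first factor is also strictly positive since conditional on $\lambda'_0 = \lambda \in (b, b+\varepsilon)$ the probability of no jump during $[0, \delta]$ dominates $\exp(-\int_0^\delta (a + (\lambda - a) e^{-\beta s})\, \mathrm{d}s) > 0$. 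This contradicts $\nu((a, b]) = 0$ and forces $b = a$.

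The only step I anticipate requiring care is the positivity of $\mathbb{P}_\nu[N'_\delta = 0 \mid \lambda'_0 \in (b, b+\varepsilon)]$, because $\lambda'$ evolves stochastically through the intensity-driven Poisson mechanism; however, since in the no-jump regime the intensity is bounded above by $b + \varepsilon$, the standard domination by the exponential of the integrated intensity gives the required strictly positive lower bound without difficulty. Everything else is a direct consequence of the deterministic decay between jumps and of the already established absolute continuity and support-interval structure of $\nu$.
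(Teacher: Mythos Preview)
Your proof is correct and follows essentially the same approach as the paper: assume the support is $(b,\infty)$ with $b>a$, start the stationary process in $(b,b+\varepsilon)$, and use that the deterministic decay with no jump on a suitably long interval $[0,\delta]$ forces $\lambda'_\delta<b$, contradicting $\nu((a,b])=0$. The paper's version merely specialises to $b=a+\varepsilon$ (so the interval width equals $b-a$) and correspondingly takes $\delta>-\tfrac{1}{\beta}\ln\tfrac{1}{2}$, but the argument is otherwise identical.
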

    \begin{proof}
        This proof is similar to the proof of Lemma \ref{lem:support_is_interval}. Assume that the support of $\nu$ is not $(a, \infty)$. Then there exists a $\varepsilon>0$ such that the support is $(a+\varepsilon, \infty)$ and $\nu((a+\varepsilon, a+2\varepsilon))>0$. Let $\delta >-\frac{1}{\beta}\ln(\frac{1}{2})$. Then again, if $\lambda'_0 \in (a+\varepsilon, a+2\varepsilon)$ and $N'_\delta=0$, which both happen with positive probability, then $\lambda'_\delta <a+\varepsilon$. This contradicts the assumption that $\nu((a,a+\varepsilon))=0$.
    \end{proof}

This gives us that our intensity process visits every open interval in $(a, \infty)$ infinitely often. Since it decays only in a continuous way via its exponentially decaying drift, we even have that the process $\{ \lambda_t \}_{t\geq 0}$ visits every single point $\lambda \in (a,\infty)$ infinitely often with probability $1$.

\begin{thm}\label{thm:light_tailed_recurrence}
Let $\lambda >a$ be arbitrary and $S_1^\lambda$ the first positive time point such that $\lambda_{S^ \lambda_i} =\lambda$. Then, there exists a $r>0$ such that $\mathbb{E}_\lambda\left[e^{r S^\lambda_1}\right] < +\infty.$
\end{thm}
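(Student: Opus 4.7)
The plan is to combine an exponential Foster-Lyapunov drift argument with a hitting-time decomposition that converts exponential return to a compact set into exponential return to the single point $\lambda$.

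\textbf{Drift inequality and return to a compact set.} The first step is to apply the generator $\mathcal{A}^\lambda$ from Section~\ref{sec:Markov} to the test function $V(x) = e^{\alpha x}$ with $\alpha > 0$ small. A direct computation gives
\begin{equation*}
\mathcal{A}^\lambda V(x) \;=\; e^{\alpha x}\bigl(\alpha \beta (a - x) + x(M_Y(\alpha) - 1)\bigr) \;=\; V(x)\bigl(\alpha\beta a + x(M_Y(\alpha) - 1 - \alpha\beta)\bigr).
\end{equation*}
By Assumption~\ref{assumption1}, $M_Y$ is finite near the origin and $M_Y'(0) = \mathbb{E}[Y] < \beta$, so for sufficiently small $\alpha$ we have $M_Y(\alpha) - 1 - \alpha\beta < 0$. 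Consequently $\mathcal{A}^\lambda V(x) \le -\gamma V(x) + K\mathbf{1}_C(x)$ on a compact set $C = [a, M]$ for some $\gamma, K > 0$ and $M$ large enough. Together with the $T$-process property and $\nu$-irreducibility already established in this section, standard exponential-ergodicity theory for Harris processes in the Foster-Lyapunov spirit of \cite{Meyn.1993} then produces $r_0 > 0$ such that the first hitting time $\tau_C := \inf\{t \ge 0 : \lambda_t \in C\}$ satisfies $\mathbb{E}_x[e^{r_0 \tau_C}] < \infty$ uniformly over starting values $x$ in bounded sets.

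\textbf{From the compact set to the exact level $\lambda$.} Between jumps, $\{\lambda_t\}_{t\ge 0}$ flows continuously downward towards $a$, so it hits the precise value $\lambda$ from above via smooth decay. The next step is to show that there exist $T^* > 0$ and $p^* > 0$ such that for every $x \in C$,
\begin{equation*}
\mathbb{P}_x\bigl[\,\exists\, t \in [0, T^*]\colon \lambda_t = \lambda\,\bigr] \;\ge\; p^*.
\end{equation*}
For $x \ge \lambda$ it suffices that no jump occur on the deterministic window of length $T(x) = \tfrac{1}{\beta}\ln\tfrac{x-a}{\lambda-a} \le T^*$, whose probability $\exp\bigl(-\int_0^{T(x)} \lambda_s\,\mathrm{d}s\bigr)$ is bounded below uniformly in $x \in [\lambda, M]$. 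For $x \in (a, \lambda)$, one prepends a single jump of size large enough to cross $\lambda$ (arriving at rate at least $a$, with probability controlled by the density of $F_Y$), and then applies the previous argument.

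\textbf{Concatenation.} Alternating cycles of the form ``return to $C$ followed by an attempt of length $T^*$'' have length with a finite exponential moment, by the first step together with the boundedness of $T^*$; by the strong Markov property, each attempt succeeds independently with probability at least $p^*$. Hence $S_1^\lambda$ is stochastically dominated by a geometric sum of cycle lengths and therefore retains a finite exponential moment for $r > 0$ sufficiently small. The main obstacle is the second step: producing a strictly positive, uniform lower bound on the probability of hitting a single point of the state space (a Lebesgue-null event from the viewpoint of the jump mechanism) forces us to exploit the continuous deterministic flow, and to treat the cases $x \ge \lambda$ and $x < \lambda$ separately.
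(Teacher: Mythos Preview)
Your argument is correct and follows the classical Foster--Lyapunov plus geometric-trials template, which is a genuinely different route from the paper's. The paper does not use a drift inequality at all: instead it invokes a level-crossing result of \cite{Borovkov.2008} to obtain that, under the stationary law $\nu$, the continuous down-crossings of level $\lambda$ have intensity $\beta(\lambda-a)p(\lambda)$, whence $\mathbb{E}_\nu[e^{rS_1^\lambda}]<\infty$ for $r$ below that value; it then disintegrates over the starting point to get finiteness for Lebesgue-almost every $x$, and finally exploits the monotone structure of the continuous downward flow (if $x>y>\lambda$ then the hitting time of $\lambda$ from $x$ dominates that from $y$ pathwise, since the flow must pass through $y$ first) to upgrade ``almost every $x$'' to ``every $x$'', including $x=\lambda$ via an intermediate level below $\lambda$. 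Your approach is more robust and self-contained---it needs neither the stationary density nor the Borovkov crossing machinery, and the Lyapunov function $V(x)=e^{\alpha x}$ with $\alpha(\mathbb E[Y]-\beta)<0$ is explicit---whereas the paper's approach yields a concrete admissible exponent $r<\beta(\lambda-a)p(\lambda)$ and sidesteps the mildly delicate uniformity issue in your second step: the lower bound $p^*$ as $x\downarrow a$ really requires allowing several jumps rather than ``a single jump of size large enough'' (nothing in Assumption~\ref{assumption1} forces $\mathbb P[Y>\lambda-a]>0$), and $C=[a,M]$ is not compact in the state space $(a,\infty)$, though these points are easily repaired.
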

\begin{proof}
The Markovian Hawkes process satisfies Scenario 1.1 of \cite{Borovkov.2008} and $\beta (a-\lambda) \neq 0$ for all $\lambda >a$. Therefore, we have that, under the stationary distribution, the number of continuous crossings of our process through $\lambda$ has intensity $\mu_c(\lambda):=  \beta (\lambda -a) p(\lambda)>0$. Here, $p(\lambda)$ denotes the density of the stationary distribution $\nu$. Consequently, we have \[\mathbb{P}_\nu \left[ S_1^\lambda >t\right] = \exp\left(-\int_0^t \beta (\lambda-a) p(\lambda) \, \mathrm{d}s\right) = \exp(-t\beta (\lambda-a) p(\lambda)).\] This implies that $\mathbb{E}_\nu \left[ e^{r S_1^\lambda}\right] < +\infty$ for all $r < \beta (\lambda-a) p(\lambda)$.  \\
Using the fact that $\nu$ is absolutely continuous with respect to the Lebesgue measure, we have that \[\mathbb{E}_\nu \left[e^{r S^\lambda_1}\right] = \int_a^\infty \mathbb{E}_x \left[e^{r S^\lambda_1}\right] p(x) \, \mathrm{d}x < +\infty ,\] which gives us that
$\mathbb{E}_x\left[e^{r S^\lambda_1}\right]<+\infty$ for Lebesgue almost every $x>a$.
Let now $\lambda < y $ be arbitrary and write $S^\lambda_1 \vert _{\lambda_0=y}$ for the time of the first crossing of the level $\lambda$ starting in $y$. Then, there exists a $x>y$ such that $\mathbb{E}\left[e^{r S^\lambda_1 \vert _{\lambda_0=x}}\right]= \mathbb{E}_x\left[e^{r S^\lambda_1}\right]<+\infty$. The downward movement of the intensity process $\{ \lambda_t\}_{t \geq 0}$ is continuous. Hence, if it starts in $x$ and reaches the level $\lambda$ it must cross $y$. By the strong Markov property, we can restart the process after hitting $y$ and therefore \[S^\lambda_1 \vert_{\lambda_0=x} = S^y_1 \vert_{\lambda_0=x} + S^\lambda_1 \vert_{\lambda_0=y} \geq S^\lambda_1 \vert_{\lambda_0=y}.\]
Thus, \[ \mathbb{E}_y\left[e^{r S^\lambda_1}\right] \leq \mathbb{E}_x\left[e^{r S^\lambda_1}\right] < +\infty.\]
This property holds for all $\lambda$ and $y$ as long $y>\lambda$ and $r$ is chosen suitable small, depending on the choice of $\lambda$.

Consider now $S^\lambda_1 \vert_{\lambda_0 = \lambda}$. Then, there exists an $x< \lambda$ such that $\mathbb{E}_x\left[e^{r S^\lambda_1}\right] < +\infty$. Since $\lambda >x$, we know there exists a positive $\tilde r>0$ such that $\mathbb{E}_\lambda\left[e^{\tilde r S^x_1}\right] < +\infty$.
Now, there are almost surely two possibilities. Either the intensity hits the level $x$ before it returns to $\lambda$, i.e. $S^x_1\vert_{\lambda_0 = \lambda} \leq S^\lambda_1 \vert_{\lambda_0 = \lambda}$, or it first returns to $\lambda$.
If we have $\omega \in \Omega$ such that the path of $\{ \lambda_t\}_{t \geq 0}$ hits the level $x$ before returning to $\lambda$, we can use the restart argument as before and obtain the equality $S^\lambda_1 \vert_{\lambda_0=\lambda}(\omega)  = S^x_1 \vert_{\lambda_0=\lambda}(\omega) + S^\lambda_1 \vert_{\lambda_0=x}(\omega)$. For almost every other $\omega$, we have $S^\lambda_1 \vert_{\lambda_0=\lambda}(\omega)< S^x_1 \vert_{\lambda_0=\lambda}(\omega) \leq S^x_1 \vert_{\lambda_0=\lambda}(\omega) + S^\lambda_1 \vert_{\lambda_0 =x}(\omega)$.
This give us for $q$ small enough that \[ \mathbb{E}_\lambda\left[ e^{q S^\lambda_1}\right] \leq \mathbb{E}_\lambda\left[ e^{q S^x_1}\mathbb{E}_x\left[ e^{q S^\lambda_1}\right]\right] = \mathbb{E}_\lambda\left[ e^{q S^x_1}\right]\mathbb{E}_x\left[ e^{q S^\lambda_1}\right]< +\infty.\]
This ends the proof.
\end{proof}

\subsection{Exponential change of measure}
We derive now the Cram\'er-Lundberg asymptotics under assumption that claims are light-tailed.
More precisely, we assume the following:
\begin{assumption}\label{assumption2}
    From now on, we assume that the distribution of the claim sizes $F_U$ is absolutely continuous with respect to the Lebesgue measure. Further, we assume that there exists some $s_U \in (0, \infty]$ such that the corresponding moment-generating
    \[M_U(s):=\mathbb{E}\left[e^{sU}\right]\]
    is finite for all $s<s_U$ and $\lim_{s\to s_U} M_U(s) = \infty$.
\end{assumption}
We are interested in the asymptotic behaviour of the ruin probability
\begin{equation}\label{psilambd}
\psi(u)=\psi(u,\lambda):= \mathbb{P}_{(u,\lambda)}\left( \tau < +\infty \right),\end{equation}
where
\[\tau: = \inf \left\lbrace t \geq 0:  X_t\leq 0 \,  \right\rbrace.\]
The main tool to show convergence of the ruin probability is Theorem 2 of \cite{Schmidli.1997} which gives us that the solution to the generalized renewal equation
\begin{equation}\label{renewalequation}
    Z(u)= \int_0^u Z(u-x) (1-p(u,x)) B(\mathrm{d}x) + z(u)
\end{equation}
converges as $u \to \infty$ if $B(x)$ is a probability distribution, $p(u,x) \in [0,1]$ is continuous in $u$, and both $z(u)$ and $\int_0^u p(u,x) B(\mathrm{d}x)$ are directly Riemann integrable.

The first problem that occurs in this approach is that this equation is univariate, whereas the probability of ruin $\psi(u,\lambda)$ depends on the initial values of the surplus and the intensity process. To resolve this, we use the results of Section \ref{sec:recurrence}, i.e. the intensity is Harris positive recurrent. To be precise, we exploit that it returns infinitely often to its initial value with probability $1$. This allows us to choose renewal times so that they coincide with intensity recurrence times.
A second problem is that under our original measure $\mathbb{P}$, suitable choices of the distribution $B$ are generally defective. We bypass this by identifying an alternative measure under which the ruin occurs almost surely and $B$ is no longer defective.


Now, our main goal is to identify a martingale $\{ M_t^{(r)} \}_{t \geq 0}$ and the corresponding alternative measure under which ruin almost surely occurs. For this, we follow the ansatz of \cite{Pojer.2022}, that is, $M_t^{(r)} = \exp(-rX_t - \alpha \lambda_t -\theta t).$ This process is a local martingale if the function $h_r(x,\lambda,t) = \exp(-rx-\alpha \lambda - \theta t)$ is in the domain of the extended generator and satisfies
\[\mathcal{A}h_r(x,\lambda,t)=0.\]
We start from the latter requirement, which by \eqref{generator} is equivalent to
\begin{multline*}
    \mathcal{A}h_r(x,\lambda,t) = -cr h_r(x,\lambda,t)- \alpha \beta (a-\lambda) h_r(x,\lambda,t) - \theta h_r(x,\lambda,t) \\ +\lambda h_r(x,\lambda,t) M_U(r) M_Y(-\alpha) - \lambda h_r(x,\lambda,t) = 0,
\end{multline*}
for all choices of $x,\lambda,t$. Since $h_r$ is positive, we can divide by $h_r$ and get the following two equations
\begin{align}
    -cr - \alpha \beta a - \theta &=0, \label{firsteq}\\
    \alpha \beta+ M_U(r) M_Y(-\alpha) -1 &=0. \label{maineq}
\end{align}
For fixed $r$, we get two equations for two missing variables $\theta(r)$ and $\alpha(r)$.
We focus on equation \eqref{maineq} defining $\alpha(r)$.
\begin{lem}\label{lem:distinct_alpha}
    For $r \leq 0$ and some $r>0$, there exist two distinct solutions to the equation \eqref{maineq}.
    \end{lem}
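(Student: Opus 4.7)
The plan is to view \eqref{maineq}, for each fixed $r$ in a neighbourhood of $0$, as the requirement that $\alpha$ be a zero of
\[
g(\alpha, r) := \alpha \beta + M_U(r)\, M_Y(-\alpha) - 1
\]
on its natural domain $\alpha \in (-s_Y, \infty)$, and then exploit strict convexity in $\alpha$ together with the boundary behaviour of $g$. Differentiating twice yields $\partial^2_\alpha g(\alpha,r) = M_U(r)\,\mathbb{E}[Y^2 e^{-\alpha Y}] > 0$ for every $r < s_U$ (since $M_U(r) > 0$ as $U > 0$), so $g(\cdot, r)$ is strictly convex. For the boundary behaviour, as $\alpha \to \infty$ the linear term $\alpha\beta$ dominates while $M_Y(-\alpha) \to 0$, and as $\alpha \to -s_Y^+$ one has $M_Y(-\alpha) \to \infty$ by Assumption \ref{assumption1}; so $g(\alpha, r) \to +\infty$ at both endpoints. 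Hence $g(\cdot, r)$ has a unique interior minimum $m(r)$, and \eqref{maineq} has exactly $0$, $1$, or $2$ solutions according as $m(r) > 0$, $= 0$, or $< 0$.

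The critical case is $r = 0$: the identity $M_U(0) = 1$ gives $g(0,0) = 0$, while $\partial_\alpha g(0,0) = \beta - M_Y'(0) = \beta - \mathbb{E}[Y] > 0$ by the stability condition \eqref{dodzal}. Strict convexity then forces the minimizer $\alpha_0^*$ to lie strictly to the left of $0$, with $m(0) < 0$, so that alongside $\alpha = 0$ there is a second zero $\alpha_1^* \in (-s_Y, \alpha_0^*)$. For $r < 0$, since $U > 0$ one has $M_U(r) < 1$ and therefore $g(0, r) = M_U(r) - 1 < 0$, whence $m(r) \le g(0,r) < 0$ and two distinct zeros exist directly by the trichotomy above.

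For $r > 0$ small the conclusion follows by a continuity/perturbation argument: $g$ is jointly continuous in $(\alpha, r)$, and by strict convexity together with the superlinear growth of $g$ at both endpoints, the minimization can be pinned to a compact neighbourhood $K$ of $\alpha_0^*$ for all $r$ near $0$. On $K$, uniform continuity in $r$ yields $\min_{\alpha \in K} g(\alpha, r) < 0$ for $r$ in a right-neighbourhood of $0$, which combined with the boundary behaviour gives two distinct zeros. The main obstacle is precisely this last step: the domain $(-s_Y, \infty)$ is open, and one must confirm that the minimizer does not drift to the boundary as $r$ increases from $0$ (which could in principle let $m(r)$ jump upward). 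Strict convexity and the endpoint blow-up of $g$ make this routine, but the compactness step must be stated carefully to close the argument.
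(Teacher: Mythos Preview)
Your proof is correct and follows essentially the same approach as the paper: strict convexity of $\alpha \mapsto g(\alpha,r)$, blow-up at both endpoints of the domain, and the existence of an interior point where $g$ is negative. The only difference is in the handling of the case $r>0$: the paper, having found a fixed $\varepsilon>0$ with $g(-\varepsilon,0)<0$, simply invokes continuity in $r$ at that \emph{fixed} point to obtain $g(-\varepsilon,r)<0$ for all sufficiently small $r>0$, which immediately gives $m(r)\le g(-\varepsilon,r)<0$ and hence two zeros. This sidesteps entirely the compactness concern you raise about the minimizer drifting; you do not need to track the minimizer at all.
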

    \begin{proof}
        First, we consider the case $r < 0$. The function
        \[f_r(\alpha) :=\alpha \beta+ M_U(r) M_Y(-\alpha) -1\]
        is convex and satisfies $f_r(0)= M_U(r)-1 <0$. Furthermore, $\lim_{\alpha \to \infty} f_r(\alpha) = \lim_{\alpha \to -\infty} f_r(\alpha)=\infty$. By continuity, there exists at least one root in $(-\infty, 0)$ and one root in $(0, \infty)$. By convexity, the corresponding roots are unique.
        For $r=0$, we have $f_0(0) = 0$ and $\frac{\partial}{\partial \alpha}f_0(0)= \beta - \mathbb{E}[Y]>0$. By this, there exists some $\varepsilon>0$ such that $f_0(-\varepsilon)<0$ and, by the same argumentation as before, we have that there exists a unique negative root of $f_0$. For $r>0$, we see that the function $f_r(\alpha)$ is also continuous in $r$ (as long as it is well defined). Consequently, there exists some $\delta >0$ such that for all $r< \delta$ we have $f_r(-\varepsilon)<0$. Again, by continuity and convexity in $\alpha$, we get the existence of two solutions to $f_r(\alpha)=0$.
    \end{proof}
\begin{definition}\label{defalpha}
    For fixed $r$, we define $\alpha(r)$ as the maximal solution to equation \eqref{maineq}.
    This is well defined for all $r \leq r_{\rm max}$, where $r_{\rm max}$ satisfies \[\min_{\alpha} \left(\alpha \beta+ M_U(r_{\rm max}) M_Y(-\alpha) -1\right) =0.\] Further, we define the function
    \begin{equation}\label{theta}\theta(r) := -cr-\alpha(r) \beta a .\end{equation}
\end{definition}
\begin{lem}\label{lem:alpha_differentiable}
The mapping $r \to \alpha(r)$ is concave and differentiable on $(-\infty, r_{\rm max})$. Furthermore, it satisfies $\alpha(0)=0$ and $r\alpha(r) <0$ for all $r \neq 0$ such that $\alpha(r)$ is well defined.\end{lem}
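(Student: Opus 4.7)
The plan is to establish the four claims in turn: first concavity, then differentiability via the implicit function theorem, then $\alpha(0)=0$ by inspection, and finally the sign property $r\alpha(r)<0$ from implicit differentiation at $r=0$ together with a continuity argument.

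For concavity, I would work with the sublevel set
\[S:=\bigl\{(r,\alpha):\alpha\beta+M_U(r)M_Y(-\alpha)-1\le 0\bigr\}.\]
Because $M_U(r)M_Y(-\alpha)>0$, any point of $S$ satisfies $\alpha<1/\beta$. On this half-plane, $S$ is equivalently described by the inequality
\[\log M_U(r)+\log M_Y(-\alpha)-\log(1-\alpha\beta)\le 0.\]
Now $\log M_U$ and $\log M_Y$ are cumulant generating functions, hence convex, and precomposition of the convex function $\log M_Y$ with the affine map $\alpha\mapsto-\alpha$ preserves convexity. The map $\alpha\mapsto\log(1-\alpha\beta)$ is concave on $\{\alpha<1/\beta\}$, so its negative is convex. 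Thus the left-hand side is a sum of convex functions of $(r,\alpha)$, which shows that $S$ is a convex subset of $\mathbb{R}^2$. By Definition~\ref{defalpha} and the convexity of $f_r$, for each admissible $r$ the fiber $\{\alpha:(r,\alpha)\in S\}$ is the interval bounded above by $\alpha(r)$. Consequently $\alpha(r)=\sup\{\alpha:(r,\alpha)\in S\}$ is the upper boundary of a convex set and is therefore concave.

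For differentiability on $(-\infty,r_{\rm max})$, I would apply the implicit function theorem to $F(r,\alpha):=\alpha\beta+M_U(r)M_Y(-\alpha)-1$ at $(r,\alpha(r))$. The partial derivative $\partial_\alpha F(r,\alpha)=\beta-M_U(r)M_Y'(-\alpha)$ coincides with $f_r'(\alpha(r))$, which is strictly positive exactly when $\alpha(r)$ is the strictly larger of the two roots of the convex function $f_r$; by Lemma~\ref{lem:distinct_alpha} and the definition of $r_{\rm max}$ (where the two roots merge at the minimum of $f_r$), this strict inequality holds throughout $(-\infty,r_{\rm max})$.

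The remaining two points are then short. Evaluating at $r=0$ gives $f_0(0)=M_Y(0)-1=0$, and $f_0'(0)=\beta-\mathbb{E}[Y]>0$ by \eqref{dodzal}, so $0$ is the larger root of $f_0$ and $\alpha(0)=0$. Implicit differentiation yields
\[\alpha'(0)=-\frac{M_U'(0)M_Y(0)}{\beta-M_U(0)M_Y'(0)}=-\frac{\mathbb{E}[U]}{\beta-\mathbb{E}[Y]}<0,\]
so $\alpha(r)<0$ for $r>0$ sufficiently small and $\alpha(r)>0$ for $r<0$ sufficiently small. To promote this to all admissible $r$, I would argue by continuity: if $\alpha(r_*)=0$ for some $r_*\neq 0$, then $F(r_*,0)=M_U(r_*)-1=0$, forcing $M_U(r_*)=1$, which is impossible for $r_*\neq 0$ since $U>0$ almost surely. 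Hence $\alpha$ cannot change sign on $(-\infty,0)$ or on $(0,r_{\rm max})$, and $r\alpha(r)<0$ for all $r\neq 0$ in the domain. The only slightly delicate point in the argument is the concavity claim; everything else is routine once the convex reformulation of $S$ is in hand.
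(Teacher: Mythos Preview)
Your proof is correct and follows the same overall architecture as the paper's, but the individual steps are executed differently and in several places more cleanly.

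For concavity, the paper shows that $f(r,\alpha)=\alpha\beta+M_U(r)M_Y(-\alpha)-1$ is jointly convex by computing the Hessian and invoking log-convexity of the moment-generating functions to check that the determinant is nonnegative; it then argues directly that $\lambda\alpha(r)+(1-\lambda)\alpha(s)$ lies in the sublevel set $\{f\le 0\}$ and hence below the maximal root. Your route---rewriting the sublevel set via $\log M_U(r)+\log M_Y(-\alpha)-\log(1-\alpha\beta)\le 0$ and using convexity of cumulant generating functions together with concavity of $\log(1-\alpha\beta)$---reaches the same conclusion (convexity of $S$) without the Hessian bookkeeping. Both arguments ultimately rest on log-convexity of $M_U$ and $M_Y$, so the content is the same; your packaging is tidier.

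For differentiability, the paper goes through ``concave $\Rightarrow$ a.e.\ differentiable'' and then argues that the implicit-derivative formula $\alpha'(r)=-M_U'(r)M_Y(-\alpha(r))/(\beta-M_U(r)M_Y'(-\alpha(r)))$ is continuous on $(-\infty,r_{\max})$, hence $\alpha$ is everywhere differentiable. Your direct appeal to the implicit function theorem, with the observation that $\partial_\alpha F(r,\alpha(r))=f_r'(\alpha(r))>0$ because the two roots are distinct below $r_{\max}$, is shorter and equivalent.

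For the sign property, the paper takes the more direct path of evaluating $f_r(0)=M_U(r)-1$, which is positive for $r>0$ and negative for $r<0$, pinning down the sign of the larger root immediately. Your argument via $\alpha'(0)<0$ combined with the no-crossing observation $\alpha(r_*)=0\Rightarrow M_U(r_*)=1\Rightarrow r_*=0$ is a valid alternative; the paper's version is slightly more economical here.
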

\begin{proof}
    By the proof of Lemma \ref{lem:distinct_alpha}, we have that $f_0$ has a negative root and satisfies $f_0(0)=0$. Thus, $\alpha(0)=0$.
    For $r>0$, we have $f_r(0)= M_U(r)-1>0$, which gives that all roots must be negative and for $r<0$ $f_r(0)= M_U(r)-1<0$.
    Therefore there exists a positive root, and for all $r \neq 0$ for which $\alpha(r)$ is well defined, we have $r\alpha(r) <0$.

    To show the concavity of $\alpha(r)$, we first show that the function $f(r,\alpha):= f_r(\alpha)$ is convex as a function of $(r, \alpha)$ from $(-\infty, s_U) \times (-s_Y, \infty)$ to $\mathbb{R}$, where $s_Y$ and $s_U$ are defined in Assumptions \ref{assumption1} and \ref{assumption2}, respectively
    (it is even convex and proper as function from $\mathbb{R}^2 \to \mathbb{R}\cup \left\lbrace \infty \right\rbrace$ if we set $f(r,\alpha) = \infty$ for all $(r,\alpha)$ outside $(-\infty, s_U) \times (-s_Y, \infty)$). To show this, we consider the Hessian $$\mathcal{H}=\begin{bmatrix}
M_U''(r)M_Y(-\alpha) & -M_U'(r)M_Y'(-\alpha) \\
-M_U'(r)M_Y'(-\alpha) & M_Y''(-\alpha)M_U
\end{bmatrix},$$ which has only non-negative eigenvalues by the log-convexity of the moment generating functions.

If we now take some $r\geq s$ and $\lambda \in [0,1]$ such that $\alpha(r)$ is well defined, we find that also $\alpha(\lambda r + (1-\lambda)s)$ is well defined and satisfies $f(\lambda r + (1-\lambda) s, \alpha(\lambda r+ (1-\lambda) s)) =0.$ Moreover, since it is the maximal root, for all $\alpha > \alpha(\lambda r + (1-\lambda)s)$ we have $f(\lambda r + (1-\lambda )s, \alpha) >0$. By the convexity of the function $f$ we get \[
    f(\lambda r + (1-\lambda)s, \lambda \alpha(r) + (1-\lambda) \alpha(s)) \leq \lambda f(r,\alpha(r)) + (1-\lambda) f(s, \alpha(s)) = 0.
\]
Consequently, $\lambda \alpha(r) + (1-\lambda) \alpha(s) \leq \alpha(\lambda r+ (1-\lambda)s).$

We still have to show that the function $r \to \alpha(r)$ is differentiable. By concavity, it is differentiable almost everywhere. To be specific, everywhere except some countable set and at every other point, the one-sided limits exist, but do not coincide. Let $r$ be such that the derivative of $\alpha(r)$ exists. Then, we get that \[ \beta \alpha'(r) + M_U'(r)M_Y(-\alpha(r)) - \alpha'(r) M_U(r)M_Y'(-\alpha(r)) = 0,\] which is, if $\beta -  M_U(r)M_Y'(-\alpha(r)) \neq 0$, equivalent to \[ \alpha'(r) = -\frac{M_U'(r)M_Y(-\alpha(r))}{\beta -  M_U(r)M_Y'(-\alpha(r)) }.\] This is the case for all $r < r_{\rm max}$. In the case $r=r_{\rm max}$, the root $\alpha(r_{\rm max})$ also minimizes $f_{r_{\rm max}}(\alpha)$. Since this function is convex and differentiable in $\alpha$, we have $0= \frac{\partial}{\partial \alpha} f_{r_{\rm max}}(\alpha(r_{\rm max})) = \beta - M_U(r_{\rm max}) M_Y'(-\alpha(r_{\rm max}))$. For all $r< r_{\rm max}$, the root $\alpha(r)$ is not the minimizer; therefore, $\alpha'(r)$ is well defined and continuous. By the continuity of the derivative, we find that $\alpha(r)$ is differentiable at every point $r < r_{\rm max}$.
\end{proof}
We recall that the function $\theta$ is defined in \eqref{theta}.
\begin{lem}\label{lem:theta_convex}
    The function $\theta(r)$ is convex, differentiable and satisfies $\theta(0)=0$ and $\theta'(0)<0$.
    \end{lem}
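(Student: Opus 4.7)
The plan is to read all four properties directly off the formula $\theta(r) = -cr - \alpha(r)\beta a$ by invoking Lemma \ref{lem:alpha_differentiable} and then matching against the net profit condition in Assumption \ref{assumption1}.

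First I would observe that $\theta(0) = 0$ is immediate from $\alpha(0) = 0$, and that differentiability of $\theta$ on $(-\infty, r_{\rm max})$ follows at once from the differentiability of $\alpha$ proved in Lemma \ref{lem:alpha_differentiable}, since $\theta$ is an affine function of $r$ and $\alpha(r)$.

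For convexity, I would note that the coefficient of $\alpha(r)$ in $\theta(r)$ is $-\beta a < 0$, so $-\beta a \cdot \alpha(r)$ is convex because $\alpha$ is concave (Lemma \ref{lem:alpha_differentiable}); adding the linear term $-cr$ preserves convexity.

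The only part that requires a small computation is the sign of $\theta'(0)$. From the explicit formula for $\alpha'(r)$ derived in the proof of Lemma \ref{lem:alpha_differentiable},
\[
\alpha'(r) = -\frac{M_U'(r) M_Y(-\alpha(r))}{\beta - M_U(r) M_Y'(-\alpha(r))},
\]
and using $\alpha(0)=0$, $M_U(0)=M_Y(0)=1$, $M_U'(0)=\mathbb{E}[U]$, $M_Y'(0)=\mathbb{E}[Y]$, I get
\[
\alpha'(0) = -\frac{\mathbb{E}[U]}{\beta - \mathbb{E}[Y]},
\]
which is well defined and negative by \eqref{dodzal}. Then
\[
\theta'(0) = -c - \alpha'(0)\beta a = -c + \frac{a\beta\,\mathbb{E}[U]}{\beta - \mathbb{E}[Y]},
\]
and this is strictly negative precisely by the net profit condition of Assumption \ref{assumption1}. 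The main (mild) obstacle is just bookkeeping: making sure that $\beta - \mathbb{E}[Y] > 0$ so the formula for $\alpha'(0)$ is legitimate, and that the minimizer condition $\beta = M_U(r) M_Y'(-\alpha(r))$ fails at $r=0$ so that $r=0$ is interior to the domain where the differentiation formula applies. Both follow from \eqref{dodzal} together with $\alpha(0)=0$.
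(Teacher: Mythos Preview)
Your proof is correct and follows essentially the same approach as the paper: both derive convexity and differentiability of $\theta$ from the concavity and differentiability of $\alpha$ established in Lemma \ref{lem:alpha_differentiable}, compute $\theta'(0)$ via the explicit formula for $\alpha'(0)$, and conclude negativity from the net profit condition. Your version is slightly more explicit about the bookkeeping (why $\beta - \mathbb{E}[Y] > 0$ and why $r=0$ lies strictly below $r_{\rm max}$), but the argument is otherwise identical.
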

    \begin{proof}
        Since $\theta(r)=-cr- \alpha(r) a \beta$ is the sum of two differentiable and convex functions, it is differentiable and convex as well and $\theta(0) =0$. The derivative at the point $r=0$ is
        \[
            \theta'(0)=-c - \alpha'(r) a \beta = -c + \frac{\beta a \mathbb{E}[U]}{\beta - \mathbb{E}[Y]},
        \]
        which is negative by the net profit condition. This completes the proof.
    \end{proof}
For further analysis we will also need the following important assumption.
\begin{assumption}\label{assumption3}
    From now on, we assume that there exists a positive solution $R$ of
    \begin{equation}\label{adjustment2} \theta(r)=0.
    \end{equation}
    for the function $\theta$ defined in \eqref{theta}.
     Further, we assume that there exists an $\varepsilon>0$ such that $M_U(R+\varepsilon)$, $M_Y(-\alpha(R+\varepsilon))$ are finite.
\end{assumption}

\begin{thm}\label{localmartingale1}
    The process
    \begin{equation}\label{martingale}
    M^{(r)}_t:= \exp(ru+\alpha(r)\lambda) \exp(-rX_t-\alpha(r)\lambda_t - \theta(r) t) \end{equation}
    is a non-negative local martingale for all $0 \leq r \leq R+ \varepsilon$.\end{thm}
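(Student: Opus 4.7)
The plan is to reduce the theorem to the Dynkin identity for the extended generator $\mathcal{A}$ of the PDMP $\{(X_t,\lambda_t,t)\}_{t\ge 0}$. Writing $M^{(r)}_t = e^{ru+\alpha(r)\lambda}\cdot h_r(X_t,\lambda_t,t)$ with $h_r(x,\lambda,t) := \exp(-rx - \alpha(r)\lambda - \theta(r)t)$, non-negativity is immediate from the exponential form, and since the prefactor is a deterministic positive constant, it suffices to verify that $h_r\in\mathcal{D}(\mathcal{A})$ and $\mathcal{A}h_r\equiv 0$. The characterization of the extended generator recorded after \eqref{generator} together with \cite[p.\,449]{Rolski.1999} then yields the local martingale property.

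The identity $\mathcal{A}h_r\equiv 0$ is precisely the algebraic motivation behind Definition \ref{defalpha}. Substituting $h_r$ into the generator \eqref{generator} and factoring out the strictly positive $h_r$ produces
\[
\mathcal{A}h_r(x,\lambda,t) = h_r(x,\lambda,t)\,\Bigl(\bigl[-cr - \alpha(r)\beta a - \theta(r)\bigr] + \lambda\bigl[\alpha(r)\beta + M_U(r)M_Y(-\alpha(r)) - 1\bigr]\Bigr),
\]
and both bracketed expressions vanish separately by equations \eqref{firsteq}--\eqref{maineq}, which are satisfied by $\alpha(r)$ and $\theta(r)$ in Definition \ref{defalpha}. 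These quantities are well defined on $[0,R+\varepsilon]$ because $M_U(R+\varepsilon)$ and $M_Y(-\alpha(R+\varepsilon))$ are finite by Assumption \ref{assumption3}.

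Verifying $h_r\in\mathcal{D}(\mathcal{A})$ amounts to two checks. Absolute continuity of $t\mapsto h_r(x+ct,(\lambda-a)e^{-\beta t}+a,s+t)$ is immediate from $h_r\in C^\infty$. For the integrability condition on the jumps, observe that $X_{T_k}-X_{T_k-}=-U_k$ and $\lambda_{T_k}-\lambda_{T_k-}=Y_k$, so the $k$-th summand equals $h_r(X_{T_k-},\lambda_{T_k-},T_k)\bigl|e^{rU_k-\alpha(r)Y_k}-1\bigr|$. Taking expectation against the Hawkes compensator $\lambda_s\,ds$ and using the independence of $(U_k,Y_k)$ from $\mathcal{F}_{T_k-}$, the required expectation factors into
\[
\mathbb{E}\bigl[\,|e^{rU-\alpha(r)Y}-1|\,\bigr]\cdot \mathbb{E}_{(u,\lambda)}\!\left[\int_0^t h_r(X_s,\lambda_s,s)\,\lambda_s\,ds\right].
\]
The first factor is finite by Assumption \ref{assumption3}: since $\alpha'(0)=-\mathbb{E}[U]/(\beta-\mathbb{E}[Y])<0$ and $\alpha$ is concave (Lemma \ref{lem:alpha_differentiable}), the map $r\mapsto -\alpha(r)$ is non-negative and non-decreasing on $[0,R+\varepsilon]$, so $M_U(r)M_Y(-\alpha(r))\le M_U(R+\varepsilon)M_Y(-\alpha(R+\varepsilon))<+\infty$.

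The main obstacle is controlling the second factor $\mathbb{E}_{(u,\lambda)}[\int_0^t h_r(X_s,\lambda_s,s)\,\lambda_s\,ds]$ for each fixed $t$. The plan is to localize by $\tau_n:=\inf\{s\ge 0: N_s\ge n\}$, exploiting the pathwise bounds $-rX_s\le r\sum_{k\le N_s}U_k - r(u+cs)$ and $0<\lambda_s\le\lambda+\sum_{k\le N_s}Y_k$ to obtain an estimate on $\{s\le\tau_n\}$ in terms of sums of at most $n$ light-tailed summands, integrable by Assumption \ref{assumption3}. The Dynkin formula then furnishes a true martingale on $[0,\tau_n\wedge t]$, and non-explosion of the Hawkes process (ensured by \eqref{assh} and \eqref{dodzal}) gives $\tau_n\nearrow+\infty$ almost surely, yielding the local martingale property of $M^{(r)}_t$; the global integrability required by the domain condition can then be recovered by monotone convergence from the truncated estimates.
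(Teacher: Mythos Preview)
Your approach is essentially the paper's: verify $\mathcal{A}h_r\equiv 0$ via \eqref{firsteq}--\eqref{maineq} and control the jump sums by the pathwise estimate $h_r(X_{T_i},\lambda_{T_i},T_i)\le\exp\bigl(r\sum_{j\le i}U_j-\alpha(r)\sum_{j\le i}Y_j\bigr)$. The paper streamlines this by invoking Davis's Theorem~(26.14) and Remark~(26.16), which require only $\mathbb{E}\bigl[\sum_{i=1}^{n}|h_r(X_{T_i},\lambda_{T_i},T_i)-h_r(X_{T_i-},\lambda_{T_i-},T_i)|\bigr]<\infty$ for each fixed $n$; this is exactly your localization by $\tau_n=T_n$, and makes the detour through the compensator integral $\int_0^t h_r(X_s,\lambda_s,s)\,\lambda_s\,\mathrm{d}s$ unnecessary.

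Your final sentence, however, should be dropped. The claim that ``the global integrability required by the domain condition can then be recovered by monotone convergence from the truncated estimates'' is not supported by the bounds you have: from \eqref{maineq} one has $M_U(r)M_Y(-\alpha(r))=1-\alpha(r)\beta>1$ for $r>0$, so the $n$-jump estimate $\sum_{i=1}^n(M_U(r)M_Y(-\alpha(r)))^i$ diverges as $n\to\infty$, and monotone convergence produces nothing finite. This does not damage the theorem---your localization already delivers the local martingale property, which is all that is asserted---but as written you are claiming something your argument does not establish. Either delete the sentence, or replace the domain criterion stated after \eqref{generator} by Davis's weaker $n$-jump criterion from the outset, as the paper does.
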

    \begin{proof}
        Fix $r \leq R+\varepsilon$ arbitrary and define the function \[h_r(x,\tilde \lambda, t) :=\exp(-r(x-u)-\alpha(r)(\tilde\lambda-\lambda) - \theta(r) t).\]
        We show that this function is in the domain of the extended generator of our PDMP and satisfies $\mathcal{A}h_r(x,\tilde\lambda,t)=0$, which gives us that this is a local martingale. The function $h_r$ is absolutely continuous.
        Hence, by Theorem (26.14) and Remark (26.16) of \cite{Davis.1993}, we only have to show that for all $n \in \mathbb N$ \[ \mathbb{E}_{(u,\lambda)} \left[ \sum_{i=1}^n \left\vert h_r(X_{T_i}, \lambda_{T_i}, T_i) - h_r(X_{T_i-}, \lambda_{T_i-}, T_i)\right\vert \right] < +\infty.\] This is obviously satisfied for $r \leq 0$. For the case $r>0$, we observe that $\alpha(r) <0$ and the compensator of the jump process $\{N_t\}_{t\geq 0}$ is given by \begin{align*}
            \Lambda_t:=\int_0^t \lambda_s \, \mathrm{d}s &= at + \frac{1}{\beta} (\lambda_0-a)(1-e^{- \beta t}) + \frac{1}{\beta} \sum_{k=1}^{N_t} Y_k (1-e^{-\beta (t-T_k)}) \\&= at + \frac{1}{\beta} \sum_{k=1}^{N_t} Y_k + \frac{1}{\beta}\lambda_0 - \frac{1}{\beta}\lambda_t.
        \end{align*}
Consequently, we have for all $i \leq n$,
\begin{multline*}
-r (X_{T_i}-u) - \alpha(r) (\lambda_{T_i}-\lambda) - \theta (r) T_i \\ = -rc T_i +r \sum_{j=1}^{i} U_j - \alpha(r)\left( \sum_{j=1}^{i} Y_j + \beta a  T_i - \beta \Lambda_{T_i}\right) + crT_i + a \alpha(r) \beta T_i \\\leq r \sum_{j=1}^{i} U_j - \alpha(r) \sum_{j=1}^{i} Y_j.
\end{multline*}

Further, observe that \[ h_r(X_{T_i}, \lambda_{T_i}, T_i) = h_r(X_{T_i-}, \lambda_{T_i-}, T_i) \exp\left(rU_i - \alpha(r) Y_i \right) > h_r(X_{T_i-}, \lambda_{T_i-}, T_i).\]
Using this, we get that
\begin{align*}
            &\mathbb{E}_{(u,\lambda)}\left[\sum_{i=1}^{n} \left\vert h_r(X_{T_i}, \lambda_{T_i}, T_i) -h_r(X_{T_i-}, \lambda_{T_i-}, T_i)\right\vert \right]
            \\& \leq 2\,\mathbb{E}_{(u,\lambda)}\left[\sum_{i=1}^{n}  h_r(X_{T_i}, \lambda_{T_i}, T_i) \right] \leq  2 \sum_{i=1}^n\mathbb{E}_{(u,\lambda)}\left[ \exp \left(r \sum_{j=1}^{i} U_j - \alpha(r) \sum_{j=1}^{i} Y_j\right)\right] \\& =2\sum_{i=1}^n M_U(r)^i M_Y(-\alpha(r))^i = 2M_U(r)M_Y(-\alpha(r)) \frac{M_U(r)^{n} M_Y(-\alpha(r))^{n}-1}{M_U(r) M_Y(-\alpha(r)) - 1} < +\infty.
        \end{align*}
        By this, the function $h_r$ is in the domain of the extended generator and by the construction of $\alpha(r)$ and $\theta(r)$ it satisfies $\mathcal{A}h_r(x,\lambda,t)=0$.
        This completes the proof.
    \end{proof}
\begin{thm}\label{localmartingale2}
Let $r<R+\varepsilon$ for some $\varepsilon >0$. Then, $M^{(r)}=\{ M^{(r)}_t\}_{t \geq 0}$ defined in \eqref{martingale} is a true martingale with expectation $1$.
\end{thm}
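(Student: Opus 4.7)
The plan is to promote the non-negative local martingale $M^{(r)}$ from Theorem \ref{localmartingale1} to a true martingale by showing that $\mathbb{E}_{(u,\lambda)}[M^{(r)}_t]=1$ for every $t\geq 0$. Since every non-negative local martingale is automatically a supermartingale, Fatou already gives $\mathbb{E}_{(u,\lambda)}[M^{(r)}_t]\leq M^{(r)}_0=1$; once the reverse inequality is obtained, equality of means for a supermartingale forces the full martingale identity $\mathbb{E}_{(u,\lambda)}[M^{(r)}_t\mid\mathcal{F}_s]=M^{(r)}_s$.

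Fix $t>0$ and pick a localizing sequence $\tau_n\uparrow\infty$ for $M^{(r)}$ (for instance the jump times $T_n$ of $\{N_t\}_{t\geq 0}$, or $\tau_n=\inf\{s:M^{(r)}_s\geq n\}$). Each stopped process $M^{(r)}_{\cdot\wedge\tau_n}$ is then a uniformly integrable martingale, so $\mathbb{E}_{(u,\lambda)}[M^{(r)}_{t\wedge\tau_n}]=1$, and $M^{(r)}_{t\wedge\tau_n}\to M^{(r)}_t$ almost surely as $n\to\infty$. By the Vitali convergence theorem it suffices to verify that $\{M^{(r)}_{t\wedge\tau_n}\}_{n\in\mathbb{N}}$ is uniformly integrable, and by the de la Vallée-Poussin criterion this reduces to exhibiting some $\delta>0$ with
\begin{equation*}
\sup_{n\in\mathbb{N}}\mathbb{E}_{(u,\lambda)}\bigl[(M^{(r)}_{t\wedge\tau_n})^{1+\delta}\bigr]<\infty.
\end{equation*}

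Assumption \ref{assumption3} is exactly what supplies such a $\delta$: since $r<R+\varepsilon$, I can take $\delta>0$ small enough that $(1+\delta)r<R+\varepsilon$, and then Theorem \ref{localmartingale1} makes $M^{((1+\delta)r)}$ itself a non-negative local martingale, hence a supermartingale with $\mathbb{E}_{(u,\lambda)}[M^{((1+\delta)r)}_{t\wedge\tau_n}]\leq 1$ by optional stopping at the bounded time $t\wedge\tau_n$. A direct algebraic manipulation of the definition \eqref{martingale} yields, for every $s\geq 0$, the pointwise identity
\begin{equation*}
(M^{(r)}_s)^{1+\delta}=M^{((1+\delta)r)}_s\cdot\exp\bigl(A_\delta(\lambda-\lambda_s)-B_\delta\,s\bigr),
\end{equation*}
where $A_\delta:=(1+\delta)\alpha(r)-\alpha((1+\delta)r)$ and $B_\delta:=(1+\delta)\theta(r)-\theta((1+\delta)r)$. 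Concavity of $\alpha$ with $\alpha(0)=0$ (Lemma \ref{lem:alpha_differentiable}) gives $A_\delta\geq 0$, and convexity of $\theta$ with $\theta(0)=0$ (Lemma \ref{lem:theta_convex}) gives $B_\delta\leq 0$. Since the intensity satisfies $\lambda_s\geq a$ pathwise and $s\leq t$, the random exponential factor is dominated by the finite constant $C_{t,\delta}:=\exp\bigl(A_\delta(\lambda-a)-B_\delta\,t\bigr)$.

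Putting the pieces together, $(M^{(r)}_{s\wedge\tau_n})^{1+\delta}\leq C_{t,\delta}\,M^{((1+\delta)r)}_{s\wedge\tau_n}$ holds pointwise on $[0,t]$; taking expectations gives $\sup_n\mathbb{E}_{(u,\lambda)}[(M^{(r)}_{t\wedge\tau_n})^{1+\delta}]\leq C_{t,\delta}<\infty$, uniform integrability follows, and letting $n\to\infty$ yields $\mathbb{E}_{(u,\lambda)}[M^{(r)}_t]=1$. The main obstacle is the moment comparison step: one must verify the correct signs of $A_\delta$ and $B_\delta$ and then use the a priori lower bound $\lambda_s\geq a$ together with $s\leq t$ to strip the randomness out of the exponential factor. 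This is precisely where the \emph{extra room} $\varepsilon$ in Assumption \ref{assumption3} is indispensable, since it permits choosing $\delta>0$ so that $(1+\delta)r$ still lies in the range where Theorem \ref{localmartingale1} applies.
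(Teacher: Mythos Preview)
Your argument is correct and follows essentially the same route as the paper's proof: both bound $(M^{(r)}_{t\wedge\tau_n})^{1+\delta}$ by a deterministic constant times the supermartingale $M^{((1+\delta)r)}_{t\wedge\tau_n}$, using concavity of $\alpha$ with $\alpha(0)=0$ to control the $\lambda$-term and the bound $s\leq t$ to control the time-term, then invoke de~la~Vall\'ee--Poussin. The only cosmetic differences are that the paper substitutes $\theta(r)=-cr-\alpha(r)\beta a$ and uses merely $\lambda_s>0$ rather than your sharper $\lambda_s\geq a$, whereas you keep $B_\delta$ separate and appeal directly to the convexity of $\theta$ from Lemma~\ref{lem:theta_convex}; both lead to the same uniform bound.
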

\begin{proof}
Fix $r< R+\varepsilon$ and let $\{ \varrho_n \}_{n \in \mathbb{N}}$ be a localizing sequence of stopping times for the local martingale $\{ M^{(r)}_t \}_{t \geq 0}$. Then, by Lemma 2.2.2 of \cite{Fleming.1991}, we have that $\{ M^{(r)}_t \}_{t \geq 0}$ is a martingale if for any fixed $t$, the family $\mathcal{X} = \{ M^{(r)}_{t \wedge \varrho_n} \}_{n\in \mathbb{N}}$ is uniformly integrable. By de La Vallée Poussin’s Theorem, a family of random variables $\{ Y_n \}_{n \in A}$ is uniformly integrable if there exists a monotone increasing convex function $G(t)$, satisfying $\lim_{t\to \infty} \frac{G(t)}{t}=\infty$ and $\sup_{n \in A} \mathbb{E}\left[G(\left\vert Y_n\right\vert )\right]< +\infty$.

Since $r< R+\varepsilon$, there exists a $\delta >0$ such that $r(1+\delta) < R+\varepsilon$ and $\{M_t^{(r(1+\delta))}\}_{t \geq 0}$ is well-defined. Since every non-negative local martingale with integrable initial value is a supermartingale, we have that for all $t\geq 0$ that $M^{(r(1+\delta))}_t$ is integrable with expectation less or equal $M^{(r(1+\delta))}_0 =1$. \\
Let now $t$ and $n$ be arbitrary but fixed. Then, we have that \begin{multline*}
\left(M^{(r)}_{t \wedge\varrho_n}\right)^{1+\delta}=\exp\left( -r(1+\delta)(X_{t \wedge\varrho_n}-u) - \alpha(r) (1+\delta) (\lambda_{t \wedge\varrho_n}-\lambda) \right) \\\times \exp \left( cr(1+\delta) t\wedge\varrho_n + a \beta \alpha(r) (1+\delta) t \wedge\varrho_n\right) \\=
M^{(r(1+\delta))}_{t \wedge\varrho_n} \,\exp\left((\alpha(r(1+\delta)) - \alpha(r)(1+\delta)) \lambda_{t \wedge\varrho_n} \right)  \\ \times \exp\left(( \alpha(r)(1+\delta)-\alpha(r(1+\delta))) (\lambda + a \beta t \wedge \varrho)\right).
\end{multline*}
If we can show that $\alpha(r(1+\delta)) - \alpha(r)(1+\delta)\leq 0$, then we have by the positivity of $\lambda_{t \wedge \varrho_n}$ that \[ \exp\left((\alpha(r(1+\delta)) - \alpha(r)(1+\delta)) \lambda_{t \wedge\varrho_n} \right) \leq 1,\]
and, since $t\wedge\varrho_n \leq t$, we would have that
\begin{align*}
    &\exp\left( \alpha(r)(1+\delta)-\alpha(r(1+\delta))) (\lambda_0 + a \beta t \wedge \varrho)\right)\\&\quad \leq \exp\left(( \alpha(r)(1+\delta)-\alpha(r(1+\delta))) \right)  \exp\left((\lambda_0 + a \beta t )\right),
\end{align*}
which is deterministic and finite. \\
To show this, we will use the fact that $\alpha(r)$ is concave, differentiable and satisfies $\alpha(0)=0$; see Lemma \ref{lem:alpha_differentiable}. By this, we get that \begin{multline*}
\alpha(r(1+\delta)) - \alpha(r)(1+\delta) = \alpha(r(1+\delta)) - \alpha(r)- \alpha(r)\delta \leq \alpha'(r) r\delta - \alpha(r) \delta \\ = \alpha'(r) r\delta  + \delta ( \alpha(0) - \alpha(r)) \leq \delta \alpha'(r)r \delta + \delta \alpha'(r) (-r) =0.
\end{multline*}
This gives that
\begin{align*}
&\mathbb{E}_{(u,\lambda)} \left[ \left(M^{(r)}_{t \wedge\varrho_n}\right)^{1+\delta} \right] \leq \mathbb{E}_{(u,\lambda)}
\left[ \left(M^{(r(1+\delta))}_{t \wedge\varrho_n}\right) \right]
\\&\qquad\times
 \exp\left( \alpha(r)(1+\delta)-\alpha(r(1+\delta))) (\lambda_0 + a \beta t )\right) \\&\quad \leq \exp \left( \alpha(r)(1+\delta)-\alpha(r(1+\delta))) (\lambda_0 + a \beta t )\right).
\end{align*}
This bound is independent of $n$ and finite for fixed $t$. Hence, taking the supremum gives us that \[ \sup_{n \in \mathbb{N}} \mathbb{E}_{(u,\lambda)} \left[ \left(M^{(r)}_{t \wedge\varrho_n}\right)^{1+\delta} \right] < +\infty.\] Therefore, $ \mathcal{X}$ is uniformly integrable and the process $\{ M^{(r)}_t \}_{t \geq 0}$ is a true martingale with expectation $M^{(r)}_0 =1$.
\end{proof}
\begin{definition}\label{newmeasuredef}
    Let $R+\varepsilon > r \geq 0$ for some $\varepsilon >0$.
    Then, we define the measure $\mathbb{Q}^{(r)}$ by \[ \mathbb{Q}^{(r)} \left[A\right]= \mathbb{E}_{(u,\lambda)} \left[I_{A} M_t^{(r)}\right], \, \forall A \in \mathcal{F}_t. \]
\end{definition}
    \begin{lem}\label{newgenerator}
Under the new measure $\mathbb{Q}^{(r)}$, the multivariate process $\{ (X_t, \lambda_t, t)\}_{t\geq 0}$ is again a PDMP with generator \begin{multline*}
    \mathcal{A}^{(r)} f(x, \lambda, t) = c \frac{\partial}{\partial x}f(x,\lambda, t) + \beta (a-\lambda) \frac{\partial}{\partial \lambda}f(x,\lambda,t) + \frac{\partial}{\partial t}f(x,\lambda,t)\\ + \lambda \int_0^\infty \int_0^\infty e^{ru} e^{-\alpha(r) y} \left(f(x-u,\lambda+y,t)-f(x,\lambda,t)\right)) \, F_U(\mathrm{d}u) \, F_Y(\mathrm{d}y)  .
\end{multline*}
    \end{lem}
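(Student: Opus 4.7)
The plan is to verify the form of the generator $\mathcal{A}^{(r)}$ via a direct computation combined with the standard Girsanov-type identification of generators under an exponential change of measure. Writing $h_r(x,\lambda,t):=\exp(-rx-\alpha(r)\lambda-\theta(r)t)$, so that $M_t^{(r)}=e^{rX_0+\alpha(r)\lambda_0}\,h_r(X_t,\lambda_t,t)$, the key algebraic identity I will establish is
\[
\mathcal{A}(h_r\,g)(x,\lambda,t)=h_r(x,\lambda,t)\,\mathcal{A}^{(r)}g(x,\lambda,t)
\]
for $g$ in the domain of $\mathcal{A}$ such that $h_r g$ also lies in $\mathcal{D}(\mathcal{A})$.

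First I would expand $\mathcal{A}(h_r g)$ using the original generator formula \eqref{generator}, applying the ordinary product rule to the three drift terms and using the multiplicative identity $h_r(x-u,\lambda+y,t)=h_r(x,\lambda,t)\,e^{ru-\alpha(r)y}$ inside the jump integral. Collecting all terms proportional to $g(x,\lambda,t)$ produces the factor
\[
h_r\,\bigl[-cr-\alpha(r)\beta(a-\lambda)-\theta(r)+\lambda\bigl(M_U(r)M_Y(-\alpha(r))-1\bigr)\bigr],
\]
which vanishes identically by \eqref{firsteq} and \eqref{maineq}; this is the same cancellation that underlies Theorem \ref{localmartingale1}, where it gave $\mathcal{A}h_r=0$. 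The remaining terms give exactly $h_r$ times the right-hand side of the formula claimed for $\mathcal{A}^{(r)}$.

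Next I would translate the identity $\mathcal{A}(h_r g)=h_r\mathcal{A}^{(r)}g$ into the $\mathbb{Q}^{(r)}$-martingale problem. Since $\{(X_t,\lambda_t,t)\}$ is piecewise deterministic, the process $t\mapsto\int_0^t\mathcal{A}^{(r)}g(X_s,\lambda_s,s)\,\mathrm{d}s$ is absolutely continuous and therefore has zero quadratic covariation with the semimartingale $M^{(r)}$. Applying the integration-by-parts formula together with Dynkin's formula applied to the product $h_r g$, I obtain that
\[
M_t^{(r)}\Bigl[g(X_t,\lambda_t,t)-\int_0^t\mathcal{A}^{(r)}g(X_s,\lambda_s,s)\,\mathrm{d}s\Bigr]
\]
is a $\mathbb{P}$-local martingale. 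By Definition \ref{newmeasuredef} and the standard Girsanov-type change-of-measure lemma, this is equivalent to $g(X_t,\lambda_t,t)-\int_0^t\mathcal{A}^{(r)}g(X_s,\lambda_s,s)\,\mathrm{d}s$ being a $\mathbb{Q}^{(r)}$-local martingale, so $\mathcal{A}^{(r)}$ is the extended generator of $\{(X_t,\lambda_t,t)\}$ under $\mathbb{Q}^{(r)}$.

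Finally, to conclude that the process is again a PDMP, I would read off the structure of $\mathcal{A}^{(r)}$: the drift part in $(x,\lambda,t)$ is unchanged, and the jump kernel equals $\lambda\,M_U(r)M_Y(-\alpha(r))\cdot\frac{e^{ru-\alpha(r)y}}{M_U(r)M_Y(-\alpha(r))}F_U(\mathrm{d}u)F_Y(\mathrm{d}y)$, a bona fide state-dependent intensity times a tilted mark distribution (finite by Assumption \ref{assumption3}, since $r<R+\varepsilon$). This is precisely the generator of a PDMP in the sense of \cite{Davis.1993}. The main technical obstacle will be the domain bookkeeping: I need to check that $h_r g\in\mathcal{D}(\mathcal{A})$ for a large enough class of $g$, which amounts to verifying the summability condition from \cite[p.~449]{Rolski.1999} for the products at the jump epochs; this is handled in the same spirit as the estimate in the proof of Theorem \ref{localmartingale1}, using the finiteness of $M_U(R+\varepsilon)$ and $M_Y(-\alpha(R+\varepsilon))$ from Assumption \ref{assumption3}.
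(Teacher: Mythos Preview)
Your computation is correct and the identity $\mathcal{A}(h_r g)=h_r\,\mathcal{A}^{(r)}g$, together with the Girsanov-type passage to the $\mathbb{Q}^{(r)}$-martingale problem, is precisely the mechanism that identifies $\mathcal{A}^{(r)}$. The paper's own proof is a one-line citation of Example~5.2 in \cite{Palmowski.2002}, which establishes exactly this result for exponential changes of measure of PDMPs in full generality; what you have written is an explicit unfolding of that argument in the present setting. The advantage of your route is that it is self-contained and makes transparent how the defining equations \eqref{firsteq}--\eqref{maineq} kill the zero-order term, whereas the paper's route avoids repeating standard material. Your caveat about the domain bookkeeping is well placed: to be fully rigorous you would need to verify that $h_r g\in\mathcal{D}(\mathcal{A})$ for a determining class of $g$ (e.g.\ bounded $C^1$ functions of compact support in $(x,\lambda)$, extended by localisation), which is exactly the kind of technicality that \cite{Palmowski.2002} handles once and for all.
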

        \begin{proof}
            This follows directly from Example 5.2 of \cite{Palmowski.2002}, where exactly this kind of exponential change of measures for PDMPs is studied.
        \end{proof}
 \begin{lem}\label{driftpostive}
Under the new measure $\mathbb{Q}^{(R)}$, ruin occurs almost surely.
    \end{lem}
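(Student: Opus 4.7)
The plan is to show that under $\mathbb{Q}^{(R)}$ the surplus process has a strictly negative long-run drift, so that $X_t\to-\infty$ almost surely, which forces $\tau<+\infty$ almost surely.

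The first step is to apply the generator $\mathcal{A}^{(R)}$ from Lemma \ref{newgenerator} to the coordinate functions $f(x,\lambda,t)=x$ and $f(x,\lambda,t)=\lambda$. A direct calculation gives
\[
\mathcal{A}^{(R)}x \;=\; c-\lambda\,M_U'(R)\,M_Y(-\alpha(R)),\qquad \mathcal{A}^{(R)}\lambda \;=\; \beta a-D\lambda,
\]
where $D:=\beta-M_U(R)M_Y'(-\alpha(R))$. Implicit differentiation of the defining equation $\alpha(r)\beta+M_U(r)M_Y(-\alpha(r))=1$ at $r=R$ yields $\alpha'(R)D=-M_U'(R)M_Y(-\alpha(R))$, and Lemma \ref{lem:alpha_differentiable} ensures $D>0$ since $R<r_{\rm max}$. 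Consequently the intensity process, driven by a linear mean-reverting drift with rate $D>0$, has candidate stationary mean $\bar\lambda^{(R)}:=\beta a/D$.

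Next I would apply Dynkin's formula to $f(x,\lambda,t)=x$ to get
\[
X_t \;=\; u+ct-M_U'(R)M_Y(-\alpha(R))\int_0^t \lambda_s\,\mathrm{d}s+N_t,
\]
with $\{N_t\}_{t\geq 0}$ a zero-mean local martingale. Dividing by $t$, using the ergodic convergence $\tfrac{1}{t}\int_0^t\lambda_s\,\mathrm{d}s\to\bar\lambda^{(R)}$ under $\mathbb{Q}^{(R)}$, and the identity for $\alpha'(R)$ derived above, one finds
\[
\lim_{t\to\infty}\frac{X_t}{t} \;=\; c+\alpha'(R)\beta a \;=\; -\theta'(R).
\]
Since $\theta$ is convex with $\theta(0)=\theta(R)=0$, $R>0$ and $\theta'(0)<0$ by Lemma \ref{lem:theta_convex}, necessarily $\theta'(R)>0$ strictly; hence the drift is strictly negative and $X_t\to-\infty$ a.s., giving $\tau<+\infty$ almost surely.

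The main obstacle is justifying the ergodic step $\tfrac{1}{t}\int_0^t\lambda_s\,\mathrm{d}s\to\bar\lambda^{(R)}$ under $\mathbb{Q}^{(R)}$ without circularly invoking Lemma \ref{newmeasurerecc}. From Lemma \ref{newgenerator}, the process $\{\lambda_t\}_{t\geq 0}$ under $\mathbb{Q}^{(R)}$ is again a piecewise deterministic Hawkes-type intensity with the same deterministic drift $\beta(a-\lambda)$ and with jumps whose shock distribution is exponentially tilted to $e^{-\alpha(R)y}F_Y(\mathrm{d}y)/M_Y(-\alpha(R))$ and whose effective rate is multiplied by $M_U(R)$; by Assumption \ref{assumption3} the tilted shock has finite mean $M_Y'(-\alpha(R))/M_Y(-\alpha(R))$, and the stability bound $\beta>M_U(R)M_Y'(-\alpha(R))$ is exactly $D>0$. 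Therefore the Section \ref{sec:recurrence} arguments ($T$-process, $\nu$-irreducibility, bounded in probability on average) transfer verbatim, so $\{\lambda_t\}_{t\geq 0}$ is positive Harris recurrent with stationary mean $\bar\lambda^{(R)}$ and the ergodic theorem for positive Harris processes yields the required limit. Vanishing of $N_t/t$ can be controlled by an $L^2$-estimate using finiteness of $M_U(R+\varepsilon)$ and $M_Y(-\alpha(R+\varepsilon))$ from Assumption \ref{assumption3}.
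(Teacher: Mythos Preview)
Your approach is essentially the same as the paper's: both compute the long-run drift of $X_t$ under $\mathbb{Q}^{(R)}$ via the generator $\mathcal{A}^{(R)}$, identify it as $-\theta'(R)$, and then use convexity of $\theta$ together with $\theta(0)=\theta(R)=0$, $\theta'(0)<0$ to conclude $\theta'(R)>0$, hence ruin. The paper's proof is terser: it invokes the analogue of Lemma~\ref{lem:mean_surplus} under $\mathcal{A}^{(R)}$ to get $\lim_{t\to\infty}\mathbb{E}^{(R)}_{(u,\lambda)}[X_t]/t=-\theta'(R)$ and passes directly to the conclusion. Your version is more careful in that you upgrade this to an almost sure statement $X_t/t\to-\theta'(R)$ by Dynkin's formula plus the ergodic theorem for $\{\lambda_t\}$ under $\mathbb{Q}^{(R)}$; this is exactly what is needed to deduce $\tau<\infty$ a.s., and the paper leaves that step implicit. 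Your justification of positive Harris recurrence of $\{\lambda_t\}$ under $\mathbb{Q}^{(R)}$ (stability condition $D=\beta-M_U(R)M_Y'(-\alpha(R))>0$, then rerun Section~\ref{sec:recurrence}) is precisely the content of the next lemma in the paper (Lemma~\ref{Qrparamneters}), so there is no genuine circularity---you are simply anticipating it.
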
\label{newmeasurerecc}
        \begin{proof}
            Using the same ideas as in Lemma \ref{lem:mean_surplus} but with the alternative generator $\mathcal{A}^{(R)}$, it is easy to see that \[ \lim_{t \to \infty} \frac{\mathbb{E}^{(R)}_{(u,\lambda)}\left[X_t\right]}{t}= - \theta'(R).\]
            By the convexity of $\theta$ proved in Lemma \ref{lem:theta_convex} and the fact that there is some $r<R$ with $\theta(r)<0$, we have that $-\theta'(R) <0$. Consequently, ruin occurs almost surely under the new measure $\mathbb{Q}^{(R)}$.
        \end{proof}
    We will that under the new measure $\mathbb{Q}^{(r)}$, $\{ \lambda_t\}_{t \geq 0}$ is no longer the intensity of a Markovian marked Hawkes process $\{N_t\}_{t\geq 0}$.
     In fact its jumps have now intensity $\{ \lambda_t M_U(r)M_Y(-\alpha(r)) \}_{t \geq 0}$ that still preserves its recurrent behaviour.
    \begin{lem}\label{Qrparamneters}
        The process $\{ \lambda_t\}_{t \geq 0}$ is Harris recurrent under $\mathbb{Q}^{(r)}$.    \end{lem}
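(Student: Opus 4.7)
The plan is to recognise that under $\mathbb{Q}^{(r)}$, although $\{\lambda_t\}_{t\geq 0}$ is no longer a Hawkes intensity in the original parametrisation, a simple linear rescaling turns it into one; Theorem~\ref{thm:posrecc} can then be invoked directly.

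Reading off from Lemma~\ref{newgenerator}, under $\mathbb{Q}^{(r)}$ the process $\{\lambda_t\}_{t\geq 0}$ is a piecewise deterministic Markov process whose deterministic drift is unchanged, $\dot\lambda = -\beta(\lambda - a)$, whose jump intensity is $K\lambda_t$ with
\[
K := M_U(r)\, M_Y(-\alpha(r)) = 1 - \alpha(r)\beta
\]
(the last equality by \eqref{maineq}), and whose jump-size distribution is the Esscher tilt $\tilde F_Y(\mathrm dy) := e^{-\alpha(r)y} F_Y(\mathrm dy)/M_Y(-\alpha(r))$. Setting $\bar\lambda_t := K\lambda_t$, a direct computation gives $\dot{\bar\lambda} = -\beta(\bar\lambda - Ka)$ between jumps and jumps of size $K\tilde Y$, where $\tilde Y \sim \tilde F_Y$; in particular, jumps of $\bar\lambda$ occur at rate $\bar\lambda_t$. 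Hence $\{\bar\lambda_t\}_{t\geq 0}$ is itself the intensity of a Markovian marked Hawkes process with baseline $Ka$, decay parameter $\beta$ and shock distribution that of $K\tilde Y$.

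To apply Theorem~\ref{thm:posrecc} to $\{\bar\lambda_t\}_{t\geq 0}$ one then only has to verify the analogue of \eqref{dodzal}, namely
\[
\beta > \mathbb{E}^{(r)}[K \tilde Y] = M_U(r)\, M_Y'(-\alpha(r)).
\]
This is exactly $\partial_\alpha f_r(\alpha(r)) > 0$, where $f_r$ is the convex function introduced in Lemma~\ref{lem:distinct_alpha}. Since $\alpha(r)$ is the \emph{maximal} root of $f_r$ and, by Assumption~\ref{assumption3}, the hypothesis $r < R+\varepsilon$ keeps us strictly to the left of $r_{\rm max}$, the root $\alpha(r)$ lies strictly to the right of the minimiser of $f_r$, so the partial derivative there is strictly positive. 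The exponential-moment condition on $\tilde Y$ required for the machinery of Section~\ref{sec:recurrence} is inherited from that on $Y$ via $M_{\tilde Y}(s) = M_Y(s - \alpha(r))/M_Y(-\alpha(r))$, which is finite on a neighbourhood of the origin. Theorem~\ref{thm:posrecc} (together with the support lemmas immediately following it) then yields positive Harris recurrence of $\{\bar\lambda_t\}_{t\geq 0}$, and since $\lambda_t = \bar\lambda_t/K$ is an affine bijection of the state space the property transfers to $\{\lambda_t\}_{t\geq 0}$. The main obstacle is this verification of the new stability bound through the maximal-root characterisation of $\alpha(r)$; once that is settled, the entire Meyn--Tweedie / PDMP machinery of Section~\ref{sec:recurrence} applies to $\bar\lambda_t$ verbatim.
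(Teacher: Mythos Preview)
Your proof is correct and follows essentially the same route as the paper: rescale $\lambda_t$ by $K=M_U(r)M_Y(-\alpha(r))$ to obtain a genuine Markovian Hawkes intensity, verify the stability condition $\beta > M_U(r)M_Y'(-\alpha(r))$, and invoke Theorem~\ref{thm:posrecc}. The only noteworthy difference is in the verification of that stability bound: the paper deduces it from the sign of $\alpha'(r)$ via the derivative formula in Lemma~\ref{lem:alpha_differentiable}, whereas your argument---observing that $\partial_\alpha f_r(\alpha(r))>0$ because $\alpha(r)$ is the maximal root of a strictly convex function with two distinct roots for $r<r_{\max}$---is more direct and avoids the slight circularity in the paper's use of monotonicity.
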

        \begin{proof}
            At first, we show that, under the measure $\mathbb{Q}^{(r)}$, the process \linebreak $\{ M_U(r)M_Y(-\alpha(r)) \lambda_t \}_{t\geq 0}$ is the intensity of a Markovian marked Hawkes process $\{N_t\}_{t\geq 0}$.
            Indeed, from the form of the generator $\mathcal{A}^{(r)}$ given in Lemma \ref{newgenerator} we can conclude that the univariate process $\{ \lambda_t \}_{t \geq 0}$ is a Markov process with
            generator \[ \mathcal{A}^{(r),\lambda} f(\lambda) = \beta (a-\lambda) f'(\lambda) + \lambda M_u(r) M_Y(-\alpha(r)) \int_0^\infty (f(\lambda+y) - f(\lambda)) \, \tilde F_Y(\mathrm{d}y),\] where the distribution $\tilde F_Y$ is given by $\tilde F_Y(\mathrm{d}y) = \frac{e^{-\alpha(r)y}}{M_Y(-\alpha(r))}F_Y(\mathrm{d}y)$. Hence, under $\mathbb Q^{(r)}$, the process $\{ \lambda_t \}_{t \geq 0}$ has the form \[ \lambda_t = e^{-\beta t} (\lambda - a) + a + \sum_{i=1}^{N^{(r)}_t} \tilde Y_i e^{- \beta (t- T^{(r)}_i)}, \] where $\{ N^{(r)}_t\}_{t \geq 0}$ has the intensity process $\{ M_U(r) M_Y(-\alpha(r)) \lambda_t \}_{t \geq 0}$.

            Observe that the PDMP $\{ M_U(r)M_Y(-\alpha(r)) \lambda_t \}_{t\geq 0}$ can represented as follows
            \begin{multline*}
                M_U(r)M_Y(-\alpha(r)) \lambda_t = e^{-\beta t} ( M_U(r)M_Y(-\alpha(r))\lambda -  M_U(r)M_Y(-\alpha(r))a) \\+  M_U(r)M_Y(-\alpha(r))a + \sum_{i=1}^{N^{(r)}_t}  M_U(r)M_Y(-\alpha(r)) \tilde Y_i e^{- \beta (t- T^{(r)}_t},
            \end{multline*}  that is,
            the process $\{ N^{(r)}_t \}_{t \geq 0}$ is a Markovian Hawkes process.
            The parameters are given by decay parameter $\beta$, baseline intensity $a M_U(r)M_Y(-\alpha(r))$ and shock distribution $\tilde F_Y(z/(M_U(r)M_Y(-\alpha(r))))$. \\
            By Theorem \ref{thm:posrecc} and \eqref{dodzal}, if we can now show that
            \begin{equation}\label{inrecc}\beta > M_U(r)M_Y(-\alpha(r))\mathbb{E}^{\mathbb{Q}^{(r)}}\left[ Y\right] = M_U(r)M_Y(-\alpha(r))\mathbb{E}\left[ \tilde Y\right],\end{equation}
            then $\{ M_U(r)M_Y(-\alpha(r)) \lambda_t \}_{t\geq 0}$ returns to every point in $(a M_U(r)M_Y(-\alpha(r)), \infty)$ infinitely often. This implies that $\{ \lambda_t\}_{t \geq 0}$ visits every point in $(a,\infty)$ infinitely often.

            To prove \eqref{inrecc}, observe first that the expectation of $Y$ under our new measure is $\frac{M_Y'(-\alpha(r))}{M_Y(-\alpha(r))}$ and that the mapping $r \to \alpha(r)$ is monotone decreasing. Hence, by the proof of Lemma \ref{lem:alpha_differentiable} we have that $ - \frac{M_U'(r)M_Y(-\alpha(r))}{\beta - M_U(r)M_Y'(-\alpha(r))}\leq 0$, which implies that \[\beta - M_U(r)M_Y(-\alpha(r)) \mathbb E [\tilde Y] = \beta -M_U(r)M_Y'(-\alpha(r)) >0 .\] 
            By this, we have that our intensity process returns almost surely to every point in  $(aM_U(r)M_Y(-\alpha(r)), \infty)$.
        \end{proof}
\subsection{Cram\'er-Lundberg asymptotics and renewal arguments}\label{subsec:CL}
Now, fix an initial value $\lambda$ and let $S^\lambda_1, \ldots$ denote the recurrence times of the intensity process to the level $\lambda$, i.e. $\lambda_{S^\lambda_i} = \lambda$ for all $i$. Further, we define the renewal times $\{ \phi_i \}_{i \geq 1}$ by $\phi_1 = \min \left\lbrace \, S^\lambda_i: \, X_{S^\lambda_i}<u \, \right\rbrace$ and $\phi_j = \min \left\lbrace\, S^\lambda_i: \, X_{S^\lambda_i}< X_{\phi_{j-1}}\, \right\rbrace $ for $j>1$.

These times are a mixture of the recurrence times of the intensity process and ladder times of the surplus process, i.e. ladder times of the random process $\{ X_{S^\lambda_i}\}_{i \geq 1}$. As we can see, these renewal times might be infinite under our original measure since $X_t \to +\infty$ $\mathbb{P}$-a.s.
But under our alternative measure $\mathbb{Q}^{(R)}$, the surplus process $\{X_t\}_{t\geq 0}$ tends to $-\infty$, and the intensity returns infinitely often to $\lambda$. Hence, these times are finite almost surely. Define now
\[\tilde{B}(x):= \mathbb{P}_{(u,\lambda)} \left[ \phi_1 < +\infty, u-X_{\phi_1} \leq x\right]\]
and
\[p(u,x):= \mathbb{P}_{(u,\lambda)}\left[\tau \leq \phi_1, \, \left\vert \phi_1 < +\infty, X_{\phi_1} =u-x \right.\right].\]
Then we have, by conditioning on the distribution of the surplus at time $\phi_1$, that \[ \psi(u,\lambda) = \int_0^u \psi(u-x,\lambda) (1-p(u,x)) \, \tilde{B}(\mathrm{d}x) + \mathbb{P}_{(u,\lambda)}\left[\tau\leq \phi_1, \tau < +\infty \right].\]
As already mentioned, the distribution $\tilde{B}$ is defective. To work with a proper distribution,
we multiply the equation by $e^{Ru}$ to obtain
\begin{equation}\label{renewalruin}
\psi(u,\lambda) e^{Ru}= \int_0^u \psi(u-x,\lambda)e^{R(u-x)} (1-p(u,x)) \, \tilde{B}(\mathrm{d}x) + e^{Ru}\mathbb{P}_{(u,\lambda)}\left[\tau\leq \phi_1, \tau < +\infty, \right]\end{equation}
where
\[B(\mathrm{d}x) := e^{Rx} \tilde{B}(\mathrm{d}x).\]
\begin{lem}\label{lem:mass_1}
    The distribution $B$ is a proper probability distribution.
    \end{lem}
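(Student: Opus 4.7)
The plan is to verify that $\int_0^\infty B(\mathrm{d}x) = \mathbb{E}_{(u,\lambda)}\!\left[e^{R(u - X_{\phi_1})}\mathbb{I}\{\phi_1 < \infty\}\right] = 1$ by a change-of-measure argument using the exponential martingale $M^{(R)}$ from Theorem \ref{localmartingale2}, evaluated at the stopping time $\phi_1$.

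The first step is to compute $M^{(R)}_{\phi_1}$ explicitly on the event $\{\phi_1 < \infty\}$. By Assumption \ref{assumption3}, $\theta(R) = 0$, so the martingale simplifies to
\[M^{(R)}_t = \exp\bigl(Ru + \alpha(R)\lambda - RX_t - \alpha(R)\lambda_t\bigr).\]
By the very definition of $\phi_1$ as a recurrence epoch of the intensity, we have $\lambda_{\phi_1} = \lambda$ on $\{\phi_1 < \infty\}$, and hence $M^{(R)}_{\phi_1} = e^{R(u - X_{\phi_1})}$. This identification is the whole reason the renewal points $\{\phi_i\}$ were built to coincide with intensity recurrence times: it kills the $\alpha(R)\lambda_t$ factor so that the martingale at $\phi_1$ reduces exactly to the exponential tilt appearing in the definition of $B$.

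The second step is the change of measure. Since $M^{(R)}$ is a true non-negative martingale with $\mathbb{E}_{(u,\lambda)}[M^{(R)}_0] = 1$, the optional sampling theorem applied at the bounded stopping time $\phi_1 \wedge n$ gives $\mathbb{E}_{(u,\lambda)}[M^{(R)}_{\phi_1 \wedge n}] = 1$, and for any $A \in \mathcal{F}_{\phi_1}$ with $A \subseteq \{\phi_1 \leq n\}$, the definition of $\mathbb{Q}^{(R)}$ on $\mathcal{F}_n$ extends consistently to $\mathcal{F}_{\phi_1}$ via
\[\mathbb{Q}^{(R)}(A) = \mathbb{E}_{(u,\lambda)}\!\left[\mathbb{I}_A\, M^{(R)}_{\phi_1}\right].\]
Letting $A = \{\phi_1 \leq n\}$ and passing to $n \to \infty$ (monotone convergence on the right, continuity of measure on the left, using $M^{(R)}_{\phi_1} \geq 0$) yields
\[\mathbb{Q}^{(R)}(\phi_1 < \infty) = \mathbb{E}_{(u,\lambda)}\!\left[\mathbb{I}\{\phi_1 < \infty\}\, M^{(R)}_{\phi_1}\right] = \mathbb{E}_{(u,\lambda)}\!\left[\mathbb{I}\{\phi_1 < \infty\}\, e^{R(u - X_{\phi_1})}\right].\]

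The third step is to argue that the left-hand side equals $1$. Under $\mathbb{Q}^{(R)}$, Lemma \ref{driftpostive} provides negative drift (ruin a.s.), and Lemma \ref{Qrparamneters} shows that $\{\lambda_t\}_{t \geq 0}$ remains positive Harris recurrent, so it returns to the level $\lambda$ infinitely often. Combining these two facts, the process $\{X_{S^\lambda_i}\}_{i \geq 1}$ drifts to $-\infty$ along an infinite sequence of $\lambda$-recurrence epochs, so some $S^\lambda_i$ must eventually satisfy $X_{S^\lambda_i} < u$; that is, $\phi_1 < \infty$ $\mathbb{Q}^{(R)}$-a.s. Combining the displays then gives $\int_0^\infty e^{Rx}\,\tilde B(\mathrm{d}x) = 1$, i.e. $B$ is a probability distribution.

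The potentially delicate point is not the computation but the justification of the change-of-measure identity at the unbounded stopping time $\phi_1$; this is handled above by the truncation/monotone convergence argument, which works because $M^{(R)}_{\phi_1}$ is non-negative and we are only integrating over $\{\phi_1 < \infty\}$, so no uniform integrability of $M^{(R)}$ on $\{\phi_1 = \infty\}$ is needed.
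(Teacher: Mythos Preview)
Your proof is correct and follows essentially the same route as the paper: identify $M^{(R)}_{\phi_1}=e^{R(u-X_{\phi_1})}$ on $\{\phi_1<\infty\}$ using $\theta(R)=0$ and $\lambda_{\phi_1}=\lambda$, then recognize the resulting expectation as $\mathbb{Q}^{(R)}(\phi_1<\infty)=1$. The only difference is that you spell out the truncation/monotone convergence argument for extending the change of measure to the unbounded stopping time $\phi_1$, whereas the paper simply invokes the identity $\mathbb{E}_{(u,\lambda)}\bigl[e^{R(u-X_{\phi_1})}I_{\{\phi_1<\infty\}}\bigr]=\mathbb{Q}^{(R)}(\phi_1<\infty)$ directly and relies on the discussion preceding the lemma for the finiteness of $\phi_1$ under $\mathbb{Q}^{(R)}$.
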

    \begin{proof}
    By the definition of $\tilde{B}$, we have that
    \[
        \int_\mathbb{R}e^{Rx} \, \tilde{B}(\mathrm{d}x) = \mathbb{E}_{(u,\lambda)}\left[e^{R(u-X_{\phi_1})}I_{\left\lbrace \phi_1 < +\infty\right\rbrace}\right].
    \]
        Our martingale $\{ M^{(R)}_t\}_{t \geq 0}$ at time $\phi_1$ has the form \[ M^{(R)}_{\phi_1} = \exp(-R(X_{\phi_1} - u) - \alpha(R) (\lambda_{\phi_1}-\lambda)) = \exp(R(u-X_{\phi_1})).\] Therefore,
        \[  \int_\mathbb{R} \, B(\mathrm{d}x)=\int_\mathbb{R}e^{Rx} \, \tilde{B}(\mathrm{d}x) = \mathbb{Q}^{(R)}\left[ \phi_1 < +\infty \right]=1. \]
    \end{proof}
Observe that equation \eqref{renewalruin} is of the form of renewal equation
\eqref{renewalequation}, that is,
\begin{equation}\label{renewalequation2}
    Z(u)= \int_0^u Z(u-x) (1-p(u,x)) B(\mathrm{d}x) + z(u)
\end{equation}
for
\[Z(u):=\psi(u,\lambda) e^{Ru}\quad\text{and}\quad z(u):= e^{Ru}\mathbb{P}_{(u,\lambda)}\left[\tau\leq \phi_1, \tau < +\infty \right].\]
To show convergence of $Z(u)$, hence Cram\'er-Lundberg asymptotics, we have to verify that
$z(u)$ and $\int_0^u p(u,x) B(\mathrm{d}x)$ are directly Riemann integrable.
For the direct Riemann integrability of above-mentioned functions, we need to introduce an additional assumption. \begin{assumption}\label{ass:exponentially_rec_time}
    We assume there exists an $\varepsilon>0$ such that \[ \mathbb{E}_{(u,\lambda)}\left[ e^{-(1+\varepsilon)R (X_{\phi_1}-u)}I_{\left\lbrace \phi_1 < +\infty\right\rbrace}\right]< +\infty.\]
\end{assumption}
\begin{remark}\rm The random time $\phi_1$ depends on the behaviour of the bivariate process $\{ (X_t, \lambda_t) \}_{t \geq 0}$. Therefore, this assumption may be hard to check. An alternative to this is the condition \[ \mathbb{E}_{(u,\lambda)}\left[ e^{-(1+\varepsilon)R (X_{S^\lambda_1}-u)}I_{\left\lbrace S^\lambda_1 < +\infty\right\rbrace}\right]< +\infty,\] which is equivalent to Assumption \ref{ass:exponentially_rec_time} by Lemma 10 of \cite{Pojer.2022}.
Changing measure, we see that this assumption is equivalent to
\begin{equation}\label{suff0} \mathbb E^{\mathbb Q^{(1+\varepsilon)R}}\left[e^{\theta((1+\varepsilon)R) \phi_1}\right] < +\infty, \end{equation}
which shows the main influence of this assumption. By the structure of our renewal times, we cannot observe ruin exactly when it happens. Assumption \ref{ass:exponentially_rec_time} ensures that these renewal times happen often enough, such that there is one of these close enough to the time of ruin such that we do not miss the event that the surplus process is negative.
\end{remark}
Now we will show that the functions $z(u)$ and $\int_0^u p(u,x) B(\mathrm{d}x)$
are directly Riemann integrable. To do so, we will use Proposition V.4.1 on p. 154 of \cite{Asmussen.1995}, which gives us that it is sufficient to prove that both considered functions are continuous and there exists bounded directly Riemann integrable upper bounds for these functions.
\begin{lem}\label{lem:dri_z}
    Under Assumptions \ref{assumption1}-\ref{ass:exponentially_rec_time}, we have that \[z(u)=
    e^{Ru}\mathbb{P}_{(u,\lambda)}\left[\tau\leq \phi_1, \tau < +\infty \right]\] is directly Riemann integrable.
    \end{lem}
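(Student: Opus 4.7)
My plan is to verify the hypotheses of Proposition V.4.1 of \cite{Asmussen.1995}: continuity of $z$ together with a nonincreasing integrable majorant yields direct Riemann integrability. Since by Corollary \ref{corbounds} we already know $\psi(u)\leq C_+ e^{-Ru}$, the function $z$ is bounded, so it suffices to produce an exponentially decaying upper bound in $u$.

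I would first rewrite $z$ under the change of measure $\mathbb{Q}^{(R)}$ of Definition \ref{newmeasuredef}. Since $\{M^{(R)}_t\}_{t\geq 0}$ is a true martingale (Theorem \ref{localmartingale2}) and $\tau<\infty$ almost surely under $\mathbb{Q}^{(R)}$ by Lemma \ref{driftpostive}, an optional-sampling argument gives
\begin{equation*}
z(u) \;=\; \mathbb{E}^{\mathbb{Q}^{(R)}}_{(u,\lambda)}\!\left[\,I_{\{\tau\leq \phi_1\}}\, e^{RX_\tau + \alpha(R)(\lambda_\tau-\lambda)}\,\right].
\end{equation*}
On $\{\tau<\infty\}$ we have $X_\tau\leq 0$, so $e^{RX_\tau}\leq 1$; and since $\alpha(R)<0$ by Lemma \ref{lem:alpha_differentiable} and $\lambda_\tau\geq a$, the factor $e^{\alpha(R)(\lambda_\tau-\lambda)}$ is bounded above by the constant $K_\lambda:=e^{-\alpha(R)(\lambda-a)}$. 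Hence $z(u)\leq K_\lambda\,\mathbb{Q}^{(R)}[\tau\leq\phi_1]$.

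The heart of the argument is then to show that $\mathbb{Q}^{(R)}[\tau\leq\phi_1]$ decays exponentially in $u$. On $\{\tau\leq\phi_1\}$ the infimum $\inf_{0\leq t\leq \phi_1}X_t$ is nonpositive; since this infimum is attained at some jump time $\tilde\tau\leq \phi_1$ and between $\tilde\tau$ and $\phi_1$ only the positive drift $c$ (minus further downward jumps) acts on $X$, one has the pathwise bound
\begin{equation*}
u \;\leq\; u-\inf_{0\leq t\leq\phi_1}X_t \;\leq\; (u-X_{\phi_1})+c\phi_1.
\end{equation*}
Applying an exponential Chebyshev inequality followed by H\"older with conjugate exponents $p,q$ yields
\begin{equation*}
\mathbb{Q}^{(R)}[\tau\leq\phi_1] \;\leq\; e^{-\varepsilon R u}\bigl(\mathbb{E}^{\mathbb{Q}^{(R)}}[e^{p\varepsilon R(u-X_{\phi_1})}]\bigr)^{1/p}\bigl(\mathbb{E}^{\mathbb{Q}^{(R)}}[e^{q\varepsilon R c\phi_1}]\bigr)^{1/q}.
\end{equation*}
For $\varepsilon$ small enough the first expectation is finite by Assumption \ref{ass:exponentially_rec_time} (which, using $\lambda_{\phi_1}=\lambda$, translates under the change of measure into exactly $\mathbb{E}^{\mathbb{Q}^{(R)}}[e^{\varepsilon R(u-X_{\phi_1})}]<\infty$), while the second is finite by light-tailedness of $\phi_1$ under $\mathbb{Q}^{(R)}$, obtained by combining Lemma \ref{Qrparamneters} with the argument of Theorem \ref{thm:light_tailed_recurrence} transferred to the new measure. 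Together these give $z(u)\leq K\, e^{-\varepsilon R u}$, an integrable nonincreasing dominant.

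Continuity of $z$ in $u$ follows from the absolute continuity of $F_U$ (Assumption \ref{assumption2}) via a standard coupling of the trajectories started at $u$ and $u+h$. The main obstacle is controlling the exponential moment of $\phi_1$ under $\mathbb{Q}^{(R)}$: since $\phi_1$ is a ladder epoch among the intensity-recurrence times $\{S^\lambda_i\}$ rather than the first recurrence itself, Theorem \ref{thm:light_tailed_recurrence} has to be combined with a geometric-sum argument exploiting the negative drift $-\theta'(R)<0$ of $X$ under $\mathbb{Q}^{(R)}$ (Lemma \ref{driftpostive}), in order to promote exponential moments of individual recurrences to an exponential moment of $\phi_1$.
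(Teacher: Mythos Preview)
Your strategy is correct and can be completed, but it takes a substantially longer detour than the paper's proof, and the extra work you flag as ``the main obstacle'' is genuinely needed on your route but entirely avoided on the paper's.

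The decisive difference is which tilted measure you use. You change measure at $r=R$, so $\theta(R)=0$ and the time variable disappears; the exponential decay in $u$ must then be recovered from $\mathbb{Q}^{(R)}[\tau\leq\phi_1]$ via your pathwise bound $u\leq(u-X_{\phi_1})+c\phi_1$ and H\"older. This leaves you having to show $\mathbb{E}^{\mathbb{Q}^{(R)}}[e^{\eta\phi_1}]<\infty$ for some $\eta>0$, which---as you say---requires (i) transferring Theorem~\ref{thm:light_tailed_recurrence} to the new measure via Lemma~\ref{Qrparamneters} to get exponential moments of the individual recurrence gaps $S^\lambda_i-S^\lambda_{i-1}$, and then (ii) a ladder-epoch/geometric-tail argument (exponential tails of the first descent index of the random walk $\{X_{S^\lambda_i}-u\}$ under $\mathbb{Q}^{(R)}$, plus a H\"older estimate on $\sum_n\mathbb{E}[e^{\eta S^\lambda_n}I_{\{N=n\}}]$) to pass to $\phi_1$. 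This can be done, but it is real additional work not supplied by the stated assumptions.

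The paper instead changes measure at $r=(1+\varepsilon)R$. Then $\theta(r)>0$, and on $\{\tau\leq\phi_1\}$ one simply uses $e^{\theta(r)\tau}\leq e^{\theta(r)\phi_1}$; after discarding $e^{rX_\tau}\leq 1$ and $e^{\alpha(r)\lambda_\tau}\leq 1$, changing back to $\mathbb{P}$ via $M^{(r)}_{\phi_1}=e^{-r(X_{\phi_1}-u)-\theta(r)\phi_1}$ (recall $\lambda_{\phi_1}=\lambda$) collapses the bound to
\[
e^{(1+\varepsilon)Ru}\,\mathbb{P}_{(u,\lambda)}[\tau\leq\phi_1,\tau<\infty]\ \leq\ e^{-\alpha(r)\lambda}\,\mathbb{E}_{(u,\lambda)}\!\bigl[e^{-(1+\varepsilon)R(X_{\phi_1}-u)}I_{\{\phi_1<\infty\}}\bigr],
\]
which is precisely Assumption~\ref{ass:exponentially_rec_time}. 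Hence $z(u)\leq Ke^{-\varepsilon R u}$ with no separate control of $\phi_1$ at all. For continuity the paper argues via continuity of $u\mapsto\psi(u,\lambda)$, obtained from a one-step Markov decomposition of the ruin probability, rather than by a coupling.
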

    \begin{proof}
    We start the proof from showing that the function $z(u)$ is continuous. Indeed, from \eqref{renewalequation2} it follows that it suffices to show continuity of the ruin probability $\psi(u,\lambda)$ as a function of $u>0$.
For $h>0$, by Markov property of $\{(X_t,\lambda_t,t)\}_{t\geq 0}$, we have,
\begin{align}\label{basicid}
\psi(u,\lambda)&=\mathbb{P}(N_h=0) \psi(u+ch, \lambda)
\\&\quad+\sum_{k=1}^\infty \mathbb{P}(N_h=k) \mathbb{E}\psi(u+c(T_1+\ldots+T_k)-U_1-\ldots -U_k).\nonumber
\end{align}
Observe that $\lim_{h\to 0} \mathbb{P}(N_h=k)=0$ for $k\in \mathbb{N}$ and $\lim_{h\to 0}\mathbb{P}(N_h=0)=1$; see e.g.
\cite{Hawkes}.
Hence, by Lemma \ref{upperboundgen} and the dominated convergence theorem, we can conclude that
$\lim_{h\to 0}\sum_{k=1}^\infty \mathbb{P}(N_h=k) \mathbb{E}\psi(u+c(T_1+\ldots+T_k)-U_1-\ldots -U_k)=0$
and that $\psi(u,\lambda$ is right-continuous.
Plugging on the left-hand side of \eqref{basicid}, $u-ch$ instead of $u$ into the argument of $\psi$ gives the left-continuity of this function.

        Now, let $\varepsilon>0$ such that $\mathbb{E}_{(u,\lambda)}\left[ e^{-(1+\varepsilon)R (X_{\phi_1}-u)}I_{\left\lbrace \phi_1 < +\infty\right\rbrace}\right]< +\infty$. Let $r=(1+\varepsilon)R$. Observe that \begin{multline*}
            e^{ru} \mathbb{P}_{(u,\lambda)}\left[\tau\leq \phi_1, \tau < +\infty \right] =  e^{ru} \mathbb{E}_{(u,\lambda)}\left[I_{\left\lbrace \tau < \phi_1\right\rbrace} I_{\left\lbrace \tau < +\infty \right\rbrace}\right] \\=e^{ru} \mathbb{E}^{\mathbb{Q}^{(r)}}\left[I_{\left\lbrace \tau < \phi_1\right\rbrace} \exp\left(r(X_\tau-u)+\alpha(r) (\lambda_\tau - \lambda) + \theta(r) \tau \right)\right].
        \end{multline*}
        Since $r>0$ and $X_\tau <0$, we have that $\exp(rX_\tau) <1$ and the same holds for $\exp(\alpha(r) \lambda_\tau)$. Therefore, \begin{multline*}
            e^{ru} \mathbb{P}_{(u,\lambda)}\left[\tau\leq \phi_1, \tau < +\infty \right] \leq e^{-\alpha(r) \lambda}\mathbb{E}^{\mathbb{Q}^{(r)}}\left[I_{\left\lbrace \tau < \phi_1\right\rbrace} \exp\left( \theta(r) \phi_1 \right)\right] \\ \leq e^{-\alpha(r) \lambda}\mathbb{E}^{\mathbb{Q}^{(r)}}\left[I_{\left\lbrace  \phi_1< +\infty\right\rbrace} \exp\left( \theta(r) \phi_1 \right)\right] \\= e^{-\alpha(r) \lambda} \mathbb{E}_{(u,\lambda)}\left[ e^{-(1+\varepsilon)R (X_{\phi_1}-u)}I_{\left\lbrace \phi_1 < +\infty\right\rbrace}\right]< +\infty.
        \end{multline*}
        Thus, we have that there exists a positive constant $K$ such that
        \[  e^{ru} \mathbb{P}_{(u,\lambda)}\left[\tau\leq \phi_1, \tau < +\infty \right] \leq K e^{-(r-R)u}=Ke^{-\varepsilon u}\] and the upper bound is bounded nad directly Riemann integrable. This completes the proof.
    \end{proof}

\begin{lem}\label{lem:dri_integral}
    Under Assumptions \ref{assumption1}-\ref{ass:exponentially_rec_time}, the function which maps $u$ to $\int_0^u p(u,x)e^{Rx}B(\mathrm{d}x)$ is directly Riemann integrable.
    \end{lem}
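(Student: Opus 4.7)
The plan is to follow the two-step strategy of Lemma~\ref{lem:dri_z}: establish continuity of $u\mapsto I(u) := \int_0^u p(u,x) B(\mathrm{d}x)$ on $(0,\infty)$, then exhibit a bounded directly Riemann integrable upper bound, so that Proposition~V.4.1 on p.~154 of \cite{Asmussen.1995} applies.

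The first move is to recast $I(u)$ probabilistically. Because $\lambda_{\phi_1}=\lambda$ on $\{\phi_1<+\infty\}$ and $\theta(R)=0$, the martingale of Theorem~\ref{localmartingale1} satisfies $M^{(R)}_{\phi_1}=\exp(R(u-X_{\phi_1}))$ on that event, so unfolding the definitions of $\tilde{B}$ and $p(u,x)$ together with $B(\mathrm{d}x)=e^{Rx}\tilde{B}(\mathrm{d}x)$ gives
\[
I(u) = \mathbb{E}_{(u,\lambda)}\!\left[e^{R(u-X_{\phi_1})}\,I_{\{\tau\leq\phi_1<+\infty\}}\right] = \mathbb{Q}^{(R)}_{(u,\lambda)}\!\left[\tau\leq\phi_1\right],
\]
where the last equality uses Lemma~\ref{driftpostive} and Lemma~\ref{Qrparamneters} to ensure $\phi_1<+\infty$ $\mathbb{Q}^{(R)}$-almost surely.

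For the upper bound I would apply Hölder's inequality with conjugate exponents $1+\varepsilon$ and $(1+\varepsilon)/\varepsilon$, where $\varepsilon>0$ is the one supplied by Assumption~\ref{ass:exponentially_rec_time}, yielding
\[
I(u) \leq \mathbb{E}_{(u,\lambda)}\!\left[e^{(1+\varepsilon)R(u-X_{\phi_1})}\,I_{\{\phi_1<+\infty\}}\right]^{\tfrac{1}{1+\varepsilon}}\,\mathbb{P}_{(u,\lambda)}\!\left[\tau<+\infty\right]^{\tfrac{\varepsilon}{1+\varepsilon}}.
\]
By translation invariance the joint law of $(X_t-u,\lambda_t,\phi_1)_{t\geq 0}$ under $\mathbb{P}_{(u,\lambda)}$ does not depend on $u$, so the first factor is a finite constant by Assumption~\ref{ass:exponentially_rec_time}. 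For the second factor, I would derive the Lundberg-type bound $\psi(u,\lambda)\leq e^{-\alpha(R)\lambda}\,e^{-Ru}$ by optional stopping of the true martingale $\{M^{(R)}_t\}_{t\geq 0}$ of Theorem~\ref{localmartingale2} at $\tau\wedge t$ and an application of Fatou's lemma, using $X_\tau<0$, $\alpha(R)<0$ and $\theta(R)=0$. Combining the two factors gives $I(u)\leq K\,e^{-R\varepsilon u/(1+\varepsilon)}$, which is clearly bounded and directly Riemann integrable on $[0,\infty)$.

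For continuity I would use that under $\mathbb{Q}^{(R)}_{(u,\lambda)}$ the joint law of $(X_t-u,\lambda_t,\phi_1)_{t\geq 0}$ is independent of $u$, so with $Y_t:=X_t-u$ one has $I(u)=\mathbb{Q}^{(R)}[-\inf_{0\leq t\leq\phi_1}Y_t > u]$, the tail of a fixed non-negative random variable. Absolute continuity of $F_U$ (Assumption~\ref{assumption2}) is preserved by the Radon--Nikodym factor in Lemma~\ref{newgenerator}, so conditional on at least one claim occurring in $[0,\phi_1]$ the drawdown is an affine functional of continuously distributed random variables and carries no atom on $(0,\infty)$; thus its tail is continuous at every $u>0$. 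A direct Markov-property plus dominated convergence argument in the spirit of \eqref{basicid} would work equally well. The main obstacle is that neither $B$ nor $p(u,x)$ carries any smallness in $u$ on its own; the decisive manoeuvre is to transport $I(u)$ to the measure $\mathbb{Q}^{(R)}$ and then split via Hölder into a factor controlled by Assumption~\ref{ass:exponentially_rec_time} (uniformly in $u$) and a ruin-probability factor to which the Lundberg bound delivers the required exponential decay.
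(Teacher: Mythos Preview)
Your argument is correct, but it takes a considerably longer detour than the paper does. The paper's proof is a one-line reduction to Lemma~\ref{lem:dri_z}: since $x\in[0,u]$ in the integral, $e^{Rx}\leq e^{Ru}$, so
\[
\int_0^u p(u,x)\,B(\mathrm{d}x)=\int_0^u p(u,x)\,e^{Rx}\,\tilde B(\mathrm{d}x)\leq e^{Ru}\int_0^u p(u,x)\,\tilde B(\mathrm{d}x)\leq e^{Ru}\,\mathbb{P}_{(u,\lambda)}[\tau\leq\phi_1,\ \tau<+\infty]=z(u),
\]
and $z(u)$ has already been shown to be bounded and directly Riemann integrable. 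No H\"older, no Lundberg bound, no measure change are needed; the entire content of Assumption~\ref{ass:exponentially_rec_time} has already been spent in Lemma~\ref{lem:dri_z} and is simply reused here. Your route---change of measure, H\"older with exponents $1+\varepsilon$ and $(1+\varepsilon)/\varepsilon$, translation invariance, and an ad hoc Lundberg bound from optional stopping---yields the same exponential majorant $Ke^{-R\varepsilon u/(1+\varepsilon)}$, so it works, but it re-derives from scratch machinery that the previous lemma already packaged.

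One small imprecision: your identification $I(u)=\mathbb{Q}^{(R)}_{(u,\lambda)}[\tau\leq\phi_1]$ corresponds to $\int_0^\infty p(u,x)\,B(\mathrm{d}x)$, not $\int_0^u$; the restriction to $[0,u]$ adds the constraint $X_{\phi_1}\geq 0$. This does not affect your upper bound (the truncated integral is smaller), but it does mean your continuity argument via the tail of $-\inf_{t\leq\phi_1}Y_t$ is for the wrong quantity. The paper, for its part, merely asserts continuity without further detail, so neither proof is fully explicit on this point.
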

    \begin{proof}
    Observe that the function $u\rightarrow \int_0^u p(u,x)e^{Rx}B(\mathrm{d}x)$ is continuous.
To identify a bounded directly Riemann integrable upper bound, we choose arbitrary but fixed $u$. Then,
\begin{multline*}
            \int_0^u p(u,x) e^{Rx} B(\mathrm{d}x) \leq e^{Ru}\int_0^u p(u,x) B(\mathrm{d}x) = e^{Ru} \mathbb{P}_{(u,\lambda)}\left[\tau\leq \phi_1, \phi_1<+\infty\right] \\\leq e^{Ru}\mathbb{P}_{(u,\lambda)}\left[\tau\leq \phi_1, \tau < +\infty \right].
        \end{multline*}
        By Lemma \ref{lem:dri_z}, we know that the upper bound is directly Riemann integrable and bounded, which completes the proof.
      \end{proof}
We are now ready to prove our next main result.
\begin{thm}\label{cramerasymptotics}
    Under Assumptions \ref{assumption1}-\ref{ass:exponentially_rec_time}, there exists a constant $C^\lambda>0$, depending on $\lambda$, such that \[ \lim_{u \to \infty} \psi(u,\lambda)e^{Ru} = C^\lambda,\]
    where the adjustment coefficient $R>0$ solves Lundberg equation \eqref{adjustment2}.
    \end{thm}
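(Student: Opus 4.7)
The proof is essentially the last step in the outline at the beginning of Section \ref{sec:Markov}: every ingredient needed to invoke a renewal argument has already been assembled. The plan is to apply Theorem 2 of \cite{Schmidli.1997} to the generalized renewal equation \eqref{renewalequation2}
\[Z(u)= \int_0^u Z(u-x) (1-p(u,x)) B(\mathrm{d}x) + z(u),\]
with $Z(u) = \psi(u,\lambda)e^{Ru}$ and $z(u) = e^{Ru}\mathbb{P}_{(u,\lambda)}[\tau \leq \phi_1, \tau<+\infty]$. I would enumerate the hypotheses of that theorem and check each of them against what has already been established: $B$ is a proper probability distribution by Lemma \ref{lem:mass_1}; the kernel weight $p(u,x)$ lies in $[0,1]$ by construction; the function $z(u)$ is directly Riemann integrable by Lemma \ref{lem:dri_z}; and $u\mapsto \int_0^u p(u,x)B(\mathrm{d}x)$ is directly Riemann integrable by Lemma \ref{lem:dri_integral}. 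Continuity of $p(u,x)$ in $u$ follows from the continuity of $\psi(u,\lambda)$ in $u$ established within the proof of Lemma \ref{lem:dri_z} together with the bounded convergence argument on the one-step decomposition at $\phi_1$.

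Once the hypotheses are verified, Schmidli's theorem yields the existence of a finite limit $Z(u)\to C^\lambda$ as $u\to\infty$, which by definition of $Z$ is exactly \(\psi(u,\lambda)e^{Ru}\to C^\lambda\). To conclude the statement I still need strict positivity of $C^\lambda$. For this I would appeal to the explicit representation of the limit supplied by the renewal theorem (essentially $C^\lambda = \mu_B^{-1}\int_0^\infty z(u)\,\mathrm{d}u$ modulated by the factor $1-p$, as in \cite{Schmidli.1997}): the mean $\mu_B = \mathbb{E}^{\mathbb Q^{(R)}}_{(u,\lambda)}[R(u-X_{\phi_1})]$ is finite by Assumption \ref{ass:exponentially_rec_time}, and $z$ is strictly positive on a set of positive Lebesgue measure because for sufficiently small $u$ one has $\mathbb{P}_{(u,\lambda)}[\tau\leq \phi_1,\tau<+\infty]>0$ (ruin before the first intensity recurrence has positive probability whenever a single large claim can drive the surplus below zero before $\phi_1$).

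The main obstacle I anticipate is two-fold. First, Schmidli's theorem requires the tilted ladder-height distribution $B$ to be non-arithmetic. Since $B$ inherits its support from the joint law of $(X_{\phi_1},\lambda_{\phi_1})$ under $\mathbb Q^{(R)}$ and the jump-size distribution $F_U$ is absolutely continuous by Assumption \ref{assumption2}, the law of $u-X_{\phi_1}$ has an absolutely continuous component, so $B$ is non-arithmetic; this reduction needs to be spelled out carefully. Second, one has to be sure that Schmidli's framework truly covers our equation, in particular that the $u$-dependence of the kernel $(1-p(u,x))$ is of the admissible type, which is why the separate direct Riemann integrability of $\int_0^u p(u,x)\,B(\mathrm{d}x)$ was verified in Lemma \ref{lem:dri_integral}. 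Once these two points are in hand, the conclusion $\psi(u,\lambda)e^{Ru}\to C^\lambda>0$ follows immediately, with $R$ being the positive root of the Lundberg equation $\theta(R)=0$ from Assumption \ref{assumption3}.
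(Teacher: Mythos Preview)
Your proposal is correct and follows essentially the same route as the paper: verify the hypotheses of Theorem~2 of \cite{Schmidli.1997} for the renewal equation \eqref{renewalequation2} using Lemmas \ref{lem:mass_1}--\ref{lem:dri_integral}, and conclude. The paper's own proof is extremely brief and does exactly this.

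Two minor differences are worth noting. First, for the continuity of $p(u,x)$ in $u$ the paper does not derive it from the continuity of $\psi$ established inside Lemma~\ref{lem:dri_z}; instead it invokes the absolute continuity of $F_U$ (Assumption~\ref{assumption2}) together with Lemma~12 of \cite{Pojer.2022}. Your sketch (``bounded convergence on the one-step decomposition at $\phi_1$'') is plausible but would need more detail to be airtight, since $p(u,x)$ is a \emph{conditional} probability given $X_{\phi_1}=u-x$, not a function of $\psi$ directly. Second, the paper does not spell out either the non-arithmeticity of $B$ or the strict positivity of $C^\lambda$; your discussion of both points is more careful than the paper's and is a welcome addition (positivity can alternatively be read off from the lower bound in Corollary~\ref{corbounds}).
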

    \begin{proof}
        By the absolute continuity of the claim events and the proof of Lemma 12 of \cite{Pojer.2022}, we have that $p(u,x)$ is continuous in $u$. By this and Lemmas \ref{lem:mass_1}-\ref{lem:dri_integral}, all assumptions of Theorem 2 of \cite{Schmidli.1997} are satisfied, which gives the statement of our theorem. 
    \end{proof}
\begin{remark}\rm
By Corollary \ref{corbounds}, the adjustment coefficient $R>0$ defined via \eqref{adjustment2} equals to the adjustment coefficient
of 'shifted' Cram\'er-Lunberg risk process defined in \eqref{adjustmentcoeff}.
\end{remark}
\begin{remark}
Theorem \ref{cramerasymptotics} gives a stronger statement than Theorem 4.1 of \cite{Stabile} who derived
only the logarithmic asymptotic showing that $\lim_{u\to+\infty}\frac{1}{u}\ln \psi(u)=R$.
\end{remark}

\section{ Markovian Hawkes arrival process with exponentially distributed shocks and exponential claims}\label{sec:example}
Here, we introduce an example where all Assumptions \ref{assumption1}-\ref{ass:exponentially_rec_time} are satisfied. For this, we consider a Markovian Hawkes process with the intensity process \eqref{lambdamarkov} and with exponentially distributed shocks \[Y_i \sim {\rm Exp}(\gamma).\]
To ensure that the integrability condition $\frac{\mathbb{E}\left[Y\right]}{\beta} <1$ given in \eqref{dodzal}
is satisfied, we assume that
\[\beta \gamma >1.\]
\subsubsection*{The stationary distribution}
We start from the following fact which is of own interest.
\begin{thm}
The stationary measure $\nu$ of the intensity process $\{\lambda_t\}_{t\geq 0}$ exists
 and it is shifted Gamma law, that is,
\begin{equation}\label{gammalaw} \nu \sim a+ {\rm Gamma} (a/\beta, (\beta \gamma -1)/\beta).\end{equation}
\end{thm}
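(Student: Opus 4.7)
\medskip

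\textbf{Proof plan.} Existence of a stationary probability measure is already in hand: by Theorem \ref{thm:posrecc} the intensity $\{\lambda_t\}_{t\geq 0}$ is positive Harris recurrent, which gives a unique invariant probability measure $\nu$. So the task reduces to identifying $\nu$. My strategy is to compute the Laplace transform $\varphi(s):=\int_a^\infty e^{-s\lambda}\,\nu(\mathrm{d}\lambda)$ for $s\ge 0$ by exploiting the stationarity identity $\int \mathcal{A}^\lambda f\,\mathrm{d}\nu=0$ with test function $f_s(\lambda)=e^{-s\lambda}$, and then recognising the resulting formula as the Laplace transform of a shifted Gamma.

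With $Y\sim\mathrm{Exp}(\gamma)$ the jump part of the generator acting on $f_s$ collapses neatly, since $\int_0^\infty(e^{-s(\lambda+y)}-e^{-s\lambda})\gamma e^{-\gamma y}\,\mathrm{d}y=-\frac{s}{\gamma+s}e^{-s\lambda}$. Combining with the drift, the generator simplifies to
\[
\mathcal{A}^\lambda f_s(\lambda)=e^{-s\lambda}\left(-s\beta a+\lambda\cdot\frac{s(\beta\gamma-1+\beta s)}{\gamma+s}\right).
\]
Integrating against $\nu$, using $\int \lambda e^{-s\lambda}\,\nu(\mathrm{d}\lambda)=-\varphi'(s)$, and dividing by $s>0$ yields the first-order linear ODE
\[
\frac{\varphi'(s)}{\varphi(s)}=-\frac{\beta a(\gamma+s)}{\beta\gamma-1+\beta s}=-a-\frac{a}{\beta\gamma-1+\beta s},
\]
where the last step is a one-line polynomial division. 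Integrating with the normalisation $\varphi(0)=1$ gives
\[
\varphi(s)=e^{-as}\left(\frac{(\beta\gamma-1)/\beta}{(\beta\gamma-1)/\beta+s}\right)^{a/\beta},
\]
which is exactly the Laplace transform of $a+Z$ with $Z\sim\mathrm{Gamma}(a/\beta,(\beta\gamma-1)/\beta)$ in the shape--rate convention. Since the Laplace transform on $[0,\infty)$ determines a distribution on $[a,\infty)$, this proves \eqref{gammalaw}.

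The main technicality to watch is justifying the use of $f_s$ in the stationarity equation: $f_s$ is bounded, smooth in $\lambda$, and the expression $\int_0^\infty |f_s(\lambda+y)-f_s(\lambda)|\gamma e^{-\gamma y}\,\mathrm{d}y$ is uniformly bounded in $\lambda$, so $f_s$ lies in a suitable core for the extended generator; the identity $\int\mathcal{A}^\lambda f_s\,\mathrm{d}\nu=0$ then follows from stationarity combined with Fubini once one knows $\nu$ has a finite first moment, and the latter is implicit in Lemma \ref{lem:mean_surplus} (which provides the finite asymptotic mean $\beta a/(\beta-\mathbb{E}[Y])=a\beta\gamma/(\beta\gamma-1)$, incidentally also matching the mean of our shifted Gamma and serving as a sanity check). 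If the test-function argument were felt to be delicate, an equivalent route is to verify by direct calculation that the density $p(\lambda)\propto(\lambda-a)^{a/\beta-1}e^{-(\beta\gamma-1)(\lambda-a)/\beta}$ on $(a,\infty)$ solves the formal adjoint equation $(\mathcal{A}^\lambda)^{*}p=0$, which after integration by parts in the drift term reduces to a first-order ODE in $p$ with exactly the same solution.
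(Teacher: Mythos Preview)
Your proof is correct and takes a genuinely different route from the paper's. The paper works on the density side: it computes the full formal adjoint $\mathcal{A}^{*}$ of the intensity generator, then differentiates the integro-differential equation $\mathcal{A}^{*}g=0$ and combines it with $\gamma\,\mathcal{A}^{*}g=0$ to eliminate the convolution term, producing a second-order linear ODE for $g$ whose integrable solution is the shifted Gamma density. You work on the transform side: plugging the one-parameter family $f_s(\lambda)=e^{-s\lambda}$ into the stationarity identity $\int\mathcal{A}^{\lambda}f_s\,\mathrm{d}\nu=0$ collapses the jump integral (thanks to the exponential shock law) and yields a first-order separable ODE for the Laplace transform $\varphi$, which you recognise directly. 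Your approach is shorter and avoids both the adjoint computation and the somewhat ad hoc elimination step, at the cost of being tailored to test functions for which the jump term is explicit; the paper's approach is heavier but in principle applies to more general shock distributions. As the paper itself remarks after its proof, the Laplace-transform identification is essentially the method of Dassios (2011), so you have independently rediscovered that argument. One minor point: for $s>0$ the function $\lambda\mapsto\lambda e^{-s\lambda}$ is bounded, so you do not actually need the finite first moment of $\nu$ to justify $\int\mathcal{A}^{\lambda}f_s\,\mathrm{d}\nu=0$ or the differentiation $\varphi'(s)=-\int\lambda e^{-s\lambda}\,\nu(\mathrm{d}\lambda)$; dominated convergence suffices. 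Your sanity check via the mean is nice but not load-bearing.
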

\begin{proof}
By Theorem 34.19 on p. 118 of \cite{Davis.1984} (see also Prop. 34.7, p. 113 and Prop. 34.11, p. 115 of  \cite{Davis.1984}) and the stationary distribution $\nu$ of the PDMP with density $p$ satisfies
\[ 0=\int_a^\infty \mathcal{A}f(x) \nu(\mathrm{d}x) = \int_a^\infty f(x) \mathcal{A}^*p(x) \, \mathrm{d}x,\] for all $f\in D(\mathcal{A})$ in the domain of the generator $\mathcal{A}$, where $\mathcal{A}^*$ is an adjoint operator to $\mathcal{A}$.
If we can find the unique solution to the equation
\begin{equation}\label{eqstationaryexp}
\mathcal{A}^*g(\lambda)=0,
\end{equation}
then, by the uniqueness of the stationary distribution, this solution must be a density of the stationary distribution.

We recall that
\begin{multline*}
    \mathcal{A}f(\lambda) = \beta (a-\lambda) f'(\lambda) + \lambda \int_0^\infty \gamma e^{-\gamma y} \left( f(\lambda +y) - f(\lambda) \right) \, \mathrm{d}y \\ = \beta (a-\lambda) f'(\lambda) + \lambda \int_\lambda^\infty \gamma e^{-\gamma (y-\lambda)}  f(y)   \, \mathrm{d}y -\lambda f(\lambda)
\end{multline*}
and the adjoint operator $\mathcal{A}^*$ satisfies
\[\int_a^\infty (\mathcal{A}f(\lambda)) g(\lambda) \, \mathrm{d}\lambda = \int_a^\infty (f(\lambda)) \mathcal{A}^*g(\lambda) \, \mathrm{d}\lambda,\] for all functions $f$ and $g$ from the domain of $\mathcal{A}$. Therefore, for $b>a$,
\begin{multline*}
    \int_a^b (\mathcal{A}f(\lambda)) g(\lambda) \, \mathrm{d}\lambda = \int_a^b \beta (a-\lambda) f'(\lambda) g(\lambda) \, \mathrm{d}\lambda \\ + \int_a^b \lambda g(\lambda) \int_\lambda^\infty \gamma e^{-\gamma (y-\lambda)} f(y) \, \mathrm{d}y \, \mathrm{d}\lambda- \int_a^b \lambda f(\lambda) g(\lambda) \, \mathrm{d}y\lambda.
\end{multline*}
Furthermore,. if we use integration by parts in the first integral, we get \[ \int_a^b \beta (a-\lambda) f'(\lambda) g(\lambda) = f(b) \beta (a-b) g(b) - \int_a^b f(\lambda) \left(\beta (a-\lambda) g'(\lambda) -\beta g(\lambda) \right) \, \mathrm{d}\lambda. \]
In the second term, we interchange integrals and obtain \begin{multline*}
    \int_a^b \lambda g(\lambda) \int_\lambda^\infty \gamma e^{-\gamma (y-\lambda)} f(y) \, \mathrm{d}y \, \mathrm{d}\lambda = \int_a^\infty f(y) \int_a^{\min (y,b)} \lambda g(\lambda) \gamma e^{-\gamma (y-\lambda)} \, \mathrm{d}\lambda \, \mathrm{d}y \\ = \int_a^b f(\lambda) \int_a^\lambda y g(y) \gamma e^{-\gamma (\lambda-y)} \, \mathrm{d}y \, \mathrm{d}\lambda + \int_b^\infty f(\lambda) \int_a^{b} y \,g(y)\, \gamma e^{-\gamma (\lambda-y)} \, \mathrm{d}y \, \mathrm{d}\lambda.
\end{multline*}
Plugging these together, we have that \begin{multline*}
    \int_a^b (\mathcal{A}f(\lambda)) g(\lambda) \, \mathrm{d}\lambda =  f(b) \beta (a-b) g(b) + \int_b^\infty f(\lambda) \int_a^{b} y \,g(y)\, \gamma e^{-\gamma (\lambda-y)} \, \mathrm{d}y \, \mathrm{d}\lambda \\ + \int_a^b f(\lambda) \left( \beta (a-\lambda)g'(\lambda) - \beta g(\lambda) + \int_a^\lambda y \,g(y)\,\gamma e^{-\gamma (\lambda-y)} \, \mathrm{d}y-\lambda \, g(\lambda) \right) \, \mathrm{d}\lambda.
\end{multline*}
 If we let $b$ tend to infinity, the first two terms vanish and we find that the adjoint operator is given by \[\mathcal{A}^* g(\lambda) = \beta (a-\lambda)g'(\lambda) - \beta g(\lambda) + \int_a^\lambda y \,g(y)\,\gamma e^{-\gamma (\lambda-y)} \, \mathrm{d}y-\lambda \, g(\lambda). \]

 To solve equation \eqref{eqstationaryexp}, observe that \[ \frac{\partial}{\partial \lambda}\mathcal{A}^* g(\lambda) = \beta (\lambda-a) g''(\lambda)+(2\beta -\lambda) g'(\lambda) + (\beta \gamma -1) g(\lambda) - \int_a ^\lambda \gamma ^2 e^{-\gamma (\lambda-y)} \, y \, g(y) \, \mathrm{d}y \]
 and the solution of \eqref{eqstationaryexp} satisfies
 \begin{multline*}
     0 = \gamma \mathcal{A}^* g(\lambda) + \frac{\partial}{\partial \lambda} \mathcal{A}^* g(\lambda) \\ = \beta (\lambda -a ) g''(\lambda) - (\lambda +\beta(-2+a\gamma - \gamma \lambda )) g'(\lambda) + (\beta \gamma -1) g(\lambda).
 \end{multline*}
This equation has solutions of the form
\begin{multline}\label{densityformexp}
    g(\lambda) = c_1 e^{-\frac{(\beta \gamma -1)}{\beta}\lambda} (\lambda-a)^{\frac{a}{\beta}-1} \\+ c_2 e^{-\frac{(\beta \gamma -1)}{\beta}\lambda} (\lambda-a)^{\frac{a}{\beta}-1} \Gamma\left(1- \frac{a}{\beta}, (\frac{1}{\beta}-\gamma) (\lambda-a)\right),
\end{multline}
where $\Gamma$ denotes the incomplete gamma function.
To get a proper distribution from the function $g$ we have to set $c_2 =0$.
Hence $g(\lambda)$ is the density of a gamma distribution with parameters $\frac{a}{\beta}$ and $\frac{(\beta \gamma -1)}{\beta}$ and support shifted by $a$.
This completes the proof.
\end{proof}
\begin{remark}
This coincides with Remark 4.3 of \cite{Dassios.2011}, where they derived the stationary distribution using the limit of the corresponding Laplace transformations.
\end{remark}
\subsubsection*{Assumptions \ref{assumption1}- \ref{assumption3} and the form of adjustment coefficient $R$}
We now consider the surplus process \[ X_t = u+ct - \sum_{i=1}^{N_t} U_i.\]
where the claims have an exponential distribution with parameter $\mu>0$, that is, \[U_i \sim {\rm Exp}(\mu).\] In this case, the net profit condition simplifies to
\[c > \frac{a\beta \gamma}{\mu(\beta \gamma -1)},\]
and the moment generating function $M_U(r) = \frac{\mu}{\mu-r}$ is well defined for all $r<\mu$ and satisfies $\lim_{r \to \mu} M_U(r) = +\infty$.
Since $Y_i \sim {\rm  Exp}(\gamma)$, we get $M_Y(-\alpha) = \frac{\gamma}{\gamma+\alpha}$, for $\alpha >-\gamma$ and
$\lim_{\alpha \to -\gamma} M_U(\alpha) = +\infty$. Hence Assumptions \ref{assumption1}- \ref{assumption2} are satisfied.

To verify that Assumption \ref{assumption3} is also true, observe that the equations for $\theta(r)$ and $\alpha(r)$ have the form \begin{align*}
\alpha^2 \beta + \alpha(\beta\gamma -1) + \frac{\mu \gamma}{\mu-r}-\gamma &= 0, \\
\theta = -cr - \alpha a \beta.
\end{align*}
We can solve the quadratic equation for $\alpha$ and obtain the following solutions \[ \alpha_{1,2} = \frac{1-\gamma \beta}{2\beta}\pm \frac{\sqrt{(-4r\beta\gamma + (-1+\beta \gamma )^2 (\mu-r))(\mu-r)}}{2\beta (\mu-r)}.\]
As we expect from the theory already derived, there are two distinct real solutions for $\alpha$ as long $r< r_{\rm max} = \frac{(\beta \gamma -1)^2}{(\beta \gamma +1)^2}\mu$, there is one single solution for $r=r_{\rm max}$ and no real solution if $r>r_{\rm max}$.

The larger solution is  \[\alpha(r) = \frac{1-\gamma \beta}{2\beta}+ \frac{\sqrt{(-4r\beta\gamma + (-1+\beta \gamma )^2 (\mu-r))(\mu-r)}}{2\beta (\mu-r)}.\] Hence, the function $\theta$ is given by \[\theta (r) = -cr + \frac{a (\beta \gamma -1)}{2} - \frac{a\sqrt{(-4r\beta\gamma + (-1+\beta \gamma )^2 (\mu-r))(\mu-r)} }{2(\mu-r)}.\]
Solving $\theta(r) =0$ to obtain the adjustment coefficient $R$ gives us three solutions. Namely, \begin{align*}
r_1 &=0 ,\\[3pt]
r_2 &= \frac{-a+a\beta \gamma + c\mu - \sqrt{(a(1+\beta \gamma))^2 -2ac(-1+\beta \gamma)\mu +c^2\mu^2}}{2c}, \\[3pt]
r_3 &= \frac{-a+a\beta \gamma + c\mu + \sqrt{(a(1+\beta \gamma))^2 -2ac(-1+\beta \gamma)\mu +c^2\mu^2}}{2c}.
\end{align*}
This seems surprising since, by Lemma \ref{lem:theta_convex}, we know that $\theta$ is convex; hence, we would expect two roots. To resolve this puzzle, we take a closer look at the third root $r_3$ and see that \begin{multline*}
r_3 = \frac{-a+a\beta \gamma + c\mu + \sqrt{(a(1+\beta \gamma))^2 -2ac(-1+\beta \gamma)\mu +c^2\mu^2}}{2c} =\\
\frac{-a+a\beta \gamma + c\mu +\sqrt{(a(-1+\beta \gamma)-c\mu)^2 +4a^2\beta \gamma }}{2c} \\ \geq \frac{-a+a\beta \gamma + c\mu +\left \vert (a(-1+\beta \gamma)-c\mu)\right\vert}{2c}\\ \geq \frac{-a+a\beta \gamma + c\mu + (-a(-1+\beta \gamma)+c\mu)}{2c} = \mu.
\end{multline*}
In the previous parts, $\theta$ was only defined in the interval $(-\infty,r_{\rm max})$. Since $\mu>r_{\rm max}= \frac{(\beta \gamma -1)^2 \mu}{(\beta \gamma+1)^2}$, we see that the third root is not in the domain under consideration.

To ensure that the second root
\[R=r_2= \frac{-a+a\beta \gamma + c\mu - \sqrt{(a(1+\beta \gamma))^2 -2ac(-1+\beta \gamma)\mu +c^2\mu^2}}{2c}\]
(which is our adjustment coefficient) is in the domain, we must assume
the additional condition
\begin{equation}\label{additionalassumptionexp}
c< \frac{a(\beta \gamma +1)^2}{2(\beta \gamma -1)\mu}.
\end{equation}
This requirement \eqref{additionalassumptionexp} corresponds exactly to (4.5) in \cite{Karabash.2015}, which was needed to show the convergence of the logarithm of the probability of ruin in the general Hawkes case.
Finally, note that there exists an $\varepsilon>0$ such that $M_U(R+\varepsilon)$, $M_Y(-\alpha(R+\varepsilon))$ are finite,
and hence Assumption \ref{assumption3} is satisfied.

\subsubsection*{Integrability condition of the recurrence times: Assumption \ref{ass:exponentially_rec_time}}
Finally, we have to check if Assumption \ref{ass:exponentially_rec_time} is satisfied.
To do so, by the description of stopping rules $\phi_i$ at the beginning of Subsection \ref{subsec:CL} and \eqref{suff0},
it suffices to show that, for fixed level $\lambda >a$, there exists some $r>R$ such that
\begin{equation}\label{suff}
\mathbb{E}_\lambda^{\mathbb{Q}^{(r)}}\left[e^{\theta(r) S^\lambda_1}\right]<+\infty.
\end{equation}

To prove \eqref{suff}, we will use the ideas of the proof of Theorem \ref{thm:light_tailed_recurrence}, that is, we identify some constant $q$ and $\tilde \lambda < \lambda$ such that
\begin{equation*}\label{warunki}
\mathbb{E}^{\mathbb{Q}^{(r)}}_\lambda \left[e^{q S_1^{\tilde \lambda}}\right] < +\infty\quad \text{and}\quad \mathbb{E}^{\mathbb{Q}^{(r)}}_{\tilde \lambda} \left[e^{q S_1^{ \lambda}}\right] < +\infty.
\end{equation*}
For this, using the proof of Theorem \ref{thm:light_tailed_recurrence}, we recall that, under the stationary regime, the recurrence time $S^\lambda_1$ is light-tailed and $\mathbb{E}_\nu^{\mathbb{Q}^{(r)}}\left[e^{q S^\lambda_1}\right]<+\infty$ for all $q < \beta (\lambda-a) p^{(r)}(\lambda)$, where $ p^{(r)}(\lambda)$ denotes the density of the stationary distribution under the measure $\mathbb{Q}^{(r)}$.

Due to the proof of Theorem \ref{thm:light_tailed_recurrence}, we have that for almost all $\tilde \lambda < \lambda$, that $\mathbb{E}_\lambda^{\mathbb{Q}^{(r)}}\left[e^{q S_1^\lambda}\right]< +\infty$, where $q < \min \left( \beta (\lambda-a) p^{(r)}(\lambda), \beta (\tilde \lambda-a) p^{(r)}(\tilde \lambda)\right)$. Unfortunately, we have to show that this holds for $q=\theta(r)$, a quantity depending on $r$. Furthermore, we know that the exponential moment is finite for almost all $\tilde \lambda$, but we do not know which $\tilde \lambda$ does not satisfy this property.
To bypass these problems, we aim to identify a lower bound $K$ for $\beta (\tilde \lambda-a) p^{(r)}(\tilde \lambda)$ which is independent of $r$ and holds uniformly for $\tilde \lambda \in I$, where $I$ is an interval containing $\lambda$. This would give us $\mathbb{E}_\lambda^{\mathbb{Q}^{(r)}}\left[e^{K S_1^\lambda}\right]< +\infty$, for all $r$, which would allow us to choose $r>R$ such that $\theta(r)< K$. Consequently, the necessary integrability condition \eqref{suff} will be satisfied.

From the proof of Lemma \ref{Qrparamneters}
it follows that,
under a measure $\mathbb{Q}^{(r)}$ for some arbitrary $r< r_{\rm max}$,
the intensity process $\{\lambda_t\}_{t\geq 0}$ of a Markovian Hawkes process $\{N_t\}_{t\geq 0}$ with the baseline intensity $a M_U(r)M_Y(-\alpha(r))$, decay parameter $\beta$ and shocks of the form
$M_U(r)M_Y(-\alpha(r)) \tilde Y$, where $\tilde Y$ has distribution
\[\frac{e^{-\alpha(r) y}}{M_Y(-\alpha(r))} F_Y(\mathrm{d}y) = \frac{1}{\gamma+\alpha(r)} e^{-(\gamma + \alpha(r))y} \, \mathrm{d}y,\]
that is, with
the shocks that are exponentially distributed with the parameter
\[\gamma^{(r)}:= \frac{\gamma + \alpha(r)}{M_U(r) M_Y(-\alpha(r))} = \frac{(\gamma+\alpha(r))^2 (\mu-r)}{\gamma \mu}. \]
Hence, we can use the already determined stationary distribution inn \eqref{densityformexp}
for Markovian Hawkes intensities with exponentially distributed shocks to conclude that our process $\{ \lambda_t\}_{t \geq 0}$ has stationary density
\begin{align*}
    &p^{(r)}(\lambda ) = (\lambda-a)^{\frac{M_U(r)M_Y(-\alpha(r)) a}{\beta}-1}\\&\quad \times \exp\left(- \frac{(\beta \gamma^{(r)} -1) M_U(r)M_Y(-\alpha(r))}{\beta} (\lambda-a) \right)
     \\ &\quad \times (M_U(r)M_Y(-\alpha(r)))^{\frac{M_U(r)M_Y(-\alpha(r)) a}{\beta}-1} \left(\frac{\beta \gamma^{(r)} -1}{\beta}\right)^{\frac{a M_U(r)M_Y(-\alpha(r))}{\beta}} \\&\quad\times \Gamma\left( \frac{M_U(r)M_Y(-\alpha(r))a}{\beta}\right)^{-1}.
\end{align*}
Choose now some $\varepsilon>0$ such that $r_{\rm max}>R+\varepsilon$. Then, we see that the function $p^{(r)}(\lambda )$ is well defined for all $r \in [R,R+\varepsilon]$, continuous as a bivariate function $p(r,\lambda) := p^{(r)}(\lambda)$, and strictly positive. Consequently, we find that this function is uniformly bounded from below on $[R,R+\varepsilon] \times [\frac{\lambda+a}{2},\lambda]$ by some positive constant $K$.

Recall that the function $\theta(r)$ is continuous and the adjustment condition $R$ satisfies $\theta(R)=0$. By this, we can choose some $r \in [R,R+\varepsilon]$ such that $\theta(r) < K \beta \left(\frac{\lambda-a}{2}\right)$. Therefore, it holds for this specific $r$ that  \[ \mathbb{E}^{\mathbb{Q}^{(\bar r)}}_{ \lambda} \left[ e^{\theta(\bar r) S^{ \lambda}_1} \right] < +\infty.\] Consequently, we have that all our assumptions are satisfied and, assuming \eqref{additionalassumptionexp}, from Theorem \ref{cramerasymptotics} we can conclude that
\[ \lim_{u\to+\infty}\psi(u, \tilde \lambda) e^{Ru} = C^{\tilde{\lambda}},\]
for a positive constant $C^{\tilde{\lambda}}$.


\section*{Acknowledgments}
This research was funded in whole, or in part, by the Austrian Science Fund (FWF) P 33317. For the purpose of open access, the author has applied a CC BY public copyright licence to any Author Accepted Manuscript version arising from this submission.

Z. Palmowski acknowledges that the research is partially supported by Polish National Science Centre Grant No. 2021/41/B/HS4/00599.

\small
\bibliographystyle{abbrv}
\bibliography{sn}

\end{document}